\documentclass[12pt]{article}
\usepackage[margin=1in]{geometry}
\usepackage[utf8]{inputenc}
\usepackage[T1]{fontenc}
\usepackage{amssymb}
\usepackage{amsmath}
\usepackage{amsthm}
\usepackage{amsfonts}
\usepackage{esint}
\usepackage{dsfont}
\usepackage{wasysym}
\usepackage{bbm}
\usepackage[all]{xy}
\usepackage{hyperref}
\usepackage{caption}
\usepackage{enumitem}
\usepackage{graphicx}
\usepackage{tikz-cd}
\usepackage{multirow}
\usepackage{array}
\usepackage{parskip}
\newcolumntype{P}[1]{>{\centering\arraybackslash}p{#1}}\newcolumntype{M}[1]{>{\centering\arraybackslash}m{#1}}

\bibliographystyle{amsplain}

\usetikzlibrary{decorations.markings,decorations.pathreplacing,positioning,cd}

\newtheorem{theorem}{Theorem}[section]
\newtheorem{corollary}[theorem]{Corollary}
\newtheorem{lemma}[theorem]{Lemma}
\newtheorem{proposition}[theorem]{Proposition}
\newtheorem{claim}{Claim}

\theoremstyle{definition}
\newtheorem{definition}[theorem]{Definition}
\newtheorem{assumption}[theorem]{Assumption}

\theoremstyle{remark}
\newtheorem*{remark}{Remark}

\newcommand{\R}{\mathbb{R}}
\newcommand{\Z}{\mathbb{Z}}

\newcommand{\Prob}{\mathbb{P}}
\newcommand{\Proba}[1]{ \Prob\left( #1 \right)}

\newcommand{\norm}[2]{\left\Vert #1 \right\Vert_{#2}}

\newcommand{\scal}[2]{\left<#1,#2\right>}

\newcommand{\excur}{\mathcal{E}_\ell}
\newcommand{\connects}{\overset{\excur(f)}{\longleftrightarrow}}
\newcommand{\chem}{\text{chem}}
\newcommand{\diam}{\text{diam}}
\newcommand{\dchem}{d_{\chem}^{\excur(f)}}

\providecommand{\keywords}[1]
{
  \textbf{\textit{Keywords---}} #1
}
\providecommand{\subjclass}[1]
{
  \textbf{\textit{Subject classification---}} #1
}

\title{Chemical distance for smooth Gaussian fields in higher dimension.}
\author{David Vernotte}	

\begin{document}

\maketitle

\begin{abstract}
Gaussian percolation can be seen as the generalization of standard Bernoulli percolation on $\mathbb{Z}^d$. Instead of a random discrete configuration on a lattice, we consider a continuous Gaussian field $f$ and we study the topological and geometric properties of the random excursion set $\mathcal{E}_\ell(f) := \{x\in \R^d\ |\ f(x)\geq -\ell\}$ where $\ell\in \mathbb{R}$ is called a level. It is known that for a wide variety of fields $f$, there exists a phase transition at some critical level $\ell_c$. When $\ell> \ell_c$, the excursion set $\mathcal{E}_\ell(f)$ presents a unique unbounded component while if $\ell<\ell_c$ there are only bounded components in $\excur(f)$.
In the supercritical regime, $\ell>\ell_c$, we study the geometry of the unbounded cluster. Inspired by the work of Peter Antal and Agoston Pisztora for the Bernoulli model \cite{Antal}, we introduce the chemical distance between two points $x$ and $y$ as the Euclidean length of the shortest path connecting these points and staying in $\excur(f)$. In this paper, we show that when $\ell>-\ell_c$ then with high probability, the chemical distance between two points has a behavior close to the Euclidean distance between those two points.
\end{abstract}
\keywords{probability, percolation, Gaussian field, chemical distance.}

\subjclass{60G15 Gaussian processes.}

\tableofcontents

\section{General introduction and notations}

This paper studies some geometric properties of excursion sets of a large class of random Gaussian fields in the Euclidean space of dimension $d\geq 2$. The continuous model of random Gaussian fields and the percolation model associated to it have gathered a lot of interest over the past few years (see for instance \cite{BG16},\cite{quasi_independance},\cite{Threshold},\cite{severo2022uniqueness},\cite{lcd3} for an incomplete list of references). We present the model as a continuous analogue of the classical discrete percolation (Bernoulli percolation). In the discrete case, say site percolation on $\mathbb{Z}^d$ for convenience, we consider a configuration $(\omega_i)_{i\in \mathbb{Z}^d}\in [0,1]^{\Z^d}$, where each $\omega_i$ is a uniform random variable over $[0,1]$ and all $\omega_i$ are mutually independent. Given a parameter $p\in [0,1]$ we are interested in the random set
\begin{equation}
    \mathcal{G}_p(\omega) := \{i\in \Z^d\ |\ \omega_i\leq p\}.
\end{equation}
The set $\mathcal{G}_p$ is a random set and its properties are well known. In particular, it is known that there is a phase transition at some $p_c(d)\in ]0,1[$. When $p<p_c(d)$ all clusters of $\mathcal{G}_p(\omega)$ are finite and therefore bounded (almost surely) while when $p>p_c$, $\mathcal{G}_p(\omega)$ presents a unique infinite cluster (meaning a cluster which contains an infinite number of vertices). Note that the configuration $\omega$ can be seen as a random function
$$\omega : \Z^d \to [0,1].$$ The continuous model replaces $\omega$ with a random continuous function
$$f : \R^d \to \R,$$
and for a level $\ell \in \mathbb{R}$ the analogue of $\mathcal{G}_p(\omega)$ is
\begin{equation}
\label{eq:excurf}
    \excur(f) := \{x\in \R^d\ |\ f(x)\geq -\ell\}.
\end{equation}
In the following, we properly define the model. We first describe how the random function $f$ is defined. We then give the main theorems related to the percolation model. Finally we introduce the chemical distance and state our main theorem.

In the rest of the paper, $f$ will denote a centered, stationary, continuous, Gaussian field on $\R^d$. That is, $f$ is a random collection $(f(x))_{x\in \R^d}\in \R^{\R^d}$ such that
\begin{itemize}
    \item for all $x_1,\dots,x_n\in \R^d$ the vector $(f(x_1),\dots,f(x_n))$ is a random Gaussian vector which is centered (Gaussian field);
    \item for all $x\in \R^d$, $f(x+\cdot)$ and $f(\cdot)$ have the same law (stationarity). (here and in the following $f(\cdot)$ stands for $y\mapsto f(y)$);
    \item almost surely $f : \R^d \to \R$ is continuous.
\end{itemize}
As an example, let us present the construction of such a field, the so-called Bargmann-Fock field. For this construction we work with a probability space where is defined $(a_{i_1,\dots,i_d})_{i_1,\dots,i_d\geq 0}$ a countable collection of mutually independent standard Gaussian random variables. For $x=(x_1,\dots,x_d)\in \R^d$ we set:
\begin{equation}
    f(x) := e^{-\frac{1}{2}\norm{x}{}^2}\sum_{i_1,\dots i_d\geq 0}a_{i_1,\dots,i_d}\frac{x_1^{i_1}\dots x_d^{i_d}}{\sqrt{i_1!\dots i_d!}}.
\end{equation}
Here and in the following, $\norm{\cdot}{}$ denotes the usual Euclidean norm.
A simple computation shows that almost surely, $f$ is well defined on $\R^d$ and by dominated convergence it is almost surely continuous (in fact $\mathcal{C}^\infty$). Moreover $f$ is clearly a Gaussian field which is centered and $\mathbb{E}[f(x)f(y)]=e^{-\frac{1}{2}\norm{x-y}{}^2}$ depends only on $x-y$ implying that $f$ is stationary. Moreover, since $\mathbb{E}[f(x)f(y)]$ only depends on $\norm{x-y}{}$, we see that the law of the field $f$ is actually isotropic. It is good to have in mind that our study applies for this specific field. Now that we introduced one example, we present a more general way to construct a variety of centered, stationary, continuous Gaussian fields. This is done via the white noise representation.
\begin{definition}
    A white noise $W$ on $\R^d$ is a centered Gaussian field indexed by functions of $L^2(\R^d)$ such that for any $\varphi_1,\varphi_2\in L^2(\R^d)$ we have
    $$\mathbb{E}[W(\varphi_1)W(\varphi_2)]= \int_{\R^d}\varphi_1(x)\varphi_2(x)dx.$$
\end{definition}
In the following, we will work on a probability space where a white noise $W$ is defined. To see an explicit construction of a white noise one may refer to \cite{janson}.
Consider a function $q : \R^d \to \R$ satisfying the following Assumption \ref{a:a1} for some $\beta>d$.
\begin{assumption}
\label{a:a1}
Let $\beta>d$. We say that $q$ satisfy Assumption \ref{a:a1} for $\beta$ if
\begin{itemize}
    \item $q$ is $\mathcal{C}^{10}$ and $\partial^\alpha q\in L^2(\R^d)$ for all $\alpha\in \mathbb{N}^d$ such that $\norm{\alpha}{1}\leq 10$ (regularity).
    \item $\max(|q(x)|,\norm{\nabla q(x)}{})=O(\norm{x}{}^{-\beta})$ (decay of correlation).
    \item $q$ is isotropic ($q(x)$ only depends on $\norm{x}{}$) (symmetries).
    \item $q\geq 0$ (strong positivity).
    \item $q$ is not identically equal to the zero function (non trivial).
\end{itemize}
\end{assumption}
Given a function $q$ which satisfies Assumption \ref{a:a1} for some $\beta>d$ one can define
$$f:= q \ast W,$$
where $\ast$ denotes convolution. That is, for all $x\in \R^d$ we set $f(x):=W(q(x-\cdot)),$ where we recall that $q(x-\cdot)$ is the function $y \mapsto q(x-y)$ which is in fact in $L^2(\R^d)$. It follows that $f$ is a centered Gaussian field. Its covariance kernel can be computed: $\mathbb{E}[f(x)f(y)]=(q \ast q)(x-y)$. This shows that the field $f$ is stationary. Moreover, by dominated convergence we can instead consider a continuous (in fact $\mathcal{C}^4$) modification of $f$, which we do in the rest of the paper. The white noise representation of the field is useful to define approximations of the field which have good properties (like finite range dependence, see Definitions \ref{def:fR} and  \ref{def:discretization}).

 We now briefly comment on Assumption \ref{a:a1}. The fact that we ask $q\in \mathcal{C}^{10}$ and the fact that the partial derivatives of $q$ are in $L^2$ is to guarantee that the field $f=q\ast W$ is not only continuous but at least $\mathcal{C}^4$. The assumption on the decay of $q(x)$ and $\nabla q(x)$ will give us certain decay of correlations of the random field. We require $q$ to be isotropic so that the law of the field presents useful symmetries (like rotation invariance for instance). The assumption $q\geq 0$ is often referred to as the \text{strong positivity} hypothesis, it implies in particular that the field $f$ has the FKG inequality. We comment that we do not believe these assumptions to be optimal, it might be possible to follow our argument with weaker assumptions. Note however that these assumptions are pretty classical in the context of Gaussian percolation (see for instance \cite{BG16}, \cite{severo2022uniqueness}, \cite{lcd3}).
\begin{remark}
If we set $q(x) = \left(\frac{2}{\pi}\right)^{d/4}e^{-\norm{x}{}^2}$ then $f=q \ast W$ obtained has the law of the Bargmann-Fock field previously introduced. Thus, note that the Bargmann-Fock field satisfies Assumption \ref{a:a1} for all $\beta > d$.
\end{remark}
\begin{remark}
 One may wonder what assumptions are needed for a centered, stationary, continuous Gaussian field to admit a white noise decomposition. Given such a field $f$, denote by $\kappa$ its covariance kernel, that is $\forall x\in \R^d, \ \kappa(x)=\mathbb{E}[f(x)f(0)].$ Since $f$ is continuous, Bochner's theorem allows to define the \textit{spectral measure} $\mu$ on $\R^d$ as the inverse Fourier transform of $\kappa$, meaning that
 $$\forall x\in \R^d,\ \kappa(x) = \int_{\R^d}e^{2i\pi \scal{x}{s}}\mu(ds).$$
 A sufficient condition for the white noise decomposition to hold is that $\mu$ has a density (called the \textit{spectral density} and denoted by $\rho^2$) with respect to the Lebesgue measure. This condition implies in particular that the law of $f$ is ergodic with respect to the flow of translations on $\R^d$, and this condition is implied by the fact that $x\mapsto \kappa(x)$ is integrable (this holds as soon as we have fast enough decay for $\kappa(x)$).
 We now argue that the existence of the spectral density implies the existence of a white noise decomposition of the field $f$. In fact, since $\int_{\R^d}\rho(x)^2dx=\kappa(0)<\infty$ the function $\rho$ belongs to $L^2(\R^d)$. We can therefore define $q$ as the Fourier transform of $\rho$ (it is again a function in $L^2(\R^d)$). Then the field $g=q \ast W$ has covariance kernel given by $q \ast q$, which is the Fourier transform of $\rho^2$, the spectral density. Since $\kappa$ was the Fourier transform of the spectral measure, we conclude that $f$ and $g$ have the same covariance kernel and the same law.
\end{remark}

We now introduce the percolation model associated to the continuous setting. Given a real parameter $\ell \in \R$ which we call a \textit{level}, we define the \textit{excursion set at level $\ell$} as
\begin{equation}
\label{eq:excurf}
    \excur(f) := \{x\in \R^d\ |\ f(x)\geq -\ell\}.
\end{equation}
We remark that since $f$ is random, the set $\excur(f)$ is also a random set. It appears that for a fixed realization of $f$, the collection $(\excur(f))_{\ell\in \R}$ is a non-decreasing sequence for set-inclusion as $\ell$ increases. Thus, the probability that $\excur(f)$ contains an unbounded component is non-decreasing in $\ell$ and we define the critical probability for percolation as
\begin{equation}
    \label{eq:def_lc}
    \ell_c := \sup\left\{\ell\in \mathbb{R}\ |\ \Proba{\excur(f)\text{ has no unbounded connected component}}=1\right\}.
\end{equation}
We remark that while we ask for an infinite cluster in the context of Bernoulli percolation, this would not make much sense in the case of continuous Gaussian percolation. In fact, for fixed $\ell\in \R$ then, almost surely, all connected components of $\mathcal{E}_\ell(f)$ will have a non empty interior and will contain an infinite number of points. Hence it is more interesting to know whether the connected components of $\mathcal{E}_\ell(f)$ are bounded or unbounded.
It turns out that when $d\geq 2$, then for a large class of Gaussian field we have $\ell_c\in ]-\infty,\infty[$, that is, there exists a non trivial phase transition. It is important to have in mind that the critical level $\ell_c$ is to be compared with the critical probability $p_c(d)$ in the Bernoulli setting on $\Z^d$.
For our continuous model, a lot is known about the case $d=2$. In particular the following theorem can be thought of as an analogue of the Theorem of Kesten \cite{Kesten} concerning the value of the critical probability of Bernoulli bond percolation on $\Z^2$.
\begin{theorem}[\cite{HA_critical}, \cite{Threshold}]
\label{thm:dim2}
In the case $d=2$, if $q$ satisfies Assumption \ref{a:a1} for some $\beta>2$ then we have $\ell_c=0$.
\end{theorem}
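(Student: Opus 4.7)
The plan is to prove $\ell_c = 0$ via the two matching inequalities $\ell_c \geq 0$ and $\ell_c \leq 0$, both built on the sign symmetry $f \stackrel{\text{law}}{=} -f$, which for $d=2$ places the level $0$ in a \emph{self-dual} position.

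To obtain $\ell_c \geq 0$, I would show that $\mathcal{E}_0(f)$ almost surely has no unbounded component. The starting point is that $\{f \geq 0\}$ and $\{f \leq 0\}$ have the same law and differ from exact complements only on the zero set $\{f = 0\}$, which is almost surely a disjoint union of smooth curves of zero Lebesgue measure by non-degeneracy of $f$. Planar topological duality --- a horizontal crossing of a rectangle in $\{f \geq 0\}$ obstructs a vertical crossing of the same rectangle in $\{f \leq 0\}$, and vice versa --- then forces the horizontal crossing probability of a square at level $0$ to equal $1/2$. Assuming towards contradiction that percolation held at level $0$, ergodicity of $f$ under translations of $\mathbb{R}^2$ (which follows from decay of correlations implied by Assumption \ref{a:a1}) would upgrade this to almost sure percolation in both $\{f \geq 0\}$ and $\{f \leq 0\}$. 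Applying Zhang's argument --- the FKG inequality (valid by strong positivity $q \geq 0$) combined with the square-root trick on the four ``side-of-large-box to infinity'' events for each phase --- would then produce with high probability four disjoint unbounded arms of alternating phases leaving any fixed large box, contradicting uniqueness of the infinite cluster in each phase. The uniqueness statement itself must first be established via a Burton-Keane-type trifurcation argument adapted to the continuum.

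For $\ell_c \leq 0$, I would upgrade the $1/2$-crossing estimate at level $0$ to a Russo-Seymour-Welsh (RSW) type lower bound on the crossing probabilities of rectangles of arbitrary fixed aspect ratio, at every scale. For smooth Gaussian fields such RSW machinery was developed in \cite{BG16}, leveraging strong positivity of $q$ and quantitative decorrelation coming from the decay rate $\beta > 2$. Gluing crossings into circuits in annuli via FKG would then forbid exponential decay of crossing probabilities at level $0$. However, if $\ell_c > 0$ held, the sharpness of the phase transition proved in \cite{Threshold} would force exponential decay of crossings at every $\ell < \ell_c$, in particular at $\ell = 0$, a contradiction.

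The main obstacle in this plan is the RSW step: in the absence of independence, propagating a crossing estimate from squares to long rectangles requires delicate conditioning on the trace of $f$ along one-dimensional interfaces, combined with quantitative decorrelation, and is genuinely sensitive to the smoothness and decay assumptions on $q$. A secondary but non-trivial difficulty is the continuum version of the Burton-Keane argument needed for cluster uniqueness, which must handle trifurcations of smooth level sets rather than of a discrete graph; this step relies on the $\mathcal{C}^4$ regularity of $f$ granted by the regularity part of Assumption \ref{a:a1}.
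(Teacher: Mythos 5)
This theorem is a cited background result: the paper attributes it to \cite{HA_critical} and \cite{Threshold} and gives no proof of its own, so there is no in-paper argument to compare yours against. Your sketch is nonetheless a reasonable high-level reconstruction of how the cited proofs proceed. The inequality $\ell_c \geq 0$ is obtained from the equality in law of $f$ and $-f$, which pins square-crossing probabilities at level $0$ to $1/2$, combined with a Zhang-type argument (FKG, the square-root trick, and uniqueness of the unbounded cluster, the last via a continuum Burton--Keane or an ergodicity argument). The inequality $\ell_c \leq 0$ is the substantial direction and is where \cite{Threshold} does the heavy lifting: an RSW estimate at level $0$ (developed for smooth planar Gaussian fields in \cite{BG16} and subsequent works) keeps box-crossing probabilities bounded away from $0$, while sharpness of the phase transition gives exponential decay of crossings at every $\ell < \ell_c$; were $\ell_c > 0$, these two statements would contradict each other at $\ell = 0$. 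You correctly identify the RSW step as the genuine obstacle; that step, under the regularity, strong-positivity and decay hypotheses of Assumption \ref{a:a1}, is exactly what makes the cited works non-trivial, and it is why the present paper treats Theorem \ref{thm:dim2} as a black box rather than reproving it.
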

In the case $d\geq 3$ we also have the following result.
\begin{theorem}[\cite{lcd3}]
\label{thm:dim3}
In the case $d\geq 3$, if $q$ satisfies Assumption \ref{a:a1} for some $\beta>d$ then we have
$\ell_c<0$.
\end{theorem}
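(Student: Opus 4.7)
The plan is to establish $\ell_c<0$ by constructing an unbounded component of $\mathcal{E}_{\ell_0}(f)$ for some $\ell_0<0$ via a block renormalization argument that stochastically dominates a supercritical Bernoulli site percolation on $\Z^d$, in the spirit of Grimmett--Marstrand. Fix a target level $\ell_0<0$ close to $0$ (to be chosen at the end) and a large scale $L$. Tile $\R^d$ by cubes $(C_z)_{z\in \Z^d}$ of side $L$ and declare $C_z$ \emph{good} if the enlarged cube $C_z^+$ contains a connected component of $\mathcal{E}_{\ell_0}(f)\cap C_z^+$ that crosses $C_z$ in every coordinate direction and meets every face of $C_z^+$ at a macroscopic arc. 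This event is increasing in $f$, hence the good events are positively correlated by FKG, which holds thanks to the strong positivity $q\geq 0$ in Assumption \ref{a:a1}.

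Next, I would replace $f$ by a finite range approximation $f_R$ (as in Definition \ref{def:fR}) obtained by truncating the white noise representation at radius $R=R(L)$. The decay estimate $|q(x)|,\|\nabla q(x)\|=O(\|x\|^{-\beta})$ with $\beta>d$ from Assumption \ref{a:a1} yields a quantitative Gaussian bound on the supremum deviation of $f-f_R$ over any cube of side $L$, so the probability that the good event computed with $f$ differs from the one computed with $f_R$ can be made arbitrarily small in $R$. Because $f_R$ has finite range, the family of good events becomes $k$-dependent for some $k=k(L,R)$, and by the Liggett--Schonmann--Stacey domination theorem it then suffices to push the probability that a single cube is good above the threshold required for stochastic domination of supercritical Bernoulli site percolation on $\Z^d$.

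The main obstacle, and the core of the proof, is the crossing estimate: for every $p<1$ one must exhibit $\ell_0<0$ and $L$ large so that $\mathbb{P}(C_0 \text{ is good})\geq p$. My strategy is to first work at the symmetric level $\ell=0$, where the restriction of $f$ to a two-dimensional affine plane is a planar Gaussian field to which Theorem \ref{thm:dim2} applies, so that RSW-type crossing probabilities for planar boxes are bounded below by a positive constant. I would then exploit the $d-2\geq 1$ transverse directions available in dimension $d\geq 3$ to run a multi-scale construction that stacks many essentially independent planar crossings in parallel slices (independence being provided, up to small error, by the finite-range approximation $f_R$), and fuses them through short transverse connectors. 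This should drive the box-crossing probability toward $1$ as $L\to \infty$. Once this is established at $\ell=0$, continuity of $\ell\mapsto \mathbb{P}(C_0\text{ is good at level }\ell)$, together with monotonicity of $\ell\mapsto \mathcal{E}_\ell(f)$, transfers the lower bound to some strictly negative $\ell_0<0$.

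Combining the three ingredients — finite range approximation, high crossing probability at level $\ell_0<0$, and stochastic domination of a supercritical Bernoulli process — a standard face-to-face gluing argument produces an unbounded connected component of good cubes, hence of $\mathcal{E}_{\ell_0}(f)$, so that $\ell_c\leq \ell_0<0$. I expect the decisive step to be the multi-scale boosting of the crossing probability in dimension $d\geq 3$ at level $\ell=0$: this really requires the extra transverse room (in $d=2$ the critical level is exactly $0$ by Theorem \ref{thm:dim2} and crossing probabilities stay bounded away from $1$), and controlling the Gaussian dependence between parallel slices while aggregating their crossings is where the bulk of the technical work lies.
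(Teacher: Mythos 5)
First, a point of scope: Theorem \ref{thm:dim3} is not proved anywhere in this paper --- it is imported as a black box from the reference \cite{lcd3} cited in its statement --- so there is no internal proof to compare your sketch against. In this paper the inequality $\ell_c<0$ is an input, not a result.

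Taking your sketch on its own merits, it does capture the architecture one expects (finite-range truncation of the white-noise representation, LSS stochastic domination of a supercritical Bernoulli block process, bootstrap from the planar level-$0$ RSW bounds furnished by Theorem \ref{thm:dim2}, exploitation of the $d-2$ transverse directions). But the step you yourself flag as decisive --- driving the block-crossing probability toward $1$ at a strictly negative level --- is where the real proof lives, and your description of it has genuine gaps. ``Stacking many essentially independent planar crossings in parallel slices'' does not come for free: even for the truncated field $f_R$, slices at transverse distance less than $R$ are fully correlated, so a cube of side $L$ yields only $O(L/R)$ genuinely independent slices, and you must track how $L$, $R$ and the approximation error co-scale. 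More importantly, having \emph{some} slice carry a crossing gives you slab-crossing, but the good-block event needed for a renormalization is much stronger: you need crossings in adjacent blocks to actually glue, and the naive way to arrange this --- a Pisztora-style local-uniqueness event --- fails at level $0$ (indeed, the dual set $\{f\leq 0\}$ may well percolate, which is part of what is being proved), and you cannot invoke it before the level is pushed strictly negative. Likewise, RSW at $\ell=0$ only gives a crossing probability bounded away from both $0$ and $1$, so ``once this is established at $\ell=0$'' plus a single application of continuity does not deliver the near-certainty demanded by the domination theorem. What is actually needed is a genuine Grimmett--Marstrand-type seed-and-sprinkle bootstrap, trading a small decrease in $\ell$ for extra transverse room at each scale, and that mechanism is precisely where the hypothesis $d\geq 3$ and the strict negativity of $\ell_0$ are earned; your sketch stops just short of specifying it.
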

\begin{remark}
One may remark that the critical parameter $\ell_c$ in the case of dimension $d=2$ is independent of the exact law of the field we consider, under generic assumptions it will always be equal to $0$. However, for $d\geq 3$, the value of $\ell_c$ may depend on the law of the field. Nevertheless, Theorem \ref{thm:dim3} states that although we do not know the exact value of $\ell_c$ in dimension $d\geq 3$, this critical parameter $\ell_c$ is always negative, meaning that it is strictly easier to percolate in a space of dimension $d\geq 3$ than in the plane.
\end{remark}
\begin{remark}
 Note that the statements of the two theorems \ref{thm:dim2} and \ref{thm:dim3} may hold under weaker assumptions. We refer the reader to \cite{Threshold} and \cite{lcd3} to see the exact assumptions that were made on the law of $f$, as for the sake of clarity we prefer to not write the details here.
\end{remark}
When $\ell>\ell_c$ we say that we are in the \textit{supercritical phase}, when $\ell<\ell_c$ we say that we are in the \textit{subcritical phase}. This phase transition has received a lot of attention over the past few years. One result of interest is the following
\begin{theorem}[\cite{HA_critical},\cite{Threshold} for $d=2$, \cite{severo2022uniqueness} for $d\geq 3$]
\label{thm:unicity}
If $d\geq 2$ and if $q$ satisfy Assumption \ref{a:a1} for some $\beta>d$, then if $\ell >\ell_c$, the set $\excur(f)$ almost surely presents a unique unbounded component.
\end{theorem}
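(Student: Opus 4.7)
The plan is to adapt the classical Burton--Keane uniqueness argument to the continuous Gaussian setting. Let $N$ denote the random number of unbounded connected components of $\excur(f)$. Since $N$ is translation invariant and the field $f=q*W$ is ergodic under the $\R^d$-translation action---its spectral measure has density $|\widehat{q}|^2$, so $f$ is even mixing---$N$ is almost surely equal to some deterministic constant $N_0\in\{0,1,2,\ldots,\infty\}$. In the supercritical regime $\ell>\ell_c$ one has $N_0\geq 1$, so the task is to rule out $N_0\geq 2$.

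To exclude the case $2\leq N_0<\infty$, I would use a Cameron--Martin perturbation playing the role of a ``finite energy'' property. Suppose $N_0=k$ with $2\leq k<\infty$. Then for $R$ large enough, with positive probability all $k$ unbounded components meet the box $B_R:=[-R,R]^d$. Using the white-noise representation $f=q*W$, translating $W$ by a smooth compactly supported $\varphi\geq 0$ turns $f$ into $f+q*\varphi$, and the law of the perturbed field is mutually absolutely continuous with that of $f$. Choosing $\varphi$ large enough near $B_R$ ensures $B_R\subset\excur(f+q*\varphi)$, which merges the $k$ unbounded components into a single one. Since this perturbation has positive probability, the almost-surely constant $N$ cannot take a finite value $\geq 2$.

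To exclude the case $N_0=\infty$, I would implement Burton and Keane's trifurcation count after a mild discretization. Fix $\varepsilon>0$ small and for $x\in \varepsilon\Z^d$ set $b_x:=x+[0,\varepsilon]^d$; call $x$ a \emph{trifurcation} if deleting $b_x$ from $\excur(f)$ splits the connected component containing $b_x$ into at least three unbounded pieces, each touching $\partial b_x$. By stationarity the density of trifurcations equals some constant $\delta\geq 0$, and the Cameron--Martin perturbation above, combined with FKG (available thanks to the strong positivity $q\geq 0$), lets one build a trifurcation configuration at the origin with positive probability whenever $N_0=\infty$, so $\delta>0$. Burton and Keane's combinatorial lemma gives a deterministic upper bound for the number of trifurcations inside $B_R$ by the number of distinct unbounded pieces meeting $\partial B_R$, which is $O(R^{d-1})$. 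Comparing with the lower bound $\delta R^d$ from stationarity yields a contradiction for large $R$.

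The main obstacle is that Gaussian fields do not genuinely satisfy finite energy: values of $f$ inside a region are correlated with those outside, and local modifications of the configuration are only available in the weaker Cameron--Martin sense rather than as true independence. Every step that in Bernoulli percolation would proceed via ``modify on a box'' must instead be carried out quantitatively, using FKG together with the polynomial decay of $q$ from Assumption \ref{a:a1} to control the remaining long-range dependencies. Establishing the strict positivity $\delta>0$ of the density of trifurcations in this correlated setting is the most delicate point, and it is here that the proofs in the references (for $d=2$ and for $d\geq 3$) diverge in their technical details.
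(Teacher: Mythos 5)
The paper does not prove Theorem \ref{thm:unicity}; it is cited from the literature, and the paper itself only remarks (in the discussion following Definition \ref{def:local_uniqueness_event}) that the referenced proofs ``use non quantitative ideas coming from ergodicity and the Cameron-Martin formula.'' Your sketch---ergodicity to make the number of unbounded components almost surely constant, a Cameron--Martin perturbation as a substitute for finite energy, and a Burton--Keane trifurcation count to exclude infinitely many components---is exactly that strategy, so it is in line with the approach the paper attributes to its references.
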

This theorem already gives a good description of the supercritical phase. The set $\excur(f)$ contains only one unbounded connected component and (countably) many finite ones.
In this article we aim to obtain a more geometric information about the unbounded component of $\excur(f)$.
In order to do so we introduce the following notations and definitions.
\begin{definition}
Let $\ell\in \R$ and $A,B$ be two subsets of $\R^d$, we define the event $\left\{A\connects B\right\}$, also denoted by $\left\{A \overset{f\geq -\ell}{\longleftrightarrow}B\right\}$, as the event that there exists a connected component of $\excur(f)$ that intersects both $A$ and $B$.
When $A=\{x\}$ and $B=\{y\}$ are two singletons we simply use the notation $\left\{x\connects y\right\}$ instead of $\left\{\{x\}\connects \{y\}\right\}$.
\end{definition}
\begin{remark}
 We also use the notation $\{A\overset{f\leq \ell}{\longleftrightarrow}B\}$ as a shorthand for $\{A \overset{\mathcal{E}_{\ell}(-f)}{\longleftrightarrow}B\}.$ And we similarly define $\{x\overset{f\leq \ell}{\longleftrightarrow}y\}.$
\end{remark}
We then define the chemical distance as a generalisation of the graph distance for the case of Bernoulli percolation.
\begin{definition}
\label{def:chemical_distance_random}
Given $x,y\in \R^d$ and a subset $\mathcal{E}\subset \R^d$ we define the set $\Gamma(\mathcal{E},x,y)$ as
\begin{equation}
    \Gamma(\mathcal{E},x,y) := \{\gamma : [0,1] \to \mathcal{E}\ |\ \gamma \text{ is continuous and rectifiable}, \gamma(0)=x,\gamma(1)=y\}.
\end{equation}
The set $\Gamma(\mathcal{E},x,y)$ can be empty, for instance if $x$ or $y$ are not in $\mathcal{E}$ or are not in the same connected component of $\mathcal{E}$.
The chemical distance between $x$ and $y$ is denoted by $d_\chem^{\mathcal{E}}(x,y)$, it takes values in $\mathbb{R}_+\cup \{\infty\}$ and is defined as
\begin{equation}
    \label{eq:def_dchem}
    d_\chem^{\mathcal{E}}(x,y) := \inf\{\text{length}(\gamma)\ |\ \gamma\in \Gamma(\excur(f),x,y)\},
\end{equation}
with the convention: $\inf \emptyset = \infty$.
Here, $\text{length}(\gamma)$ denotes the Euclidean length of the curve $\gamma$ ($\gamma$ being rectifiable).
\end{definition}
\begin{remark}
 One may suggest the case where we have two points $x$ and $y$ in the same connected component of $\excur(f)$, but with no rectifiable path between them, hence having $x\connects y$ but $\dchem(x,y)=\infty$. We will see that we do not need to concern ourselves with such a possibility since an analysis of our arguments shows that this event has zero probability.
\end{remark}
Our main theorem is the following:
\begin{theorem}
\label{thm:theoreme_principal}
Assume $q$ satisfies Assumption \ref{a:a1} with some $\beta>d$ and that $\ell>-\ell_c(q)$. Then for any $\delta>0$, there exists a constant $\kappa'>0$ such that
\begin{equation}  
    \Proba{0\connects x\text{ and } \dchem(0,x) > \norm{x}{}\log^{\kappa(\delta)}(\norm{x}{})} = O\left(\norm{x}{}^{-\kappa'}\right),
\end{equation}
where $\kappa(\delta) = (1+\delta)(d-1)\left(\frac{1}{2}+\frac{1}{2\beta-d}\right).$
\end{theorem}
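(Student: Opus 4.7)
The plan is to follow the Antal--Pisztora renormalization strategy of \cite{Antal}, adapted to the continuous Gaussian setting. At a mesoscopic scale $R = R(\norm{x}{})$ to be fixed at the end, tile $\R^d$ with boxes $B_k = R k + [0,R)^d$ for $k \in \Z^d$. Declare a box $B_k$ \emph{good} if three conditions hold: (i) $\excur(f) \cap 2B_k$ contains a cluster $C_k$ crossing each face sub-box of $2B_k$; (ii) any two points of $C_k$ are joined inside $C_k$ by a rectifiable path of Euclidean length $\leq c R$; (iii) no connected component of the dual set $\{f \leq -\ell\}$ that meets $2B_k$ has diameter larger than $R/10$. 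The strong supercritical hypothesis $\ell > -\ell_c$ is used precisely in (iii): by stationarity and the symmetry $-f \overset{d}{=} f$, the set $\{f \leq -\ell\}$ has the law of the excursion set at the subcritical level $-\ell < \ell_c$, so its cluster radii have rapidly decaying tails.

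To estimate $\Prob(B_k \text{ bad})$, I would first replace $f$ by its finite-range truncation $f_R$ (Definition \ref{def:fR}), which decouples the defining events in non-neighboring blocks and puts us into a Bernoulli framework. The sup-norm error $\norm{f - f_R}{L^\infty(2B_k)}$ is controlled by Borell--TIS together with the polynomial decay in Assumption \ref{a:a1}, producing a typical error of order $R^{-(2\beta - d)/2}$ up to $\sqrt{\log R}$ corrections. A sprinkling argument absorbs this error into the available slack $\ell - (-\ell_c) > 0$; conditions (i) and (ii) then follow from Theorem \ref{thm:unicity} combined with FKG and a quantitative uniqueness-of-the-infinite-cluster lemma, while (iii) follows from the subcriticality of $\{f \leq -\ell\}$. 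The schematic estimate is then $\Prob(B_k\text{ bad}) \leq C e^{-c R^a} + C R^{-c'}$.

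Once the bad probability is small, the Liggett--Schonmann--Stacey domination implies that good boxes stochastically dominate a highly supercritical Bernoulli site percolation on $\Z^d$, to which the Antal--Pisztora graph-distance bound applies. It yields that the renormalized graph distance between the boxes containing $0$ and $x$ is at most $C \norm{x}{}/R$, outside an event of probability $\exp(-c \norm{x}{}/R)$. Condition (ii) then lets me concatenate the traversed good boxes, through their shared boundary clusters from (i), into a rectifiable path $\gamma \in \Gamma(\excur(f),0,x)$ of total Euclidean length $O(\norm{x}{})$, giving $\dchem(0,x) = O(\norm{x}{})$ on this event.

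The main obstacle, and the source of the $\log^{\kappa(\delta)}$ factor, is the calibration of $R$. The polynomial decay $q(x) = O(\norm{x}{}^{-\beta})$ forces the truncation error to scale as $R^{-(2\beta - d)/2}$, while the sprinkling must dominate a Gaussian fluctuation of order $\sqrt{\log(\norm{x}{}/R)}$ on a $(d-1)$-dimensional union-bound over the $\sim (\norm{x}{}/R)^{d-1}$ boxes transverse to the renormalized crossing. Matching these two scales forces $R$ to grow like $(\log \norm{x}{})^{(1+\delta)(d-1)(1/2 + 1/(2\beta - d))}$, producing exactly the exponent $\kappa(\delta)$; the polynomial slack $\norm{x}{}^{-\kappa'}$ in the conclusion comes from the bad-box probability multiplied by the number of boxes touched. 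Carrying out this balance while uniformly preserving the supercritical slack through sprinkling is the technical heart of the argument.
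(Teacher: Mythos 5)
Your overall scaffold (renormalization, LSS domination, Antal--Pisztora graph distance, sprinkling via $\ell>-\ell_c$ and $f\overset{d}{=}-f$ to control the dual set) matches the paper's strategy, but there are three concrete gaps in the way you fill it in.

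First, your condition (ii) --- that inside a good box any two points of the crossing cluster are joined by a rectifiable path of Euclidean length $\le cR$ --- is not something the paper proves, and it is precisely the statement that a continuous Gaussian level set cannot contort too much, which is highlighted as the hard part. You claim it follows from uniqueness plus FKG, but uniqueness gives no length control whatsoever. The paper sidesteps this entirely: it works with the $\varepsilon$-discretization $f_r^\varepsilon$, which is piecewise constant on cubes of side $\varepsilon$. A path in $\{f_r^\varepsilon \geq -\ell'\}$ can always be chosen to visit each small cube at most once, giving a \emph{deterministic} length bound of order $M\,\varepsilon\,(R/\varepsilon)^d$ across $M$ renormalized boxes. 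No probabilistic short-path estimate inside a good box is needed. This is also why your balancing argument for $\kappa(\delta)$ is wrong: in the paper the exponent comes from $\text{length}(\gamma) \lesssim (x/R)\cdot R^d/\varepsilon^{d-1}$ with $R=\log^{(1+\delta)/(2\beta-d)}(x)$ and $\varepsilon=\log^{-(1+\delta)/2}(x)$, not from calibrating $R$ itself to be $\log^{\kappa(\delta)}(x)$. Note that $R$ stays of the smaller order $\log^{(1+\delta)/(2\beta-d)}$ throughout; the extra $(R/\varepsilon)^{d-1}$ volume factor per box is what produces $\kappa(\delta)$.

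Second, your argument omits a necessary ingredient: you still have to connect $0$ and $x$ to the good-box path $\gamma$, which a priori only comes within distance $\sim R\log^{1+\delta}(\|x\|)$ of the endpoints (and is built for $f_r^\varepsilon$ at a slightly lower level). The paper handles this with two local uniqueness events at poly-logarithmic scale around $0$ and $x$, combined with Proposition \ref{prop:local} (a quantitative implicit function theorem) to bound the chemical diameter inside those two small boxes. This is also where the polynomial tail $\|x\|^{-\kappa'}$ originates --- from the weak estimate of Proposition \ref{prop:local}, \emph{not} from the bad-box probability multiplied by the number of boxes, as you state; all the renormalization errors decay super-polynomially and are negligible next to this local term. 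Without this local connection step your path need not actually start at $0$ or end at $x$, and your claimed conclusion $\dchem(0,x)=O(\|x\|)$ would in any case contradict the $\log^{\kappa(\delta)}$ factor you then set out to justify.
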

We make a few comments about Theorem \ref{thm:theoreme_principal}. The first observation is that this theorem is to be compared with the following result of Peter Antal and Agoston Pisztora concerning the chemical distance in the context of Bernoulli percolation.
\begin{theorem}[\cite{Antal}]
\label{thm:antal}
In the context of Bernoulli percolation, if $d\geq 2$ and $p>p_c(d)$, we have constants $c,\rho>0$ (depending on $d$ and $p$) such that
$$\Proba{0 \overset{\mathcal{G}_p(\omega)}{\longleftrightarrow} x\text{ and } \text{d}_\text{chem}^{\mathcal{G}_p(\omega)}(0,x)> \rho \norm{x}{}} = O\left(e^{-c\norm{x}{}}\right).$$
In the above, $\text{d}_\text{chem}^{\mathcal{G}_p(\omega)}$ denotes the graph distance induced in $\mathcal{G}_p(\omega)$.
\end{theorem}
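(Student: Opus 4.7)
The plan is to prove Theorem \ref{thm:theoreme_principal} by a coarse-graining argument inspired by Antal--Pisztora (Theorem \ref{thm:antal}), adapted to the continuous Gaussian setting. I would partition $\R^d$ into mesoscopic boxes of side $N$, declare each box \emph{good} when the local geometry of $\excur(f)$ inside a thickened neighborhood is as expected deep in the supercritical phase (a unique macroscopic cluster crossing the box in every direction, with controlled internal chemical distance), and then reduce the chemical-distance problem to a Bernoulli good/bad site percolation at the box scale to which Theorem \ref{thm:antal} can be applied directly.

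To obtain near-independence between non-adjacent boxes despite the long-range correlations of $f$, I would first truncate the white-noise representation: set $f_R := (q\mathbf{1}_{B(0,R)}) \ast W$, a field of finite range $2R$. Assumption \ref{a:a1} with $\beta > d$ yields $\mathrm{Var}(f - f_R) = O(R^{-(2\beta-d)})$, and combining Borell--TIS with a union bound over a net gives
\begin{equation*}
\Prob\Bigl[\sup_{\norm{y}{}\leq L}|f(y) - f_R(y)| > \eta\Bigr] \leq C L^d \exp\bigl(-c\eta^2 R^{2\beta - d}\bigr).
\end{equation*}
Off this event one has the sandwich $\mathcal{E}_{\ell-\eta}(f_R) \subset \excur(f) \subset \mathcal{E}_{\ell+\eta}(f_R)$, so I may formulate the good-box event for $f_R$ at a slightly perturbed level. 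The hypothesis $\ell > -\ell_c$ enters crucially here: equivalently $-\ell < \ell_c$, so by symmetry of the centred Gaussian the sub-excursion $\{f \leq -\ell\}$ is \emph{subcritical}, and sharpness of the Gaussian phase transition (\cite{severo2022uniqueness}) gives exponential decay of dual crossings, precluding coexistence of two macroscopic primal clusters in the thickened box. This, combined with FKG lower bounds on primal crossings, pushes the bad-box probability below $\exp(-c N^{\gamma})$ for some $\gamma > 0$ as long as $N \gtrsim R$.

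Good-box statuses depend only on $f_R$ in a thickened box, so boxes at coordinate distance $\geq 4$ are independent, and Liggett--Schonmann--Stacey stochastic domination produces a Bernoulli site percolation on the coarse lattice of density $p(N) \to 1$. Theorem \ref{thm:antal} then delivers, with superpolynomial failure probability, a chain of $O(\norm{x}{}/N)$ good boxes from the box of $0$ to the box of $x$; concatenating the in-box paths of length $O(N)$ furnished by the good-box definition produces a path in $\excur(f)$ whose total length is a constant times $\norm{x}{}$. The logarithmic overshoot in the theorem arises entirely from the parameter optimization: to make the Borell--TIS union bound, the bad-box probability, and the Antal--Pisztora tail all decay polynomially in $\norm{x}{}$, one needs $\eta^2 R^{2\beta-d}$ of order $\log \norm{x}{}$, the scale $N$ chosen to beat the $(d-1)$-dimensional Gaussian tail union bounds on the interfaces of boxes, and $N \gtrsim R$. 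Balancing these constraints yields an effective box scale $N$ of order $\log^{\kappa(\delta)}(\norm{x}{})$ with precisely the exponent $(1+\delta)(d-1)\bigl(\tfrac{1}{2} + \tfrac{1}{2\beta-d}\bigr)$, where the $(d-1)/2$ tracks Gaussian tails on a $(d-1)$-dimensional interface and $(d-1)/(2\beta-d)$ encodes the trade-off between $R$ and $\eta$ in the approximation error.

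I expect the main obstacle to lie in establishing the in-box chemical-distance estimate itself (the third clause of the good-box event), namely that any two macroscopic crossings of a box are joined within that same box by a path in $\excur(f)$ of length $O(N)$. In Bernoulli percolation this is a direct consequence of Antal--Pisztora, but here it is precisely the statement one is trying to prove, so a self-contained multi-scale bootstrap is required: one proves the in-box estimate first at a seed scale using sharpness and FKG, and then iterates the renormalization at successively larger scales. The precise form of the exponent $\kappa(\delta)$ reflects a careful calibration of this bootstrap against the Gaussian tail inputs, and I anticipate most of the technical work to lie in this stage.
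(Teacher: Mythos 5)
The statement you were asked about is the Antal--Pisztora theorem (\cite{Antal}) for supercritical Bernoulli percolation on $\Z^d$. The paper does not prove this result; it quotes it from the literature as a known input. Your proposal does not prove it either: you open with ``The plan is to prove Theorem \ref{thm:theoreme_principal}\dots'', and your entire argument is a sketch of the paper's \emph{main} Gaussian theorem, in which Theorem \ref{thm:antal} itself is invoked as a black box at the coarse scale (``Theorem \ref{thm:antal} then delivers, with superpolynomial failure probability, a chain of $O(\norm{x}{}/N)$ good boxes\dots''). Taken as a proof of Theorem \ref{thm:antal} your proposal is therefore circular, and the Gaussian-specific inputs you lean on --- white-noise truncation, Borell--TIS tails, sharpness via \cite{severo2022uniqueness}, Assumption \ref{a:a1} --- have no meaning in the purely Bernoulli setting.

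If you actually wish to prove Theorem \ref{thm:antal}, the argument of \cite{Antal} runs through a Pisztora-style renormalization of the Bernoulli configuration into good and bad $N$-blocks (defined by block crossing and local uniqueness events), Liggett--Schonmann--Stacey stochastic domination of the resulting block process by a high-density product measure, and a combinatorial Peierls control of the external boundary of the union of bad star-clusters intersecting a coarse-grained straight line from $0$ to $x$. These are exactly the ingredients that the present paper abstracts and repackages as Theorem \ref{prop:stoch:p2} and Proposition \ref{prop:newman}. Such an argument must be carried out entirely at the Bernoulli level and cannot cite the statement it is meant to establish.
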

A major difference between the two theorems is that ours is not optimal, in the sense that it only holds for $\ell>-\ell_c$ instead of the expected full supercritical region $\ell>\ell_c$. This is due to the lack of estimates on the probability of some local uniqueness event (see Definition \ref{def:local_uniqueness_event} for the precise definition of this event). This event is a non-monotonous event and we only manage to prove that it has high probability in the regime $\ell>-\ell_c$. Note that in \cite{vernotte2023chemical}, Theorem \ref{thm:theoreme_principal} was proven for the case of dimension $d=2$ for the full supercritical region $\ell>0$. This is due to the fact that in dimension $2$ one can use planarity arguments and work only with increasing events. 
However, if indeed this local uniqueness event has high probability in the whole region $\ell>\ell_c$, we expect our proof of Theorem \ref{thm:theoreme_principal} to hold in this region (at the very least our argument would work in the region $\ell>0$). Another difference between the two theorems is the logarithm factor which appears in Theorem \ref{thm:theoreme_principal}. This is mainly due to the fact that contrary to Bernoulli percolation where the status of all sites are independent, our field $f$ has values that are heavily 
correlated. This is problematic when we try to develop a renormalization argument, since we need to consider boxes of a logarithmic scale instead of boxes of fixed size. Finally, a last difference is the speed of decrease in the two theorems. We obtain a polynomial decay in Theorem \ref{thm:theoreme_principal} instead of an exponential decay in Theorem \ref{thm:antal}. The reason for this is the fact that a continuous field $f$ can have level sets that contort a lot. While for Bernoulli percolation there is a minimal scale, this is absolutely not the case for a continuous field. Hence, we need to do a local control around the two points we want to connect, $0$ and $x$, to ensure that the chemical distance "generated" near those points is not too high. This is done via a quantitative implicit function theorem. However the estimates obtained are not strong enough to guarantee an exponential decay. This is the only place in the proof where we lose a super-polynomial decay.

We also mention that the problem of understanding chemical distance in the context of percolation was addressed for other models. In \cite{Ding2018} a result is obtained for the Gaussian free field on $\mathbb{Z}^2$ as well as for critical random walk loop soups. In \cite{peretz2025chemicaldistancelevelsets} a result similar to the one of Peter Antal and Agoston Pisztora is obtained for the level sets of the Gaussian free field on $\Z^d$ with $d\geq 3$. In  \cite{DRS14} a general theorem was obtained to obtain a result similar to Theorem \ref{thm:antal} for fields defined on $\mathbb{Z}^d$ that satisfy some weak conditions on the decay of correlations. The main novelty in the present work is the fact that there is no underlying lattice. This makes the control of the chemical distance near one point a new difficulty that was not present in previous settings.

Compared to previous work in dimension 2 \cite{vernotte2023chemical} where planarity was heavily used, the arguments used here are much closer to the ones in \cite{Antal}. The strategy of the proof can be summarized as follows. In Section \ref{sec:2} we prove that in the regime $\ell>-\ell_c$ a local uniqueness event (see Definition \ref{def:local_uniqueness_event}) holds with high probability for $f$, but also for a discrete and finite range approximation of the field $f$. In Section \ref{sec:3}, we prepare a renormalization argument. The content of this section is pretty classical but was formulated to adapt to our framework. The purpose of the renormalization is to build a path of adjacent boxes in which the field $f$ (rather its finite range approximation) satisfies the local uniqueness event. This will allow us to find a path in $\excur(f)$ of controllable length, starting not too far from $0$ and arriving not too far from $x$. Section \ref{sec:4} is dedicated to proving that one can connect $0$ and $x$ to this path by paying a reasonable cost (in terms of chemical distance). This part of the paper is inspired by the arguments developed in \cite{BG16}. We conclude Section \ref{sec:4} with the proof of Theorem \ref{thm:theoreme_principal}.

\textbf{Acknowledgements: }I am very grateful to my PhD advisor Damien Gayet for introducing me to this problem as well as for many insightful discussions and for his remarks on a preliminary version of this paper. I would also like to thank the referee for their comments and remarks on a first version of this paper.

\section{On the local uniqueness event}
\label{sec:2}
In this section we present the local uniqueness event and show that it has high probability in the region $\ell>-\ell_c$.

We begin by properly defining the local uniqueness event as the intersection of two events, one is an "existence" event while the other one is a "uniqueness" event (the later being the hard and interesting part). First we introduce some notations.
For $R>0$ we define the box $B_R\subset \R^d$ as
\begin{equation}
    \label{eq:def_BR}
    B_R := [-R,R]^d.
\end{equation}
For $i\in \{1,\dots,d\}$ we define the two $i$-faces $F_R^{i,+}$, $F_R^{i,-}$ of $B_R$ as
\begin{align*}
    F_R^{i,-} & := [-R,R]^{i-1}\times \{-R\}\times [-R,R]^{d-i}, \\
    F_R^{i,+} & := [-R,R]^{i-1}\times \{+R\}\times [-R,R]^{d-i}.
\end{align*}
\begin{definition}
\label{def:existence_event}
For $R>1$ and $\ell \in \mathbb{R}$, we define the existence event $\text{Exist}(R,\ell)$ as the event that for all $i\in \{1,\dots,d\}$, there exists a connected component of $B_R\cap\excur(f)$ that intersects both $F_R^{i,-}$ and $F_R^{i,+}$ (see Figure \ref{fig:illustration_exist} for an illustration of this event).
\end{definition}
\begin{figure}
    \centering
    \includegraphics[width=8cm]{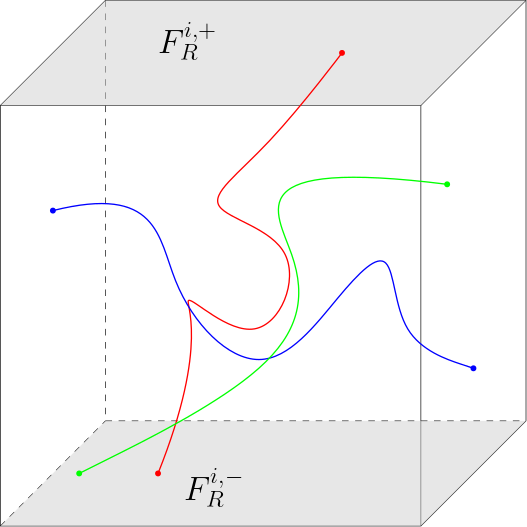}
    \caption{Illustration of the event $\text{Exist}(R,\ell)$ for $d=3$. We have three different crossings (represented in different colors), each joining two opposite faces of the box.}
    \label{fig:illustration_exist}
\end{figure}
We also define the uniqueness event.
\begin{definition}
\label{def:unique_event}
For $R>1$, $\ell \in \mathbb{R}$, we can consider the set $B_R \cap \excur(f)$ as a union of disjoint connected components $B_R \cap \excur(f) = \bigsqcup_{i\in I}\mathcal{C}_i$. Given some $\kappa \in ]0,1[$, the uniqueness event $\text{Unique}(R,\ell, \kappa)$ is the event that all $\mathcal{C}_i$ of diameter at least $\kappa R$ are connected within $B_{R(1+\kappa)} \cap \excur(f)$.
\end{definition}
\begin{remark}
 One could define the uniqueness event as the event that at most one of the $\mathcal{C}_i$ in the above definition has diameter greater than $\kappa R$, however, in order to avoid many technicalities later on, we prefer to work with this weaker definition.
\end{remark}
The local uniqueness event in the box $B_R$ is simply the intersection of the existence event and the uniqueness event in this box, more formally we make the following definition.
\begin{definition}
\label{def:local_uniqueness_event}
For $R>1$, $\ell \in \mathbb{R}$ and $\kappa\in ]0,1[$ we define the local uniqueness event $\mathcal{A}(R,\ell,\kappa)$ as
\begin{equation}
    \label{eq:local_uniqueness_event}
    \mathcal{A}(R,\ell,\kappa) := \text{Exist}(R,\ell) \cap \text{Unique}(R,\ell, \kappa).
\end{equation}
\end{definition}
We make a few comments about this local uniqueness event. These comments are not useful for the proof of our theorem but give some context about this type of event and its importance. First, note that having a high probability of the event $\mathcal{A}(R,\ell,\kappa)$ can be seen as a quantitative statement of Theorem \ref{thm:unicity}. Indeed the proof of Theorem \ref{thm:unicity} uses non quantitative ideas coming from ergodicity and the Cameron-Martin formula and does not quite describe the geometry of the unbounded cluster. Second, recall that an event is said to be \textit{increasing} if increasing values of the field $f$ can only favor the realisation of the event (that is $E$ is an increasing event if for all $f,g$ we have $(f\leq g \text{ and }f\in E)\Rightarrow g\in E$). The existence event in Definition \ref{def:existence_event} is clearly an increasing event, as the set $\excur(f)$ can only grow when $f$ increases. However, the uniqueness event in Definition \ref{def:unique_event} is neither increasing nor decreasing. Thus, our event $\mathcal{A}_R$ itself is neither increasing nor decreasing. Hence, the whole machinery designed to deal with monotonous event fails to work with the event $\mathcal{A}(R,\ell, \kappa)$, so that it is not trivial to obtain good estimates about the probability of this event. If we consider Bernoulli percolation it is proved (see for instance \cite{Pisztora1996}) that the event $\mathcal{A}(R,\ell, \kappa)$ has high probability in the whole supercritical phase (it is trivial that this event can not have high probability in the subcritical phase). It is conjectured (see for instance \cite{lcd3}) that the same is true in the context of continuous Gaussian field. In dimension $d\geq 3$ little is known about the probability of this event. In \cite{lcd3} a weaker result concerning some form of a cluster "present everywhere" was proved, but did not guarantee high probability for the event $\mathcal{A}(R,\ell, \kappa)$ even for $\ell>0$. In \cite{strong_unicity_1} and \cite{strong_unicity_2} analogues of the event $\mathcal{A}(R,\ell,\kappa)$ were proven to have high probability in the whole the region $\ell>\ell_c$ but for different percolation models. In our paper we prove that the event $\mathcal{A}(R,\ell, \kappa)$ holds with high probability as soon as $\ell>-\ell_c$ for a large class of Gaussian fields. By Theorem \ref{thm:dim3}, this result is weaker than the expected region $\ell>\ell_c$, but it still gives some  non trivial region in which the result holds. 

This section is dedicated to the proof of the following statement.
\begin{proposition}
\label{prop:strong_perco1}
Assume that $q$ satisfies assumption \ref{a:a1} for some $\beta>d$. Suppose $\ell>-\ell_c$, $\kappa\in ]0,1[$, then there exist constants $c,C>0$, such that, for all $R>1$,
\begin{equation}
    \label{eq:local_uniqueness_for_f}
    \Proba{f\in \mathcal{A}(R,\ell,\kappa)} \geq 1 -CR^de^{-cR}. 
\end{equation}
\end{proposition}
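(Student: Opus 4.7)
The key observation that drives the whole argument is that the hypothesis $\ell>-\ell_c$ is precisely the subcritical regime for the complement: since $-f$ has the same law as $f$, the level set $\{f\le -\ell\}=\mathcal{E}_{-\ell}(-f)$ is subcritical as soon as $-\ell<\ell_c$, i.e.\ $\ell>-\ell_c$. Under Assumption \ref{a:a1} with $\beta>d$, subcritical Gaussian percolation is known to satisfy exponential decay of cluster diameters -- the Gaussian analogue of the Aizenman--Barsky--Menshikov theorem, available in the cited literature (for instance \cite{Threshold} in $d=2$ and through the sharpness-of-phase-transition machinery of \cite{severo2022uniqueness} in $d\ge 3$). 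Concretely, one extracts constants $c,C>0$ such that for all $x\in\R^d$ and all $r\ge 1$,
\begin{equation*}
\Proba{\text{the component of }x\text{ in }\{f\le -\ell\}\text{ has diameter }\ge r}\le Ce^{-cr}. \tag{$\ast$}
\end{equation*}
The plan is then to reduce the complements of $\text{Exist}(R,\ell)$ and of $\text{Unique}(R,\ell,\kappa)$ to the appearance of a large cluster of $\{f\le -\ell\}$, and to finish by $(\ast)$ and a union bound over $O(R^d)$ unit cubes.

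For the existence event, I would argue: if $\text{Exist}(R,\ell)^c$ holds, then for some $i\in\{1,\dots,d\}$ no connected component of $B_R\cap\excur(f)$ joins $F_R^{i,-}$ to $F_R^{i,+}$. A standard topological separation argument (made rigorous either by discretising on a grid much finer than the correlation length, or by exploiting that $f$ is $\mathcal{C}^4$ so that for almost every level the excursion sets have smooth boundary) produces a connected closed set $K\subset \{f\le -\ell\}\cap B_R$ that separates the two faces inside $B_R$. Such a $K$ must span the $d-1$ directions transverse to $i$, so $\diam(K)\ge 2R$; applying $(\ast)$ with $r=2R$ and union-bounding over the unit cube that meets $K$ gives $\Proba{\text{Exist}(R,\ell)^c}\le CR^d e^{-cR}$.

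For the uniqueness event, suppose $\text{Unique}(R,\ell,\kappa)^c$ holds: there exist two components $\mathcal{C}_1,\mathcal{C}_2$ of $B_R\cap\excur(f)$ of diameter $\ge\kappa R$ whose saturations $\widetilde{\mathcal{C}}_1,\widetilde{\mathcal{C}}_2$ in $B_{R(1+\kappa)}\cap\excur(f)$ remain disjoint. The same kind of topological separation argument produces a connected component $K'$ of $\{f\le -\ell\}\cap B_{R(1+\kappa)}$ that separates $\widetilde{\mathcal{C}}_1$ from $\widetilde{\mathcal{C}}_2$ inside $B_{R(1+\kappa)}$. Because each $\mathcal{C}_i$ has diameter $\ge \kappa R$ and both sit inside $B_R$, the separator $K'$ cannot be contained in a ball of radius $\ll \kappa R$: otherwise one could route a path in $B_{R(1+\kappa)}\setminus K'$ around that ball joining a point of $\mathcal{C}_1$ to a point of $\mathcal{C}_2$. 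Thus $\diam(K')\ge c(\kappa)R$, and another application of $(\ast)$ with a union bound yields $\Proba{\text{Unique}(R,\ell,\kappa)^c}\le CR^d e^{-c'(\kappa)R}$. Summing the two bounds gives the proposition.

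The main obstacle is the input $(\ast)$: the Gaussian subcritical exponential decay of cluster diameters under Assumption \ref{a:a1} with $\beta>d$. This is exactly where the restriction to $\ell>-\ell_c$ (rather than the full supercritical range $\ell>\ell_c$) is forced on us, as noted in the discussion after Theorem \ref{thm:theoreme_principal}. A secondary but non-trivial obstacle lies in making the two topological separation claims rigorous in dimension $d\ge 3$, where no planar duality is available; the cleanest route is to first discretise the field on a mesh finer than the correlation length so that both separation statements become combinatorial facts about separating surfaces in $\Z^d$, and then transfer the resulting bound back to $f$ using continuity of $f$ together with the fact that the level $-\ell$ is a.s.\ a regular value of $f$.
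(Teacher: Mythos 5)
Your overall strategy is the same as the paper's: observe that $\ell>-\ell_c$ makes $\{f\le-\ell\}$ a subcritical excursion set (by the symmetry $f\sim -f$), import exponential decay for the subcritical one-arm/crossing probability (your $(\ast)$ is exactly Corollary \ref{cor:severo_crossing_proba}, obtained from Theorem \ref{thm:severo_crossing_prob}), deduce that all clusters of $\{f\le-\ell\}$ in $B_{R(1+\kappa)}$ are small with probability $1-CR^d e^{-cR}$ by a union bound over $O(R^d)$ unit boxes, and then finish with a deterministic topological implication. So the probabilistic skeleton matches.

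Where your write-up diverges — and where I think it has a genuine gap — is in the topological step. You want to argue the contrapositive directly: if $\text{Unique}$ or $\text{Exist}$ fails, a \emph{single connected} large set $K$ or $K'$ in $\{f\le-\ell\}$ must appear. Two problems. First, in $d\ge3$ there is no reason for a connected separator to exist: two large components of $\excur(f)\cap B_R$ can be kept apart by the \emph{union} of several connected components of $\{f\le-\ell\}$, none of which individually separates. What is actually true (and all that is needed for the union bound) is the weaker statement that \emph{some} component of $\{f\le-\ell\}\cap B_{R(1+\kappa)}$ has diameter $\ge\kappa R$. Second, your one-line argument that the separator must be large — ``otherwise one could route a path in $B_{R(1+\kappa)}\setminus K'$ around that ball'' — is circular: the routed path might hit a different small obstacle, and dealing with a whole collection of small obstacles at once is precisely the content that needs proving. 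This is what the paper's deterministic Proposition \ref{prop:determinitic} provides: assuming only that every component of $E^-\cap B_{R(1+\kappa)}$ has diameter $<\kappa R$, one proves by an explicit construction (straight segment plus detours along the outer boundaries of the obstacles, justified by Brouwer's separation theorem and the observation that points of large $E^+$-components must lie in the exterior of every obstacle) that both the $(R,\kappa)$ unique cluster property and the $R$ crossing property hold for $E^+$. That lemma is the heart of the paper's proof, and your proposal replaces it by an assertion you don't prove and whose justification is exactly the same routing argument.

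Your fallback of discretizing to reduce to combinatorial separating surfaces in $\Z^d$ is a plausible alternative route, but it is not what the paper does, and it introduces its own overhead (you need the mesh finer than the correlation length, plus a transfer argument); the paper stays in the continuum, invoking a regularity statement (Lemma A.9 of \cite{quasi_independance}) to ensure the level set is a nice hypersurface meeting $\partial B_R$ transversely, and then applies the deterministic Proposition \ref{prop:determinitic} directly.
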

The strategy to prove Proposition \ref{prop:strong_perco1} is to argue that when $\ell>-\ell_c$ then the set $\{f\leq -\ell\}$ does not percolates (all components are bounded), and to use a deterministic argument that states that it is not possible to observe two huge connected components of $\{f\geq -\ell\}$ without having also a huge connected component in $\{f\leq -\ell\}$.

\subsection{Approximation of the field}
We introduce approximations of the field $f$ that have good properties (finite-range-dependence, piece-wise constant). This relies on the white noise decomposition of the field $f$.

In the following of the paper, we fix an arbitrary function $\chi : \R^d \to [0,1]$ such that
\begin{itemize}
    \item $\chi$ is $\mathcal{C}^\infty$. 
    \item $\chi(x)=1$ for all $x$ such that $\norm{x}{}\leq \frac{1}{4}$.
    \item $\chi(x)=0$ for all $x$ such that $\norm{x}{}\geq \frac{1}{2}.$
\end{itemize}
With this function fixed we can make the following definition.
\begin{definition}
\label{def:fR}
For $r>1$ we define $\chi_r(x) := \chi(x/r)$ and we define the random field $f_r$ as
\begin{equation}
    \label{eq:def_fr}
    f_r := (q\chi_r)\ast W.
\end{equation}
\end{definition}
Since $q\chi_r$ has compact support included in the Euclidean ball of radius $r/2$ we see that the field $f_r$ is \textit{$r$-dependent}, meaning that two events that depend on values of the field $f_r$ at distance at least $r$ are independent.
\begin{definition}
\label{def:discretization}
For $\varepsilon>0$, and a function $g: \R^d \to \R$, we define the \textit{$\varepsilon$-discretization} of $g$ as the function $g^\varepsilon : \R^d \to \R$ defined by
$g^\varepsilon(x) := g(y)$, where $y$ is the unique point in $\varepsilon\Z^d$ such that $x\in y+\left[-\frac{\varepsilon}{2},\frac{\varepsilon}{2}\right[^d.$ 
\end{definition}
We will often write $f_r^\varepsilon$ which should be interpreted as $(f_r)^\varepsilon,$ that is the $\varepsilon$-discretization of the field $f_r$.
It is good to have in mind that the fields $f_r$ and $f_r^\varepsilon$ are good local approximations of the field $f$ as $r$ is big and $\varepsilon$ is small. This can be seen in the following classical proposition which is an application of the Borell-TIS inequality.

\begin{proposition}[see for instance {\cite[Proposition 2.1]{Severo}} ]
\label{prop:severo}
Assume that $q$ satisfies Assumption \ref{a:a1} for some $\beta>d$. There exists a constant $c>0$ such that
\begin{align}
\forall r>1,\quad \forall s>\frac{1}{r^{\beta-\frac{d}{2}}}, \quad \Proba{\sup_{x\in B_1}|f_r(x)-f(x)|\geq s}\leq e^{-cs^2r^{2\beta-d}}. \\
\forall r>1,\quad\forall \varepsilon>0,\quad \forall s>\varepsilon, \quad \Proba{\sup_{x\in B_1}|f_r^\varepsilon(x) -f_r(x)| \geq s} \leq e^{-cs^2/\varepsilon^2}.
\end{align}
Where we recall that $B_1$ denotes $B_1=[-1,1]^d$.
\end{proposition}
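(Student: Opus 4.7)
The plan is to reduce both inequalities to the Borell--TIS concentration inequality applied to an auxiliary centered, continuous Gaussian field on the compact set $B_1$. In each case I need (i) a bound on the pointwise variance, which sets the rate in the Gaussian tail, and (ii) a bound on the expected supremum, which converts Borell--TIS (concentration around the mean) into a tail bound at the level $s$ appearing in the statement.

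For the first inequality, I would set $g_r := f - f_r = (q(1-\chi_r)) \ast W$, a centered stationary Gaussian field on $\R^d$. Since $1-\chi_r$ vanishes on $\{\norm{u}{} \leq r/4\}$, the decay $|q(u)| = O(\norm{u}{}^{-\beta})$ from Assumption \ref{a:a1} gives
\begin{equation*}
    \mathrm{Var}(g_r(0)) = \int_{\norm{u}{}\geq r/4} q(u)^2 (1-\chi_r(u))^2 \, du \leq C\int_{r/4}^\infty \rho^{-2\beta + d - 1}\, d\rho = O(r^{d - 2\beta}),
\end{equation*}
and the analogous computation with $\nabla q$ in place of $q$ (the term where the gradient lands on $\chi_r$ carries an extra factor $1/r$ and is smaller) yields $\mathrm{Var}(\partial_i g_r(0)) = O(r^{d-2\beta})$. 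By translation invariance this upgrades to a Lipschitz bound on the canonical metric $d_{g_r}(x,y) := \sqrt{\mathbb{E}[(g_r(x)-g_r(y))^2]} \leq C r^{-(\beta - d/2)} \norm{x-y}{}$. Dudley's entropy integral on $B_1$ then gives $\mathbb{E}[\sup_{x \in B_1}|g_r(x)|] \leq C' r^{-(\beta - d/2)}$, and Borell--TIS with pointwise variance $\sigma_r^2 = O(r^{-(2\beta - d)})$ yields, for $s \geq 2C' r^{-(\beta-d/2)}$,
\begin{equation*}
    \Proba{\sup_{x \in B_1}|g_r(x)| \geq s} \leq 2\exp\!\Bigl(-\tfrac{(s - C' r^{-(\beta-d/2)})^2}{2\sigma_r^2}\Bigr) \leq e^{-c s^2 r^{2\beta - d}}.
\end{equation*}
The remaining range $r^{-(\beta-d/2)} < s < 2C' r^{-(\beta-d/2)}$ is absorbed by shrinking $c$, since the bound is then trivial.

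For the second inequality, I would write the error as a first-order Taylor remainder. By Definition \ref{def:discretization}, for every $x \in B_1$ the lattice point $y \in \varepsilon\Z^d$ defining $f_r^\varepsilon(x)$ satisfies $\norm{x-y}{\infty} \leq \varepsilon/2$, and since Assumption \ref{a:a1} ensures that $f_r$ is $\mathcal{C}^1$, the mean value inequality gives
\begin{equation*}
    \sup_{x \in B_1}|f_r(x) - f_r^\varepsilon(x)| \leq \tfrac{\sqrt d}{2}\, \varepsilon\cdot \sup_{z \in B_2}\norm{\nabla f_r(z)}{}.
\end{equation*}
Each component $\partial_i f_r = (\partial_i(q\chi_r)) \ast W$ has pointwise variance bounded by a constant uniform in $r > 1$ (using $\partial_i q \in L^2$ and that the $\nabla\chi_r$-contribution is lower order), and the same Dudley argument gives $\mathbb{E}[\sup_{z \in B_2}|\partial_i f_r(z)|] = O(1)$. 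Borell--TIS and a union bound over $i=1,\dots,d$ then yield $\Proba{\sup_{z \in B_2}\norm{\nabla f_r(z)}{} \geq t} \leq C e^{-c t^2}$ for $t$ larger than a constant, and substituting $t = s/(\varepsilon\sqrt d)$ produces the claimed $e^{-c s^2/\varepsilon^2}$ for $s$ larger than a suitable multiple of $\varepsilon$; the narrow range near $s=\varepsilon$ is again trivial after adjusting $c$.

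The main technical point to watch is ensuring that $\mathbb{E}[\sup_{x \in B_1}|g_r(x)|]$ is exactly $O(r^{-(\beta - d/2)})$ without a spurious logarithmic factor in $r$, so that Borell--TIS starts to give content at $s \gtrsim r^{-(\beta - d/2)}$ as required. The Lipschitz control on the canonical metric together with the fact that the $d_{g_r}$-diameter of $B_1$ has the same order of magnitude as the pointwise standard deviation makes Dudley's integral converge without a $\log r$ factor; beyond that check, the whole argument is routine Gaussian calculus.
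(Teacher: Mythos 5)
Your approach is the standard one for this type of estimate and is what underlies the cited \cite[Proposition 2.1]{Severo}: reduce to Borell--TIS concentration, with variance bounds from the explicit white-noise representation of the error fields and an expected-supremum bound from Dudley's entropy integral. The variance calculations for $g_r = (q(1-\chi_r))\ast W$ and for $\partial_i g_r$, the Lipschitz control of the canonical metric, and the observation that the Dudley integral scales like $r^{-(\beta - d/2)}$ without a logarithmic correction are all correct and are exactly the right ingredients; the reduction of the second inequality to a gradient bound via the mean value theorem is likewise the intended route.

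One point deserves more care than your phrase ``the remaining range is absorbed by shrinking $c$'' suggests. For $s$ in the window $r^{-(\beta - d/2)} < s < 2C'r^{-(\beta - d/2)}$, the claimed bound $e^{-cs^2 r^{2\beta - d}}$ is a fixed constant in $(0,1)$, so the bound is \emph{not} trivially true: you need to know that $\Proba{\sup_{B_1}|g_r| \geq r^{-(\beta - d/2)}}$ is bounded away from $1$ \emph{uniformly in $r$}. Borell--TIS alone does not give this when the expected supremum $m_r = \mathbb{E}[\sup_{B_1}|g_r|]$ exceeds $r^{-(\beta - d/2)}$, because the concentration inequality says nothing about the lower tail at levels below $m_r$. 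To close this you need an extra step — for instance, tracking the constants in the Dudley bound explicitly so that $m_r \leq (1-\eta) r^{-(\beta-d/2)}$ for some $\eta > 0$, or a uniform anticoncentration/small-ball estimate for the rescaled family $g_r / r^{-(\beta-d/2)}$. This is a genuine (if routine) gap rather than a matter of adjusting a constant; the same remark applies to the analogous window $\varepsilon < s \lesssim \varepsilon$ in the second inequality.
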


\subsection{Deterministic argument for local uniqueness}
We now present a deterministic argument that provides a criterion to ensure that a set has the local uniqueness property.

\begin{definition}
\label{def:cluster_property}
Let $R>1, \kappa\in ]0,1[$ and recall that $B_R = [-R,R]^d$. Let $E\subset \R^d$ be a subset. We make the following definitions.
\begin{itemize}
    \item $E$ has the \textit{$(R,\kappa)$ small clusters property}, if all the connected components of $E\cap B_{R(1+\kappa)}$ have diameter less than $\kappa R.$
    \item $E$ has the \textit{$(R,\kappa)$ unique cluster property}, if all the connected components of $E\cap B_R$ of diameter greater than $\kappa R$ are connected within $E\cap B_{R(1+\kappa)}$.
    \item $E$ has the $R$ \textit{crossing property}, if for all $i\in \{1,d\}$ there exists a connected component of $E\cap B_R$ that intersects the two faces $F_R^{i,-}$ and $F_R^{i,+}$.
\end{itemize}
In this context the diameter refer to the usual Euclidean diameter.
\end{definition}

We argue that if a set has some kind of small cluster property then its complement in the box has some form of a unique cluster property. The proof we provide next is purely geometrical and deterministic (although we will soon apply it to random sets).
\begin{proposition}
\label{prop:determinitic}
Let $\kappa\in \left]0,1\right[$, $E^-\subset \R^d$ and $E^+\subset \R^d$ be two $\mathcal{C}^1$-differentiable submanifolds of dimension $d$ with boundary (these manifolds don't need to be connected and can have countably many connected components). Suppose also that their boundary is the same $E^0 = \partial E^- = \partial E^+$ and that we can write $\R^d = (E^-\setminus E^0)\sqcup E^0 \sqcup (E^+\setminus E^0)$. Then, if for some $R>1$, $E^0$ only intersects $\partial B_R$ transversely and if $E^-$ has the $(R,\kappa)$ small clusters property, then $E^+$ has the $(R,\kappa)$ unique cluster property and the $R$ crossing property.
\end{proposition}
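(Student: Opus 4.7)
My plan is a deterministic, purely topological contradiction argument in both parts: failure of either the crossing or the uniqueness property for $E^+$ will be shown to force a connected subset of $E^-\cap B_{R(1+\kappa)}$ of diameter at least $\kappa R$, contradicting the $(R,\kappa)$ small clusters hypothesis on $E^-$. The key structural input is that $E^0$ is a smooth two-sided hypersurface separating $\R^d$ into $E^+\setminus E^0$ and $E^-\setminus E^0$, so that any transverse crossing of $E^0$ flips the $+/-$ label; combined with the transversality of $E^0$ with $\partial B_R$, this lets me regard $B_R\setminus E^0$ as a disjoint union of open "rooms" each labeled $+$ or $-$, with adjacent rooms across an interior piece of $E^0$ carrying opposite labels.

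For the crossing property, I fix $i$ and assume for contradiction that no component of $E^+\cap B_R$ meets both $F_R^{i,-}$ and $F_R^{i,+}$. There are two sub-cases. If some component of $E^-\cap B_R$ already meets both faces, that component has diameter at least $2R$ and we are done. Otherwise, no path in $B_R$ from $F_R^{i,-}$ to $F_R^{i,+}$ can avoid $E^0$, since such a path would stay in a single room labeled $+$ or $-$; hence $E^0\cap B_R$ topologically separates the two faces in $B_R$. A standard topological separation theorem in the cube then produces a connected compact subset $\Sigma\subset E^0\cap B_R$ that by itself separates $F_R^{i,-}$ from $F_R^{i,+}$. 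Taking a one-sided tubular neighborhood of $\Sigma$ on the $E^-$ side, well defined since $E^0$ is smooth, yields a connected subset of $E^-\cap B_R$ that also separates the two faces and therefore intersects every $e_i$-parallel segment crossing $B_R$. Its projection onto the directions transverse to $e_i$ is surjective onto $[-R,R]^{d-1}$, so it has Euclidean diameter at least $2R>\kappa R$; being contained in $B_R\subset B_{R(1+\kappa)}$ it contradicts the small clusters property of $E^-$.

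For the uniqueness property, I suppose two components $\mathcal{C}_1,\mathcal{C}_2\subset E^+\cap B_R$ of diameter at least $\kappa R$ belong to two distinct components $\tilde{\mathcal{C}}_1\neq \tilde{\mathcal{C}}_2$ of $E^+\cap B_{R(1+\kappa)}$. Consider the connected component $D$ of $B_{R(1+\kappa)}\setminus \tilde{\mathcal{C}}_1$ containing $\tilde{\mathcal{C}}_2$. One checks that $\partial D\cap B_{R(1+\kappa)}^\circ \subset E^0$ and that the $D$-side local collar of $\partial D$ lies in $E^-$. Because $\tilde{\mathcal{C}}_1$ has diameter at least $\kappa R$, an "outer boundary" argument, extracting the connected component of $\partial \tilde{\mathcal{C}}_1$ facing $D$, yields a connected piece of $E^0\cap B_{R(1+\kappa)}$ of diameter at least $\kappa R$, whose $E^-$-side collar is then a connected subset of $E^-\cap B_{R(1+\kappa)}$ of diameter at least $\kappa R$, again contradicting the small clusters property.

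The main obstacle lies in the topological extraction steps: obtaining a single connected separator from an a priori disconnected separating closed set, and controlling its diameter. In dimension two these steps are standard (Janiszewski's theorem and a planar outer-boundary argument), but in dimension $d\geq 3$ they require more care, typically invoking an Eilenberg-type separation theorem or an Alexander-duality argument, together with a local analysis using the smooth two-sided structure of $E^0$ to guarantee that the extracted connected subset of $E^0$ admits a genuine $E^-$-side tubular collar. The diameter bound in the uniqueness argument is especially delicate, since we need the tighter bound $\kappa R$ rather than the $2R$ that comes for free in the crossing argument.
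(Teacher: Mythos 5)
Your route is a genuine alternative to the paper's: you argue by contradiction, trying to extract from a failure of either property a single connected subset of $E^0$ (and hence, via a one-sided collar, of $E^-$) whose diameter exceeds $\kappa R$. The paper instead argues constructively: it decomposes $E^-\cap B_{R(1+\kappa)}$ into its components $\mathcal{C}_i$, which are \emph{all} small by hypothesis, applies Brouwer's separation theorem only to the compact connected hypersurfaces $S_j^{(i)}$ bounding each $\mathcal{C}_i$ to single out an outer boundary $S_1^{(i)}$, and then routes the straight segment from $x$ to $y$ around each $\mathcal{C}_i$ it hits by following $S_1^{(i)}$. The smallness of $\mathcal{C}_i$, together with the fact that the entry point lies in $B_R$, guarantees that $S_1^{(i)}$ stays inside $B_{R(1+\kappa)}$ and inside $E^0\subset E^+$, so the detour is legal; the same routing idea produces the crossing property.

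The obstacle you honestly flag at the end is the crux, and it is precisely what the paper's approach sidesteps. For the crossing part, extracting a single connected separator $\Sigma\subset E^0\cap B_R$ from an a priori disconnected separating set is a Phragm\'en--Brouwer--type statement; it would have to be formulated and proved for the cube with boundary (where $E^0$ meets $\partial B_R$ transversely), which is not immediate in $d\geq 3$. For uniqueness the gap is sharper: the claim that because $\tilde{\mathcal{C}}_1$ has diameter $\geq\kappa R$ the part of $\partial\tilde{\mathcal{C}}_1$ facing $D$ contains a connected piece of diameter $\geq\kappa R$ is unproved. The boundary $\partial D\cap B^\circ_{R(1+\kappa)}$ may have many components, and isolating one of large diameter would again require a separation/duality argument together with the same kind of interior/exterior analysis via Brouwer's theorem that the paper performs, just applied to the wrong (large) set. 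In the paper's proof the only connected hypersurfaces whose diameter must be controlled are outer boundaries of the \emph{small} $E^-$ components, where the bound $\leq\kappa R$ is immediate from the small clusters hypothesis. To make your proof rigorous you would need to supply these missing topological lemmas; for the uniqueness part it seems you would end up reproducing the paper's routing argument in the complementary set, so I would recommend switching to the direct constructive approach.
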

\begin{proof}
Since the proof is technical we first provide the intuition behind our reasoning, in hopes that it will facilitate the reading of the proof. We also recommend having Figure \ref{fig:deterministic} in mind while reading the proof. The proof essentially relies on two claims. First, let a point $x\in E^+$. We claim that if the connected component of $x$ in $E^+$ has diameter greater or equal to $\kappa R$, then there is no connected component of $E^-$ that "surrounds" this connected component of $x$ (this is due do the fact that any connected component of $E^-$ has small diameter). Second, let us consider two such points $x,y\in E^+$ whose connected components in $E^+$ are of diameter greater or equal than $\kappa R$ and let us try to connect $x$ and $y$ in $E^+$ by a straight segment. This can fail since the segment may encounter connected components of $E^-$. However the first idea ensures that whenever we encounter such an obstacle (a connected component of $E^-$), we are actually coming from the "exterior" of this obstacle hence we can follow along the boundary of this obstacle until we can continue our travel along the segment from $x$ to $y$. Moreover the detours we made by traveling along the boundaries of the obstacles cannot be too big since the obstacles itself are not too big. In the following we will try to formalize such ideas, in particular the notion of interior and exterior can be made rigorous thanks to Brouwer's seperation theorem (see \cite{guillemin2010differential} for instance).

We can write $E^- \cap B_{R(1+\kappa)} = \sqcup_{i=1}^n \mathcal{C}_i$ where the $\mathcal{C}_i$ are connected. Since $E^-$ has the $(R,\kappa)$ small clusters property, we have by definition that for all $i$, $\text{diam}(\mathcal{C}_i) \leq \kappa R$. For each $i$ we can consider the topological boundary of $\mathcal{C}_i$, that is we can write $\partial \mathcal{C}_i = \sqcup_{j=1}^{n_i} S_j^{(i)}$ where the $S_j^{(i)}$ are connected compact hypersurfaces. Because of our hypothesis, all these hypersurfaces are subsets of $E^0\cup \partial B_{R(1+\kappa)}$. The Brouwer's separation theorem (\cite{guillemin2010differential}) allows us to state that each of these compact hypersurfaces $S$ separates the space $\R^d$ into two connected parts, the interior $\text{Int}(S)$ that is bounded and the exterior of $S$ that is unbounded $\text{Ext}(S)$. Without restriction of generality since the $\mathcal{C}_i$ are connected, we can assume that for all $2\leq j \leq n_i$ we have $S_j^{(i)}\subset \text{Int}(S_1^{(i)}).$ This allows us to write $\mathcal{C}_i=\text{Int}(S_1^{(i)})\setminus \bigcup_{j=2}^{n_i} \text{Int}(S_j^{(i)}).$ Denote by $\mathcal{C}$ the set of points $x\in E^+\cap B_R$ such that the connected component of $x$ in $E^+\cap B_R$ is of diameter greater than $\kappa R$. Our goal is to prove that any two points of $\mathcal{C}$ are connected within $E^+ \cap B_{R(1+\delta)}$. For this purpose, consider two points $x$ and $y$ in $\mathcal{C}$. Since $x$ is connected to a point $z$ within $B_{R(1+\kappa)}\cap E^+$ at distance at least $\kappa R$ from $x$, we deduce that $x$ does not belong to any $\text{Int}(S^{(i)}_1)$ (otherwise this would contradict the fact that all $\mathcal{C}_i$ have diameter less than $\kappa R$). By symmetry the same is true for $y$. Now, consider $\gamma$ the segment that joins the two points $x$ and $y$ (the segment stays in $B_R$ by convexity of $B_R$, however it may leave $E^+$). Follow the segment from $x$ to $y$ until we either reach $y$ or reach a point $z_1$ in some $\mathcal{C}_i$. In the later case the previous discussion allow us to state that the point $z_1$ belongs to some $S^{(i)}_1$. We can then look at $\tilde{z_1}$ the last intersection point between $\gamma$ and $S^{(i)}_1$ and replace the segment $[z_1,\tilde{z_1}]$ of the path $\gamma$ by some path in $S^{(i)}_1$ that connects the two points $z_1$ and $\tilde{z_1}$ (see Figure \ref{fig:deterministic} for an illustration). Since $z_1$ is at distance at least $\kappa R$ from the boundary  $\partial B_{R(1+\kappa)}$ and since $\mathcal{C}_i$ is of diameter at most $\kappa R$ we see that $\mathcal{C}_i$ does not intersects $\partial B_{R(1+\kappa)}$. Thus, the outer boundary $S_1^{(i)}$ is included in $E^0$. Using this observation, we see that we can indeed find a path between $z_1$ and $\tilde{z_1}$ in $E^0$ (and also in $B_{R(1+\kappa)}$). By repeating this procedure until we reach $y$, we have connected the two points $x$ and $y$ within $E^+\cap B_{R(1+\kappa)}$, proving that $E^+$ has the $(R,\kappa)$ unique cluster property. Note that $\mathcal{C}$ can be shown to be non empty simply by choosing two points in $B_R\setminus \bigcup_{i=1}^n \text{Int}(S^{(i)}_1)$ at distance $\kappa R$ from one each other and connecting them as done above. This concludes the proof that $E^+$ has he $(R,\kappa)$ unique cluster property. In order to show that $E^+$ has the $R$ crossing property, it is enough for each direction $j\in \{1,\dots,d\}$ to find two points $w^j_1,w^j_2$ such that $w_1^j\in F_R^{j,-}\cap \mathcal{C}$ and $w_2\in F_R^{j,+}\cap \mathcal{C}$, as they will be connected in $B_R\cap E^+$ via $\mathcal{C}$. In order to algorithmically build such the point $w_i^1$, one way is to draw a segment starting from one point in $\mathcal{C}$ towards the center of the face $F_R^{j,-}$. Should the segment encounter one of $\mathcal{C}_i$ that does not intersects $F_R^{j,-}$, we deviate the path by following the boundary of $\mathcal{C}_i$ until we can continue on our initial trajectory by staying in $E^+$. If the segment reaches some $\mathcal{C}_i$ that intersects $F_R^{j,-}$, we deviate our path by following the boundary of $\mathcal{C}_i$ until we reach a point in $F_R^{j,-}$. Otherwise, our segment will simply reach a point in $F_R^{i,-}$. The path we built starts from a point in $\mathcal{C}$ and only visits points in $E^+$. This guarantees that all points visited by the path are in $\mathcal{C}$. Moreover the path eventually visits one point $w_1^j$ in $F_R^{j,-}$. This concludes the proof of the proposition.
\begin{figure}[h]
    \centering
    \includegraphics[width=10cm]{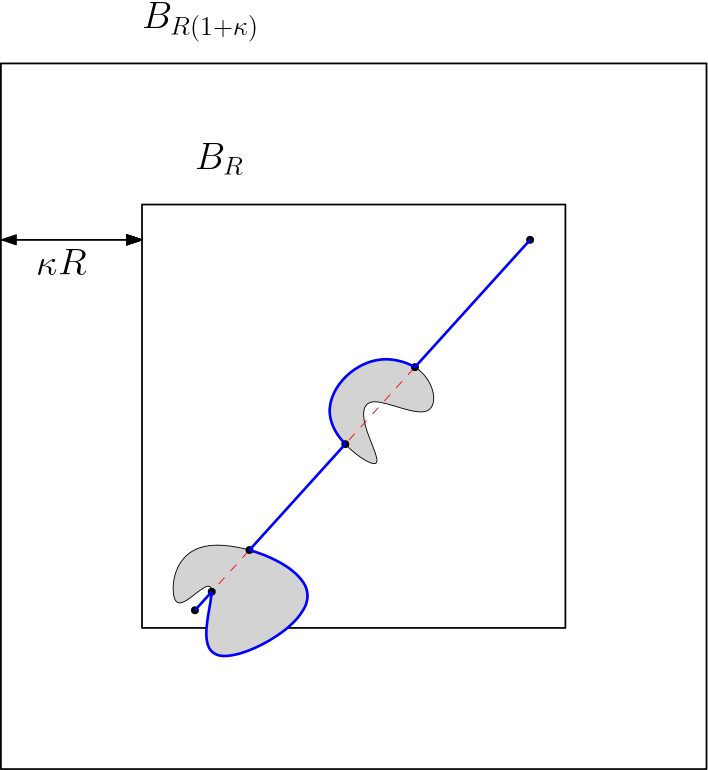}
    \caption{Illustration of the proof of Proposition \ref{prop:determinitic}.}
    \label{fig:deterministic}
\end{figure}
\end{proof}

We conclude by a remark on the definition of the unique cluster property. One may find more natural to define the following uniqueness property:
\begin{definition}
    Say that $E\subset \R^d$ has the \textit{$(R,\kappa)$ strong unique cluster property} if among the connected components of $E\cap B_R$ at most one has diameter greater than $\kappa R$.
\end{definition}
We also believe that an analogue of Proposition \ref{prop:determinitic} with the $(R,\delta)$ unique cluster property replaced by the $(R,\delta)$ strong unique cluster property should hold. However a lot of technicalities are generated due to the possible intersections between the boundary of the box $B_R$ with the boundary of the clusters of $E^-$. In order to avoid dealing these difficulties we prefer to work with the definition of the $(R,\kappa)$ unique cluster property which is enough for our purpose.

\subsection{Proof of Proposition \ref{prop:strong_perco1}}
We now provide a proof of Proposition \ref{prop:strong_perco1}. In fact we will prove the more general result that also concerns the approximation $f_r^\varepsilon$ of the field $f$.
\begin{proposition}
\label{prop:strong_perco}
Assume that $q$ satisfies Assumption \ref{a:a1} for some $\beta>d$.
Let $\ell>-\ell_c$, $\kappa\in ]0,1[$. Then, there exist constants $c,C>0$ such that for all $R>1$, $r>1$, $\varepsilon<1$,
\begin{align}
       \label{eq:local_uniqueness_for_f}
    \Proba{f\in \mathcal{A}(R,\ell,\kappa)} & \geq 1 -CR^de^{-cR}, \\
    \label{eq:local_uniqueness_for_fR}
    \Proba{f_r^\varepsilon\in \mathcal{A}(R,\ell,\kappa)} &\geq 1 -CR^d(e^{-cR}+e^{-cr^{2\beta-d}}+e^{-c\varepsilon^{-2}}).
\end{align}
\end{proposition}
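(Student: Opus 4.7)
The plan is to combine the deterministic Proposition \ref{prop:determinitic} with a subcritical estimate on the dual set $\{f\leq -\ell\}$. The assumption $\ell>-\ell_c$ places the dual precisely in the subcritical regime: by the Gaussian symmetry $f\stackrel{\mathrm{law}}{=}-f$, one has $\{f\leq -\ell\}\stackrel{\mathrm{law}}{=}\{f\geq\ell\}=\mathcal{E}_{-\ell}(f)$, and $-\ell<\ell_c$. I would invoke sharpness of the subcritical phase transition for the Gaussian fields considered here (as established for our class of models in the references already cited in the paper) to obtain, for each fixed $\ell'\in(-\ell_c,\ell]$, constants $c,C>0$ such that $\Prob\!\left(0\overset{f\leq -\ell'}{\longleftrightarrow}\partial B_s\right)\leq Ce^{-cs}$ for all $s\geq 1$.

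\textbf{Proof of \eqref{eq:local_uniqueness_for_f}.}
Covering $B_{R(1+\kappa)}$ by $O(R^d)$ unit balls and union-bounding the one-arm estimate above at their centers, I would conclude that with probability at least $1-CR^de^{-cR}$ every connected component of $\{f\leq -\ell\}\cap B_{R(1+\kappa)}$ has diameter strictly less than $\kappa R$; that is, $\{f\leq -\ell\}$ has the $(R,\kappa)$ small clusters property. Since $f$ is almost surely of class $\mathcal{C}^4$, Bulinskaya's lemma yields that $-\ell$ is a.s.\ a regular value of $f$ and that $\{f=-\ell\}$ meets $\partial B_R$ transversely. Applying Proposition \ref{prop:determinitic} with $E^-=\{f\leq -\ell\}$, $E^+=\{f\geq -\ell\}$, $E^0=\{f=-\ell\}$ then forces both the $(R,\kappa)$ unique cluster property and the $R$ crossing property for $\excur(f)$, i.e.\ $f\in\mathcal{A}(R,\ell,\kappa)$, which gives \eqref{eq:local_uniqueness_for_f}.

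\textbf{Extension to $f_r^\varepsilon$.}
I would fix $\eta>0$ small enough that $\ell-\eta>-\ell_c$, and set $\mathcal{B}:=\{\sup_{x\in B_{R(1+\kappa)}}|f_r^\varepsilon(x)-f(x)|\leq\eta\}$. Covering $B_{R(1+\kappa)}$ by translates of $B_1$ and applying Proposition \ref{prop:severo} inside each box, a union bound gives $\Prob(\mathcal{B}^c)\leq CR^d(e^{-cr^{2\beta-d}}+e^{-c/\varepsilon^2})$. On $\mathcal{B}$ the inclusion $\{f_r^\varepsilon\leq -\ell\}\subset\{f\leq -\ell+\eta\}$ forces every connected component of $\{f_r^\varepsilon\leq -\ell\}\cap B_{R(1+\kappa)}$ to sit inside a component of $\{f\leq -\ell+\eta\}\cap B_{R(1+\kappa)}$; applying the previous paragraph at the level $\ell-\eta$ (still strictly above $-\ell_c$) then transfers the small clusters property to $\{f_r^\varepsilon\leq -\ell\}$ with probability at least $1-CR^de^{-cR}$.

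\textbf{Main obstacle.}
The remaining ingredient is a version of Proposition \ref{prop:determinitic} applicable to $f_r^\varepsilon$, whose level sets are Lipschitz rather than $\mathcal{C}^1$. The cleanest route I would take is to convolve $f_r^\varepsilon$ with a smooth bump supported at scale $\ll\varepsilon$: the smoothed field is $\mathcal{C}^\infty$, its excursion sets coincide with those of $f_r^\varepsilon$ up to a Hausdorff perturbation of scale $\ll\varepsilon$ (so in particular local uniqueness for the smoothing and for $f_r^\varepsilon$ are equivalent up to relabeling $\kappa$), and Proposition \ref{prop:determinitic} applies directly to it. An alternative is to reprove Proposition \ref{prop:determinitic} using only Brouwer's separation theorem, which is valid for any compact topological separating hypersurface, and hence directly for the piecewise-planar level sets of $f_r^\varepsilon$. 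Combining this deterministic extension with the small clusters estimate for $f_r^\varepsilon$ and the bound on $\Prob(\mathcal{B}^c)$ yields \eqref{eq:local_uniqueness_for_fR}; this topological adaptation is the only genuine subtlety, as everything probabilistic reduces to a subcritical one-arm estimate together with Borell-TIS concentration.
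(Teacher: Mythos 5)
Your proposal follows the same path as the paper: derive the $(R,\kappa)$ small clusters property for the dual set $\{f\leq-\ell\}$ (and then for $\{f_r^\varepsilon\leq-\ell\}$ via the sup-norm comparison of Proposition~\ref{prop:severo} and a level shift $\ell\mapsto\ell'\in(-\ell_c,\ell)$) from the subcritical one-arm exponential decay, and feed it into the deterministic Proposition~\ref{prop:determinitic}; this is exactly what the paper packages as Proposition~\ref{prop:high_prob_no_big_diameter} followed by the final gluing step. You correctly flag a subtlety the paper dismisses as ``straightforward to adapt'': the excursion sets of the piecewise constant field $f_r^\varepsilon$ are not $\mathcal{C}^1$ manifolds, so Proposition~\ref{prop:determinitic} does not apply verbatim. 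Of your two proposed repairs, the topological one---noting that Brouwer's separation theorem holds for any compact separating hypersurface, piecewise-linear included, and that the surgery in the proof of Proposition~\ref{prop:determinitic} uses nothing beyond that---is the cleaner and more directly justifiable route; mollification would additionally require verifying generic transversality of the smoothed level set with $\partial B_R$, and care that the mollified excursion set really does track $\{f_r^\varepsilon\geq-\ell\}$ near cube faces where $f_r^\varepsilon$ jumps across $-\ell$.
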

The strategy is to show that the complements of $\excur(f)$ and $\mathcal{E}_\ell(f_r^\varepsilon)$ have the $(R,\kappa)$ small cluster property (see Definition \ref{def:cluster_property}) in order to deduce, via Proposition \ref{prop:determinitic}, that these sets have the $(R,\kappa)$ unique cluster property.
First we recall an estimate about a crossing probability for Gaussian field.
\begin{theorem}[\cite{HA_critical}, \cite{Threshold} for $d=2$, \cite{Severo} for $d\geq 3$]
\label{thm:severo_crossing_prob}
Assume that $q$ satisfies Assumption \ref{a:a1} for some $\beta>d$.
For all $\ell<\ell_c$, there exist constants $c,C>0$ such that for all $R\geq 1,$
\begin{equation}
    \Proba{B_1 \overset{f\geq -\ell}{\longleftrightarrow} \partial B_R}\leq Ce^{-cR}.
\end{equation}
\end{theorem}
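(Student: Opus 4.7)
My plan is to prove exponential decay by reducing to the discretized finite-range approximation $f_r^\varepsilon$ and then applying the randomized-algorithm machinery of Duminil-Copin--Raoufi--Tassion, adapted to Gaussian fields (in the spirit of Muirhead--Vanneuville and Rivera--Vanneuville). The scheme has three steps: (i) derive an OSSS-Russo differential inequality for crossing probabilities of $f_r^\varepsilon$ in the parameter $\ell$; (ii) extract exponential decay below the critical level from this inequality; (iii) transfer the bound from $f_r^\varepsilon$ back to $f$ by way of Proposition \ref{prop:severo}.

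For step (i), fix $\ell<\ell_c$ and, for parameters $r,\varepsilon$ to be tuned in step (iii), set
$$\theta_n(\ell) := \Proba{B_1 \overset{f_r^\varepsilon\geq -\ell}{\longleftrightarrow} \partial B_n}.$$
The white-noise representation realizes the restriction of $f_r^\varepsilon$ to $B_R$ as a function of countably many iid standard Gaussians with $r$-range of dependence. I would then apply the OSSS inequality to an exploration algorithm that picks $n\in [R/2,R]$ uniformly and reveals the $\excur(f_r^\varepsilon)$-cluster of $B_1$ step by step, halting as soon as $\partial B_n$ is reached. A standard revealment bound controls the probability that any given Gaussian input is queried by $O\bigl(R^{-1}\sum_{k=R/2}^{R}\theta_k(\ell)\bigr)$. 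Combined with the Gaussian Russo-type formula expressing $\partial_\ell\Prob(E)$ as the sum of influences of the underlying Gaussians on the increasing event $E$, this yields a differential inequality of the form
$$\theta_R'(\ell)\ \geq\ \frac{c R}{\sum_{k=R/2}^{R}\theta_k(\ell)}\,\theta_R(\ell)\bigl(1-\theta_R(\ell)\bigr).$$

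For step (ii), a standard analysis of this inequality (cf. Duminil-Copin--Raoufi--Tassion) implies that for every $\ell<\ell_c(f_r^\varepsilon)$ the sequence $(\theta_n(\ell))_n$ is summable, hence decays at least polynomially. Using the $r$-range independence of $f_r^\varepsilon$ (and the isotropy coming from Assumption \ref{a:a1}) I would then run a Peierls/renormalization argument at a scale slightly larger than $r$: tile the annulus $B_R\setminus B_1$ by boxes of side $\ell_0\gg r$, call a box good if the restriction of $f_r^\varepsilon$ to a neighborhood of it contains no long $\excur$-connection, and use finite-range independence plus the polynomial bound on $\theta_n$ to upgrade the polynomial decay to exponential decay with rate $c=c(\ell)>0$ independent of $R$.

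Step (iii) is the main technical obstacle and dictates the allowed growth of $r$ and $\varepsilon^{-1}$ in $R$. Given $\ell<\ell_c(f)$, fix $\eta>0$ with $\ell+2\eta<\ell_c(f)$ and let $r$ and $\varepsilon^{-1}$ be small polynomial powers of $R$ chosen so that $r^{2\beta-d}$ and $\varepsilon^{-2}$ both dominate $R$. A union bound over the $O(R^d)$ unit translates of $B_1$ needed to cover $B_R$, applied to Proposition \ref{prop:severo}, gives
$$\Proba{\sup_{x\in B_R}|f(x)-f_r^\varepsilon(x)|\geq \eta}\ \leq\ CR^d\bigl(e^{-c\eta^2 r^{2\beta-d}}+e^{-c\eta^2/\varepsilon^2}\bigr)\ \leq\ Ce^{-cR}.$$
On the complementary event, every $f\geq -\ell$ connection from $B_1$ to $\partial B_R$ is an $f_r^\varepsilon \geq -(\ell+\eta)$ connection; moreover the same coupling shows that $\ell+\eta<\ell_c(f_r^\varepsilon)$, so the hypothesis of step (ii) applies at level $\ell+\eta$. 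Combining the two estimates produces the desired $Ce^{-cR}$ bound for $f$. The hardest point here is to verify that the exponential rate delivered by the OSSS argument in step (ii) is uniform in $r,\varepsilon$ throughout the chosen polynomial regime, so that the transfer in step (iii) does not destroy the exponential nature of the bound.
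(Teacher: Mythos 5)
This theorem is an external input to the paper: it is attributed to \cite{HA_critical}, \cite{Threshold} (for $d=2$) and \cite{Severo} (for $d\geq 3$), and no proof is given in the present text. There is therefore no internal proof to compare your attempt against; the paper simply cites the result and then uses it (via Corollary \ref{cor:severo_crossing_proba}, which is just the symmetry $f\mapsto -f$). Your sketch does reconstruct the broad OSSS/randomized-algorithm strategy that underlies the cited references -- a Duminil-Copin--Raoufi--Tassion type differential inequality for a finite-range white-noise approximation, followed by a transfer back to $f$ using Proposition \ref{prop:severo} -- so the overall route you propose is the one actually taken in the literature.

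That said, as a standalone proof the sketch has a genuine gap, and it is precisely the one you flag at the end. In step (iii) you allow $r$ and $\varepsilon^{-1}$ to grow as powers of $R$, so for the final bound $Ce^{-cR}$ to have $c,C$ independent of $R$, the exponential rate from step (ii) must be uniform over that whole family of approximating fields. The OSSS inequality degrades with the dependence range $r$, and the Peierls/renormalization upgrade from polynomial to exponential decay produces a rate that a priori also depends on $r$; you give no argument for why this rate stays bounded away from $0$ in your regime. Separately, the assertion that $\ell+\eta<\ell_c(f_r^\varepsilon)$ ``by the same coupling'' does not follow from Proposition \ref{prop:severo} alone: $\ell_c$ is an infinite-volume threshold, whereas the coupling controls $\sup_{B_R}|f-f_r^\varepsilon|$ only with high probability on a finite box. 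Controlling both uniformly is exactly the technical content of the cited papers, so your proposal identifies the right framework but does not close it; in the context of this paper the correct move is to treat the statement as a black box and cite it.
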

We will use this theorem in the form of the equivalent corollary (which is just a direct consequence of the equality in law of $f$ and $-f$).
\begin{corollary}
\label{cor:severo_crossing_proba}
Assume that $q$ satisfies Assumption \ref{a:a1} for some $\beta>d$.
Then, for $\ell>-\ell_c$, there exist constant $c,C>0$ such that for all $R\geq 1,$
\begin{equation}
    \Proba{B_1 \overset{f\leq -\ell}{\longleftrightarrow} \partial B_R}\leq Ce^{-cR}.
\end{equation}
\end{corollary}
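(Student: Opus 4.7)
The proof plan is to leverage the symmetry of a centered Gaussian field under the map $f \mapsto -f$. Since $f$ is centered and Gaussian, every finite-dimensional marginal $(f(x_1),\dots,f(x_n))$ is invariant in law under negation; combined with the almost sure continuity of sample paths, this yields an equality in law between the processes $-f$ and $f$ as random continuous functions on $\R^d$.

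The next step is simply to unwrap the shorthand notation introduced just before Definition \ref{def:chemical_distance_random} and re-express the left-hand side as an upper-excursion connection event for $f$ rather than a lower-excursion event for $-f$. By that convention, the event $\{B_1 \overset{f\leq -\ell}{\longleftrightarrow} \partial B_R\}$ is by definition the event $\{B_1 \overset{\mathcal{E}_{-\ell}(-f)}{\longleftrightarrow} \partial B_R\}$ that $B_1$ and $\partial B_R$ belong to a common connected component of
\[
\mathcal{E}_{-\ell}(-f) \;=\; \{x\in\R^d : -f(x)\geq \ell\} \;=\; \{x\in\R^d : f(x)\leq -\ell\}.
\]
Transporting this event through the equality in law $-f \overset{(d)}{=} f$, its probability equals
\[
\Proba{B_1 \overset{\mathcal{E}_{-\ell}(f)}{\longleftrightarrow} \partial B_R} \;=\; \Proba{B_1 \overset{f\geq \ell}{\longleftrightarrow} \partial B_R},
\]
where the last identification uses that $\mathcal{E}_{-\ell}(f) = \{x\in\R^d : f(x) \geq \ell\}$ by the definition of $\mathcal{E}_\ell(\cdot)$.

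To conclude, I would set $\ell' := -\ell$ and apply Theorem \ref{thm:severo_crossing_prob} at the level $\ell'$: the hypothesis $\ell > -\ell_c$ is exactly the admissibility condition $\ell' < \ell_c$ required by the theorem, and the theorem then supplies constants $c,C>0$ such that $\Proba{B_1 \overset{f\geq -\ell'}{\longleftrightarrow} \partial B_R} \leq C e^{-cR}$, which is precisely the bound claimed in the corollary. No step here should pose a genuine obstacle; the statement essentially repackages Theorem \ref{thm:severo_crossing_prob} through the $f \leftrightarrow -f$ symmetry, and the only point demanding care is bookkeeping of the sign conventions in the various level-set notations.
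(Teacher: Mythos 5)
Your proof is correct and follows essentially the same route as the paper: both reduce the statement to Theorem \ref{thm:severo_crossing_prob} by exploiting the symmetry $-f \overset{(d)}{=} f$ of a centered Gaussian field, with the hypothesis $\ell > -\ell_c$ translating to the required condition $-\ell < \ell_c$. The sign bookkeeping in your version is spelled out more explicitly than in the paper's terse argument, but the underlying idea is identical.
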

\begin{proof}
Since $f$ and $-f$ have the same law, because $f$ is a centered Gaussian field, then the two sets
$$\excur(f) =\{x\in \R^d\  |\ f(x)\geq -\ell\} \text{ and}$$
$$\excur(-f)=\{x\in \R^d\ |\ -f(x)\geq -\ell\}$$ have the same law. The later set can be written as $\{x\in \R^d | f(x) \leq -(-\ell)\}$. Since taking $\ell<\ell_c$ is equivalent to taking $-\ell>-\ell_c$, the corollary holds.
\end{proof}
We use this estimate to show that, with high probability, the box $B_R=[-R,R]$ does not contain a path of $\{f\leq -\ell\}$ with high diameter when $\ell>-\ell_c$.

\begin{proposition}
\label{prop:high_prob_no_big_diameter}
Assume that $q$ satisfies Assumption \ref{a:a1} for some $\beta>d$.
Then, for all $\ell>-\ell_c$ and $\kappa\in ]0,1[$, there exist constants $c,C>0$ such that, for all $R>1$, $r>1$,  $\varepsilon<1$,
\begin{align}
    \label{eq:high_prob_no_big_diameter}
      & \Proba{
     \left\{x\in \R^d\ |\  f_r^\varepsilon(x)\leq -\ell\right\}\text{ has  the } (R,\kappa) \text{ small clusters property}
     }
     \nonumber\\
    \geq \quad &1-CR^d(e^{-cR}+e^{-c\varepsilon^{-2}}+e^{-cr^{2\beta-d}}).
\end{align}
Moreover, if we replace $f_r^\varepsilon$ by $f$ we get the following:
\begin{align}
    \label{eq:high_prob_no_big_diameter_f}
     &\Proba{\{x\in \R^d\ |\  f(x)\leq -\ell\}\text{ has  the } (R,\kappa) \text{ small clusters property}} \nonumber\\
    \geq \quad & 1-CR^de^{-cR}.
\end{align}
\end{proposition}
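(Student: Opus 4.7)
I would prove (\ref{eq:high_prob_no_big_diameter_f}) for $f$ first and then reduce (\ref{eq:high_prob_no_big_diameter}) to it via the approximation bounds of Proposition \ref{prop:severo}. For the pure-field statement, the key geometric observation is that a failure of the $(R,\kappa)$ small clusters property for $\{f\leq-\ell\}$ forces the existence of a connected component of $\{f\leq-\ell\}\cap B_{R(1+\kappa)}$ of diameter at least $\kappa R$, and any such component, starting from any of its points $x$, contains another point at Euclidean distance at least $\kappa R/2$ from $x$, hence a path in the component reaching $L^\infty$-distance at least $c_\kappa R$ from $x$ for some explicit constant $c_\kappa=c_\kappa(d,\kappa)>0$. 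Covering $B_{R(1+\kappa)}$ by the $O(R^d)$ unit translates $v+B_1$ with $v\in\Z^d$, the failure event is therefore contained in
\[
\bigcup_{v}\bigl\{(v+B_1)\overset{f\leq-\ell}{\longleftrightarrow}\partial(v+B_{c_\kappa R})\bigr\}.
\]
Stationarity of $f$ together with Corollary \ref{cor:severo_crossing_proba} (applicable since $\ell>-\ell_c$) bounds each event by $Ce^{-cR}$, and a union bound over the $O(R^d)$ choices of $v$ yields (\ref{eq:high_prob_no_big_diameter_f}).

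For the approximation (\ref{eq:high_prob_no_big_diameter}), I would fix $s:=(\ell+\ell_c)/2>0$, so that $\ell-s>-\ell_c$ still lies in the range where Corollary \ref{cor:severo_crossing_proba} applies, and use the pointwise inclusion
\[
\{f_r^\varepsilon(x)\leq-\ell\}\subset\{f(x)\leq-(\ell-s)\}\cup\{|f(x)-f_r^\varepsilon(x)|\geq s\}.
\]
Any connecting path in $\{f_r^\varepsilon\leq-\ell\}$ therefore either realises a crossing in $\{f\leq-(\ell-s)\}$, bounded by $Ce^{-cR}$ exactly as before, or forces $\sup_{v+B_{c_\kappa R}}|f-f_r^\varepsilon|\geq s$. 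I would control the latter by splitting via $|f-f_r^\varepsilon|\leq|f-f_r|+|f_r-f_r^\varepsilon|$ and applying the two estimates of Proposition \ref{prop:severo} (with $s$ replaced by $s/2$) stationarily over a cover of $v+B_{c_\kappa R}$ by unit boxes, producing a bound of shape $CR^d(e^{-cr^{2\beta-d}}+e^{-c\varepsilon^{-2}})$ with constants $c,C$ that depend on the fixed value of $s$ but not on $R,r,\varepsilon$. A final union bound over the $O(R^d)$ choices of $v$ then yields (\ref{eq:high_prob_no_big_diameter}).

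The main piece of technical care is the book-keeping of the polynomial $R$-prefactors: the nested union bounds naively produce an $R^{2d}$ in front of the two approximation exponentials, which must be absorbed into the single $R^d$ of the target bound. This is handled by the standard device of discarding the parameter regime where the stated right-hand side already exceeds $1$ (so the estimate is trivial), and in the remaining regime using that for any $\nu>0$ one has $R^d\leq e^{\nu r^{2\beta-d}}$ and $R^d\leq e^{\nu\varepsilon^{-2}}$ whenever $r$ and $\varepsilon^{-1}$ are large enough in terms of $\log R$, at the cost of slightly shrinking the exponential rate $c$. Apart from this accounting, the proof is a direct combination of the deterministic reduction ``large diameter component implies long crossing'' with Corollary \ref{cor:severo_crossing_proba} and Proposition \ref{prop:severo}.
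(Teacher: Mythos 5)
Your proposal is correct and is essentially the paper's argument: prove the statement for $f$ by reducing the failure of the $(R,\kappa)$ small clusters property to a one-arm event from a unit box to the boundary of a box of radius comparable to $\kappa R$, union over $O(R^d)$ unit boxes using stationarity and Corollary \ref{cor:severo_crossing_proba}, and then reduce the $f_r^\varepsilon$ case to the $f$ case at a strictly smaller level using the $L^\infty$-approximation of Proposition \ref{prop:severo}. Your checks are right (the pointwise inclusion is correct, and $\ell-s=(\ell-\ell_c)/2>-\ell_c$).

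The one place you diverge, and where you correctly flag extra work, is the order of the two union bounds in the approximation step. You apply the sup-norm comparison separately inside each of the $O(R^d)$ crossing events, which nests two $O(R^d)$ union bounds and produces an $R^{2d}$ prefactor that must then be absorbed into $e^{-c r^{2\beta-d}}$, $e^{-c\varepsilon^{-2}}$ by the trivial-bound/shrink-the-rate device. The paper avoids this entirely by decoupling: it bounds $\Proba{\sup_{B_{R(1+\kappa)}}|f-f_r^\varepsilon|>|\ell-\ell'|}\leq CR^d(e^{-cr^{2\beta-d}}+e^{-c\varepsilon^{-2}})$ with a \emph{single} $O(R^d)$ union bound, and then observes that on the complement of this event one has the deterministic implication ``\,$\{f\leq-\ell'\}$ has the $(R,\kappa)$ small clusters property $\Rightarrow$ $\{f_r^\varepsilon\leq-\ell\}$ has it too,'' so the $f_r^\varepsilon$ statement reduces to the already-proved $f$ statement by a three-term union bound and no extra polynomial factor appears. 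Your absorption argument is valid (if $R^d\le e^{(c_1/2)r^{2\beta-d}}$ the factor is absorbed at rate $c_1/2$; otherwise the target bound exceeds $1$ and is trivial, and likewise for $\varepsilon$), so the proof goes through, but the decoupled version is cleaner and is what the paper does. One last small point you could make explicit: Proposition \ref{prop:severo} requires $s>r^{-(\beta-d/2)}$ and $s>\varepsilon$; the complementary regime of small $r$ or large $\varepsilon$ is handled by enlarging $C$, since there $e^{-cr^{2\beta-d}}$ or $e^{-c\varepsilon^{-2}}$ is bounded below.
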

\begin{proof}
Fix $\ell'\in \R$ such that $\ell>\ell'>-\ell_c$.
Cover the box $B_{R(1+\kappa)} = [-(1+\kappa)R,(1+\kappa)R]^d$ with $N\leq CR^d$ boxes of size $1$ (where $C$ is a constant depending only on the dimension $d$) and denote $b_1,b_2,\dots,b_N\in \R^d$ the centers of all those boxes. On the event that $\{f\leq -\ell'\}$ does not have the $(R,\kappa)$ small clusters property, we can find $x$ and $y$ in $B_{R(1+\kappa)}$ such that $x$ and $y$ are distant of at least $\kappa R$ and such that $x$ and $y$ both are in a same connected component of $\{f\leq -\ell'\}$. Let $i$ be one index such that $x\in b_i+[-1,1]^d$. As soon as $R$ is big enough (compared to $\frac{1}{\kappa}$ and $d$), we have $y\not\in b_i+\left[-\frac{\kappa R}{2\sqrt{d}},\frac{\kappa R}{2\sqrt{d}}\right]^d$. Hence on the event that $\{f\leq -\ell'\}$ does not have the $(R,\kappa)$ small clusters property, we have
$$\exists 1\leq i \leq N,\quad b_i+[-1,1]^d\overset{f\leq -\ell'}{\longleftrightarrow}\partial\left(b_i+\left[-\frac{\kappa R}{2\sqrt{d}},\frac{\kappa R}{2\sqrt{d}}\right]^d\right).$$
Applying stationarity (to assume $b_i=0$) and Corollary \ref{cor:severo_crossing_proba}, we can do an union bound on all possible $i$ to get
\begin{align}
    & \Proba{\{f\leq -\ell'\} \text{ does not have the }(R,\kappa)\text{ small clusters property}}\nonumber\\
    \leq\quad &\sum_{i=1}^N \Proba{b_i+[-1,1]^d \overset{f\leq -\ell'}{\longleftrightarrow}\partial\left(b_i+\left[-\frac{\kappa R}{2\sqrt{d}},\frac{\kappa R}{2\sqrt{d}}\right]^d\right)} \nonumber\\
    \leq\quad& C'R^de^{-c'R},
\end{align}
where $c'$ and $C'$ are constants that only depend on $\ell$, $\kappa$ and $d$.
This yields the conclusion \eqref{eq:high_prob_no_big_diameter_f} for the field $f$. For the field $f_r^\varepsilon$, we know by local comparison (see Proposition \ref{prop:severo}) that for $r$ big enough and $\varepsilon$ small enough (compared to $|\ell-\ell'|$), we have a constant $c>0$ depending on the law of $f$ and on $|\ell-\ell'|$ such that
\begin{equation}
    \Proba{\sup_{B_1}|f-f_r^\varepsilon|>|\ell-\ell'|}\leq e^{-cr^{2\beta-d}}+ e^{-c\varepsilon^{-2}}.
\end{equation}
Hence, using again an union bound we see that
\begin{equation}
    \Proba{\sup_{B_{R(1+\kappa)}}|f-f_r^\varepsilon|>|\ell-\ell'|}\leq C R^d(e^{-cr^{2\beta-d}}+ e^{-c\varepsilon^{-2}}).
\end{equation}
Now, if $\sup_{B_{R(1+\kappa)}}|f-f_r^\varepsilon|<|\ell-\ell'|$ then for all $x\in B_{R(1+\kappa)}$ we see that
$$f_r^\varepsilon(x)\leq -\ell \Rightarrow f(x)\leq -\ell'.$$
Thus, if $\sup_{B_{R(1+\kappa)}}|f-f_r^\varepsilon|<|\ell-\ell'|$ and if $\{x\in \R^d\ |\ f(x)\leq -\ell'\}$ has the $(R,\kappa)$ small cluster property, then so does $\{x\in \R^d\ |\ f_r^\varepsilon(x)\leq -\ell\}$. By an union bound we conclude:
\begin{align*}
& \Proba{\{x\in \R^d\ |\  f_r^\varepsilon(x)\leq -\ell\}\text{ does not have  the } (R,\kappa) \text{ small clusters property}} \\
\leq\quad & \Proba{\{x\in \R^d\ |\  f(x)\leq -\ell'\}\text{ does not have  the } (R,\kappa) \text{ small clusters property}} \\
& + \Proba{\sup_{B_{R(1+\kappa)}}|f-f_r^\varepsilon|>|\ell-\ell'|} \\
\leq\quad& C R^d(e^{-cr^{2\beta-d}}+ e^{-c\varepsilon^{-2}})+CR^de^{-cR}.
\end{align*}
Adjusting constants we get \eqref{eq:high_prob_no_big_diameter}.
\end{proof}

Finally, we are ready to prove that when $\ell>-\ell_c$ the local uniqueness event $\mathcal{A}(R,\ell,\kappa)$ has high probability.
\begin{proof}[Proof of Proposition \ref{prop:strong_perco}]
We will do the proof for $f$, the proof is straightforward to adapt to $f_r^\varepsilon$.
Denote $$E^+ := \{x\in \mathbb{R}\ |\ f(x)\geq -\ell\},$$
$$E^- := \{x\in \R^d\ |\ f(x)\leq -\ell\},$$
$$E^0 := \{x\in \R^d\ |\ f(x)=-\ell\}.$$
We also fix $R>1$.
Applying Lemma A.9 of \cite{quasi_independance}, we see that almost surely, the sets $E^+, E^-, E^0$ satisfy the hypotheses of Proposition \ref{prop:determinitic} for $B_R$. Hence, whenever $E^-$ has the $(R,\kappa)$ small clusters property, then $E^+$ will have the $(R,\kappa)$ unique cluster property and $R$ crossing property. It only remains to apply our estimates of Proposition \ref{prop:high_prob_no_big_diameter}, to see that both events $\{f\in \text{Unique}(R,\ell,\kappa)\}$ and $\{f\in \text{Exist}(R,\ell)\}$ have probability no less than $1 -CR^de^{-cR}.$
\end{proof}
\begin{remark}
The way we proved that the existence event $\text{Exist}(R,\ell)$ has high probability may seem unsatisfactory. We argue that it is possible to also derive this result very easily for all $\ell>0$ simply by considering the percolation model induced in two-dimensional planes and use the results in \cite{Threshold} to show that in the plane rectangle are crossed with high probability. Also, since the event $f\in \text{Exist}(R,\ell)$ is an increasing connection event, the machinery developed by Severo in \cite{Severo} together with Theorem 3.1 in \cite{Pisztora1996} might allow to prove this result for all $\ell>\ell_c$, but a lot remains to be done in this direction.
\end{remark}

\section{Stochastic domination argument}
\label{sec:3}
In this section we introduce several tools for a renormalization argument. The idea is that we want to build a path of adjacent boxes into a renormalized lattice where in each box, the field $f_r^\varepsilon$ will satisfy the local uniqueness event (see Definition \ref{def:local_uniqueness_event}). In order to do so, we first recall a classical stochastic domination theorem. This theorem allows us to compare a probability measure $\mu$ on $\{0,1\}^{\Z^d}$ with \textit{high marginals} and \textit{finite range correlation} with a classical Bernoulli process of high parameter. By \text{high marginals}, we mean that if $(\omega_i)\in \{0,1\}^{\Z^d}$ is sampled according to $\mu$, then for all $i\in \Z^d$ the probability of $\omega_i$ being $1$ is close to $1$ (uniformly in $i$). Typically one can have in mind the law of a configuration $\omega\in \{0,1\}^{\Z^d}$ where $\omega_i=1$ if and only if the field $f_r^\varepsilon$ satisfy the local uniqueness event in the box $\frac{r}{10}i+B_r$. Then we introduce the notion of \textit{global structure} as some path in the renormalized lattice that is of controllable length and comes close to two selected points. We argue that such structures appear with high probability. This last part relies on arguments developed in \cite{Newman} and \cite{Antal}. 

\subsection{Comparison with a Bernoulli process of high parameter}

We briefly recall the notion of stochastic domination for two measures on $\{0,1\}^{\Z^d}$ and then state a classical stochastic domination result.


First we make the following definition.
\begin{definition}
Consider a countable set $V$, and two probability measures $\mu_1$ and $\mu_2$ on $\left(\{0,1\}^V, \mathcal{B}(\{0,1\})^{\otimes V}\right)$. We say that $\mu_1$ is stochastically dominated by $\mu_2$ and we note $\mu_1 \prec \mu_2$ if there exist a coupling $(X^1,X^2)$ of law $\mu$ where $X^1$ has law $\mu_1$ and $X^2$ has law $\mu_2$ and where $$\mu\left(\forall i\in V,\   X^1_i \leq X^2_i\right)=1.$$
\end{definition}

\begin{definition}
Let $d\geq 2$ and $p\in [0,1]$, denote by $\pi_p$ the law on $\{0,1\}^{\Z^d}$ given by independent and identically distributed Bernoulli random variables of parameter $p$.
We immediately have $$p\leq p'\Rightarrow \pi_p \prec \pi_{p'}.$$
\end{definition}
We also recall the notion of measures that are finite-range dependent.
\begin{definition} Let $M\in \mathbb{N}$. A probability measure $\mu$ on $\{0,1\}^{\Z^d}$ is said to be \textit{$M$-dependent}, if for all subsets $A,B$ of $\Z^d$ such that $d_\infty(A,B)\geq M$ the two $\sigma$-algebras $\mathcal{F}_A:= \sigma(\omega_i, i\in A)$ and $\mathcal{F}_B := \sigma(\omega_i, i\in B)$ are $\mu$-independent. Here and in the following $d_\infty$ denotes the metric induced by the sup-norm, meaning that $d_\infty(A,B)=\min_{a,b\in A\times B}\max_{1\leq i \leq d}{|a_i-b_i|}.$
\end{definition}
A classical and general tool to compare a probability measure $\mu$ with some $\pi_p$ is the following theorem. Basically the idea is that a measure $\mu$ with finite-range dependence stochastically dominates a Bernoulli process $\pi_{1-\alpha}$ with small $\alpha>0$ as soon as the marginals of $\mu$ are close enough to $1$.
\begin{theorem}[\cite{LSS}, \cite{Antal}]
\label{prop:stoch:p2}
For any $M\in \mathbb{N}$, $d\geq 1$ and $p\in [0,1]$, there exist a quantity $\alpha(p,M,d)\in [0,1]$ such that 
\begin{equation}
    \label{eq:non_triviality}
    \alpha(p,M,d) \xrightarrow[p\to 1]{}0,
\end{equation}
and such that for any law $\mu$ on $\{0,1\}^{\Z^d}$ satisfying
\begin{itemize}
    \item $\mu$ is $M$-dependent (finite range dependence),
    \item $\inf_{i\in \Z^d}\mu(\omega_i=1)\geq p$ (high marginals),
\end{itemize}
we have
\begin{equation}
    \pi_{1-\alpha(p,d,M)}\prec \mu.
\end{equation}
\end{theorem}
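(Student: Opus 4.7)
The plan is to prove the domination by constructing an explicit coupling via sequential revelation. First, by the Kolmogorov extension theorem it suffices to treat finite boxes $V \subset \Z^d$, since stochastic dominations on each finite $V$ glue together into a coupling on all of $\Z^d$. Fix such a $V$, enumerate its sites $v_1, v_2, \ldots, v_n$ (lexicographically, say), and introduce i.i.d.\ uniforms $(U_k)_{k \leq n}$ on $[0,1]$. Inductively define
\begin{equation*}
X^1_k := \mathbbm{1}_{\{U_k \leq q_k\}}, \qquad q_k := \mu\bigl(\omega_{v_k}=1 \,\bigm|\, X^1_1, \ldots, X^1_{k-1}\bigr),
\end{equation*}
so that $X^1 \sim \mu$ by construction. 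On the same uniforms set $X^2_k := \mathbbm{1}_{\{U_k \leq 1 - \alpha\}}$, yielding an i.i.d.\ Bernoulli$(1-\alpha)$ family, and note that $X^2 \leq X^1$ pointwise as soon as $q_k \geq 1 - \alpha$ almost surely for every $k$.

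The proof thus reduces to the conditional lower bound
\begin{equation*}
\mu\bigl(\omega_{v_k}=1 \,\bigm|\, \omega_{v_j}=x_j,\ j<k\bigr) \geq 1 - \alpha(p, M, d),
\end{equation*}
uniform in $k$ and in every compatible configuration $(x_1, \ldots, x_{k-1})$, for some $\alpha(p, M, d)$ vanishing as $p \to 1$. Here the $M$-dependence is decisive: since $\sigma(\omega_{v_k})$ is independent of $\sigma(\omega_j : d_\infty(j, v_k) \geq M)$, the conditional probability depends only on those $x_j$ for which $v_j$ lies in the punctured cube of sites at $d_\infty$-distance at most $M-1$ from $v_k$, a finite collection of at most $N := (2M-1)^d - 1$ sites.

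The heart of the argument is the quantitative control of how much this finite conditioning can degrade the marginal bound $\mu(\omega_{v_k}=1) \geq p$, and this is the main obstacle: conditioning on an unlikely configuration of the $N$ relevant neighbors could in principle drive $q_k$ well below $p$. I would handle it by the Liggett--Schonmann--Stacey induction: set $\alpha_0 := 1 - p$ and peel off the conditioned coordinates one at a time, so that after conditioning on $j$ neighbors the remaining bound is $q_k \geq 1 - \alpha_j$. At each step Bayes' formula combined with the marginal lower bound yields a recursion $\alpha_{j+1} = h(\alpha_j)$ for an explicit function $h$ satisfying $h(x) \to 0$ as $x \to 0$. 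After at most $N$ steps this produces $\alpha(p, M, d) := \alpha_N$, which inherits the property $\alpha(p, M, d) \to 0$ as $p \to 1$. Feeding this $\alpha$ back into the coupling above delivers $\pi_{1-\alpha(p,M,d)} \prec \mu$, completing the proof.
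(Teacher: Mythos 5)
Your sequential-revelation coupling and the reduction to a uniform conditional lower bound
\begin{equation*}
\mu\bigl(\omega_{v_k}=1 \,\bigm|\, \omega_{v_j}=x_j,\ j<k\bigr) \geq 1 - \alpha
\end{equation*}
are correct and standard, and you correctly localize the difficulty to the finite neighborhood of radius $M$. The gap is in the claimed recursion. When you peel off a conditioned neighbor $v_j$ by Bayes, write
\begin{equation*}
\mu\bigl(\omega_{v_k}=0 \,\bigm|\, \omega_{v_j}=x_j,\ \Omega\bigr)
  = \frac{\mu\bigl(\omega_{v_k}=0,\ \omega_{v_j}=x_j \,\bigm|\, \Omega\bigr)}{\mu\bigl(\omega_{v_j}=x_j \,\bigm|\, \Omega\bigr)},
\end{equation*}
where $\Omega$ collects the previously revealed coordinates. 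If $x_j=1$ the denominator is at least $1-\alpha_{j-1}$ by the inductive hypothesis and the recursion closes. But if $x_j=0$ the denominator is $\mu(\omega_{v_j}=0\mid\Omega)\leq \alpha_{j-1}$, a quantity for which there is no lower bound whatsoever: it can be arbitrarily close to $0$. Bounding the numerator by $\alpha_{j-1}$ then only gives the trivial estimate $\leq 1$, and no function $h$ with $h(x)\to 0$ as $x\to 0$ emerges. This is exactly the obstruction you flag in words ("conditioning on an unlikely configuration could drive $q_k$ well below $p$"), but the proposed Bayes induction does not overcome it; the recursion $\alpha_{j+1}=h(\alpha_j)$ you posit simply does not exist.

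What the Liggett--Schonmann--Stacey and Antal--Pisztora arguments actually do to fix this is qualitatively different. One first \emph{thins} the field, replacing $X$ by $Y_i := X_i S_i$ with $S\sim\pi_s$ independent of $X$; now $\nu:=\mathrm{Law}(Y)\prec\mu$ automatically, and the point is that under $\nu$ every conditioning event $\{\omega_{v_j}=0\}$ has conditional probability at least $1-s>0$ thanks to the independent coin, which restores a uniform lower bound on the Bayes denominator. The remaining circularity (bounding $\nu(\omega_i=0\mid\cdot)$ in terms of the same quantity at nearby sites) is broken not by an induction on the number of conditioned sites, but by an interpolation $\eta^\rho := \rho\nu + (1-\rho)\pi_s$ together with a continuity/intermediate-value argument in $\rho$: one shows the supremal conditional probability $\beta(\rho)$ satisfies a dichotomy (either very small or bounded away from zero), is continuous in $\rho$, and is small at $\rho=0$, hence small at $\rho=1$. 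Without the thinning step and this interpolation device (or an equivalent), the conditional bound you need cannot be established, so the proof as written does not go through.
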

\begin{remark}
Note that the fact that $\alpha(p,M,d)$ can be as small as we wish as soon as $p$ get close to $1$ is the core statement of the claim, otherwise we could just take $\alpha(p,M,d)\equiv 1$ since we always have $\pi_0 \prec \mu$, but of course this is not what we are interested in. Furthermore, an analysis of the proofs in \cite{Antal} says that we can always choose $\alpha(p,d,M)\leq 4(1-p)^\frac{1}{(2M+1)^d}$.
\end{remark}

\subsection{Global structures}
We introduce some terminology related to percolation on discrete graphs and we also define the notion of \textit{global structure} that will be used in the proof of Theorem \ref{thm:theoreme_principal}.

\begin{definition}
    Consider $A\subset \Z^d$.
    We say that two sites $x,y\in A$ are connected (resp. $\star$-connected) within $A$ if one can find a finite sequence $x_0,\dots,x_n$ of points of $A$ such that
    \begin{itemize}
        \item $x_0 = x$, $x_n = y$,
        \item $\forall 0\leq i \leq n-1,\ \norm{x_{i+1}-x_{i}}{1}=1$ (resp. $\norm{x_{i+1}-x_{i}}{\infty}=1$).
    \end{itemize}
    We say that $A$ is connected (resp. $\star$-connected) if any two points in $A$ are connected (resp $\star$-connected) within $A$.
\end{definition}
\begin{remark}
 Note that one should not confound the notion of being connected (as a subset of $\Z^d$) with the notion of being connected (as a subset of $\R^d$). In the rest of this section, the above definition takes precedence.
\end{remark}
In the following, let $\omega\in \{0,1\}^{\Z^d}$ be a configuration.
We now state the definition of a \textit{global structure} around two points.
\begin{definition}
\label{def:global_structure}
    Let $x\in \Z^d\setminus \{0\}$ and $C_0,\delta>0$ be constants.
    A set $G\subset \Z^d$ is said to be a \textit{global structure} around $0$ and $x$ with constants $C_0,\delta$ if the following is satisfied
    \begin{itemize}
        \item $\forall y \in G,\ \omega_y=1$,
        \item $|G|\leq C_0\norm{x}{1},$
        \item $G$ is connected,
        \item $d(0,G)\leq \log^{1+\delta}(\norm{x}{1})$, and
        \item $d(x,G)\leq \log^{1+\delta}(\norm{x}{1})$.
    \end{itemize}
    We also define $\mathcal{G}(x,C_0,\delta)$ the event that there exists a global structure around $0$ and $x$ with constants $C_0,\delta$.
\end{definition}
In this section we prove the following result
\begin{proposition}
\label{prop:newman}
For all $d\geq 2$, there exists $0<\alpha_d<1$ such that the following holds. For all $\delta>0$ there exist constants $c,C,C_0>0$ such that, if $p\in [1-\alpha_d,1]$, we have
\begin{equation}
    \forall x\in \Z^d \setminus\{0\},\ \pi_p\left(\mathcal{G}(x,C_0,\delta)\right)>1-Ce^{-c\log^{1+\delta}(\norm{x}{1})}.
\end{equation}
\end{proposition}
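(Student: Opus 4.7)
The plan is to combine a local reachability estimate at both endpoints with the chemical-distance estimate of Antal--Pisztora (Theorem \ref{thm:antal}). Choose $\alpha_d\in(0,1)$ small enough that $1-\alpha_d>p_c(d)$, where $p_c(d)$ is the critical threshold of Bernoulli site percolation on $\Z^d$; then for $p\in[1-\alpha_d,1]$ there is almost surely a unique infinite open cluster $\mathcal{C}_\infty$ under $\pi_p$. Fix $\delta>0$ and set $R:=\lceil\log^{1+\delta}(\norm{x}{1})\rceil$; we may assume $\norm{x}{1}$ is so large that $R\geq 1$ (smaller $\norm{x}{1}$ are absorbed into $C$).

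The first step is a local reachability estimate. For $z\in\Z^d$, let $E_z$ denote the event that there exists $y\in\Z^d$ with $\norm{y-z}{\infty}\leq R/2$, with $y\in\mathcal{C}_\infty$, and such that $y$ is connected to $\mathcal{C}_\infty$ by an open path staying inside the box $\{u\in\Z^d:\norm{u-z}{\infty}\leq R\}$. I claim that $\pi_p(E_z^c)\leq e^{-cR}$ for some $c=c(p,d)>0$. For $p$ close enough to $1$, this follows from Pisztora's renormalization in the supercritical phase, or more directly from the facts that the density $\theta(p)$ of $\mathcal{C}_\infty$ approaches $1$ as $p\to 1$ and that, by independence, the probability that no open site in a box of side $R$ is connected to $\mathcal{C}_\infty$ through a surrounding box is exponentially small in $R$. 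Applying this at $z=0$ and $z=x$, on $E_0\cap E_x$ we obtain $y_0$ near $0$ and $y_x$ near $x$ both belonging to $\mathcal{C}_\infty$; by uniqueness of $\mathcal{C}_\infty$ they are connected in the open configuration.

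Since $y_0\overset{\mathcal{G}_p(\omega)}{\longleftrightarrow}y_x$ with $\norm{y_0-y_x}{1}\leq \norm{x}{1}+dR$, Theorem \ref{thm:antal} combined with stationarity yields $\rho=\rho(p,d)>0$ and $c'>0$ such that
\begin{equation*}
\pi_p\left(y_0\overset{\mathcal{G}_p(\omega)}{\longleftrightarrow}y_x\text{ and } d_\chem^{\mathcal{G}_p(\omega)}(y_0,y_x)>\rho(\norm{x}{1}+dR)\right)=O(e^{-c'\norm{x}{1}}).
\end{equation*}
On the intersection of these good events, let $G\subset\Z^d$ be the vertex set of a geodesic path from $y_0$ to $y_x$ in the open configuration. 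Then $G$ consists of open sites, is connected in $\Z^d$, has $|G|\leq\rho(\norm{x}{1}+dR)+1\leq C_0\norm{x}{1}$ for $C_0:=\rho+1$ when $\norm{x}{1}$ is sufficiently large, and satisfies $d(0,G)\leq R$ and $d(x,G)\leq R$ by construction. Hence $G$ is a global structure with constants $C_0,\delta$.

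A union bound over $E_0^c$, $E_x^c$, and the Antal--Pisztora failure event gives $\pi_p(\mathcal{G}(x,C_0,\delta)^c)\leq 2e^{-cR}+O(e^{-c'\norm{x}{1}})\leq Ce^{-c\log^{1+\delta}(\norm{x}{1})}$ after adjusting constants, as desired. The main obstacle is the local reachability estimate $\pi_p(E_z^c)\leq e^{-cR}$: it is not enough for $\mathcal{C}_\infty$ to merely come close to $0$ and $x$, one needs it to be reachable by open paths staying inside logarithmic boxes, which is exactly what forces the choice $p\geq 1-\alpha_d$ in the deep supercritical regime.
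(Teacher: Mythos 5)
Your proof is correct in outline but takes a genuinely different route from the paper. The paper follows \cite{Pisztora1996}: it fixes a deterministic $\ell^1$-path $x_0,\dots,x_N$ from $0$ to $x$, considers the closed $\star$-clusters $\mathcal{C}_i$ meeting that path, shows via a Newman--Schulman-type estimate that the union $A=\bigcup_i\overline{\mathcal{C}_i}$ has total size $O(N)$ with exponentially high probability, and via a Peierls argument that every $\mathcal{C}_i$ has diameter $\leq\log^{1+\delta}(\norm{x}{1})$; the global structure is then taken to be the external open boundary of $A$, a connected open set surrounding the segment. Your argument instead locates a site $y_0\in\mathcal{C}_\infty$ near $0$ and $y_x\in\mathcal{C}_\infty$ near $x$ (this is where you need $p>p_c$, which is easier than the paper's use of Newman--Schulman which needs $p$ close to $1$), then uses the Antal--Pisztora chemical-distance bound (Theorem \ref{thm:antal}) to produce a geodesic of size $O(\norm{x}{1})$, which serves as $G$. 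This is slicker but imports a stronger black box, and conversely the paper's construction is more elementary and self-contained; interestingly, your version actually applies to all $p>p_c(d)$, not just $p$ near $1$, which is strictly more than the proposition asks for. Two small points to tighten: (1) you apply Theorem \ref{thm:antal} with $y_0,y_x$ random, which is not directly licit; you should union-bound over the $O(R^{2d})$ possible pairs of endpoints in the two boxes, which still gives $O(R^{2d}e^{-c\norm{x}{1}/2})$ and is harmless; and (2) the definition of a global structure requires $d(0,G)\leq\log^{1+\delta}(\norm{x}{1})$ with no absorbing constant, so the initial box size $R/2$ should be taken slightly smaller (say $R/(2\sqrt d)$) depending on the metric used, rather than hoping a factor of $\sqrt d$ can be hidden.
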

\begin{proof}
We follow the argument in \cite{Pisztora1996}. Consider a deterministic path of length $N= \norm{x}{1}$ in $\mathbb{Z}^d$ joining $0$ and $x$. We denote this path by $x_0,x_1,\dots,x_N$ with $x_0=0$, $x_N=x$ and $\norm{x_{i+1}-x_{i}}{1}=1$ for all $0\leq i \leq N-1$.
For each $0\leq i \leq N$ we can consider the biggest $\star$-connected component of $\{j\in \Z^d\ |\ \omega_j=0\}$ that contains $i$ (that is, the union of all sets that simultaneously are $\star$-connected, contain $i$ and only contain vertices $j$ such that $\omega_j=0$). Denote this $\star$-connected component by $\mathcal{C}_i$ (note that if $\omega_i=1$ then $\mathcal{C}_i = \emptyset$). For a set $\mathcal{C}\subset \Z^d$ we define the \textit{boundary} of $\mathcal{C}$ as $$\partial \mathcal{C} := \{i\in \Z^d\ |\ i\not\in \mathcal{C}\text{ and }\exists j\in \mathcal{C}, \norm{i-j}{\infty}=1\}.$$
We define $\overline{\mathcal{C}_i}$ as
$$\overline{\mathcal{C}_i}:=\begin{cases}
    \{i\} & \text{ if }\mathcal{C}_i=\emptyset \\
   \mathcal{C}_i \cup \partial{\mathcal{C}_i} &\text{otherwise.}
\end{cases}$$
We also define
$$A:=\bigcup_{0\leq i \leq N}\overline{\mathcal{C}_i}.$$
Using the arguments developed in \cite{Newman} (see also section 4 in \cite{Antal}), it is known that there exists $\alpha_d>0$ such that for $1>p>1-\alpha_d$ we have constants $C_0,c>0$ such that
\begin{equation}
    \pi_p(|A|>C_0 N)\leq \exp(-cN).
\end{equation}
We further claim that with high probability, $A$ is contained in a $\log^{1+\delta}(\norm{x}{1})$ neighborhood of the path $x_0,x_1,\dots,x_N$.
More precisely, a classical Peierls argument shows that, if $p>1-\frac{1}{3^d}$, there exists a constant $c>0$ such that for all $n\geq 0$
\begin{equation}
    \label{eq:peierls}
    \pi_p\left(|\mathcal{C}_0|>n\right)\leq \exp(-cn).
\end{equation}
Denote by $\mathcal{N}$ the event that $A$ is contained in a $\log^{1+\delta}(\norm{x}{1})$ neighborhood of the path $x_0,\dots,x_N$, that is $\mathcal{N}$ is the event $$\{\forall a \in A\ ,\  \exists 0\leq i \leq N\ ,\  \norm{a-x_i}{1}\leq \log^{1+\delta}(\norm{x}{1})\}.$$
Thus choosing $n$ of order $\log^{1+\delta}(\norm{x}{1})$ in \eqref{eq:peierls} and doing an union bound for all clusters $\mathcal{C}_i$ for $0\leq i \leq N$, we obtain (adjusting constants)
\begin{equation}
    \pi_p(\mathcal{N})\leq Ce^{-c\log^{1+\delta}(\norm{x}{1})}.
\end{equation}
Finally notice that if $G$ denotes the external boundary of $A$, then $G$ is a set of open sites which is connected and that surrounds the segment $x_0,\dots,x_N$, hence the conclusion.
\end{proof}

\section{Local control of the chemical distance and proof of the main theorem}
\label{sec:4}
In this section we propose an argument to control the probability of having an unusual high chemical distance in a box. We will then use these estimates together with the result of the previous section to give the proof of Theorem \ref{thm:theoreme_principal}.

\subsection{Local control of the chemical distance around a point}

We introduce some notations that were defined in \cite{vernotte2023chemical}. The definitions were made for the two dimensional case, but they straightforwardly adapt to a higher dimension setting.

First, we recall Definition \ref{def:chemical_distance_random} of the chemical distance and we make the following definition.
\begin{definition}
\label{def:chem_diameter}
Let $\mathcal{E}$ and $\mathcal{C}$ be two subsets of $\R^d$ 
The \textit{chemical diameter} of $\mathcal{C}$ within $\mathcal{E}$ is defined as
\begin{equation}
    \label{eq:def_chem_diameter}
    \diam^\mathcal{E}_\chem(\mathcal{C}):= \sup\{d^\mathcal{E}_\chem(x,y)\ |\ x,y\in \mathcal{C}\}.
\end{equation}
\end{definition}
Since we will be interested in a set that may have several connected components we make the following definition.
\begin{definition}
    \label{def:SR}
    Let $s>1$ and $\mathcal{E}\subset \R^d$ be a subset. We assume that $\mathcal{E}\cap B_s$ has finitely many connected components denoted by $\mathcal{C}_1\dots, \mathcal{C}_n$.
    We define
    \begin{equation}
        \label{eq:defSR}
        S(s,\mathcal{E}):=\max_{1\leq i \leq n} \diam^\mathcal{E}_\chem(\mathcal{C}_i).
    \end{equation}
    In the case where $\mathcal{E}\cap B_s$ has infinitely many connected components, we arbitrarily define $S(s,\mathcal{E})=\infty.$
\end{definition}
Our claim is the following:
\begin{proposition}
\label{prop:local} Assume that $q$ satisfies Assumption \ref{a:a1} for some $\beta>d$.
Then, for all $\ell\in \R$, for all $\eta>0$, there exist constants $C,c,\kappa, \kappa'>0$ such that
\begin{equation}
    \forall s>10,\  \forall x>10,\  \Proba{S(s,\excur(f))\geq x}\leq C\frac{s^\kappa}{x^{\kappa'}}+ e^{-c s^\eta}.
\end{equation}
\end{proposition}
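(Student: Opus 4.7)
The plan is to split $\{S(s,\excur(f))\geq x\}$ according to a regularity event for $f$ on the enlarged box $B_{s+1}$. Introduce two bad events, parameterized by $\delta\in(0,1)$ and $M\geq 1$ to be optimized later:
\begin{align*}
\mathcal{B}_1(s,\delta) & := \left\{\exists\, p \in B_{s+1}\ :\ |f(p)+\ell|\leq \delta \text{ and } \norm{\nabla f(p)}{}\leq \delta\right\}, \\
\mathcal{B}_2(s,M) & := \left\{\norm{f}{\mathcal{C}^2(B_{s+1})} > M\right\}.
\end{align*}
On the complement $\mathcal{B}_1^c\cap \mathcal{B}_2^c$, the level set $\Sigma:=\{f=-\ell\}\cap B_{s+1}$ is a $\mathcal{C}^2$-hypersurface with $\norm{\nabla f}{}>\delta$ on $\Sigma$ and principal curvatures uniformly bounded by $CM/\delta$ via the quantitative implicit function theorem.

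Next I would estimate both probabilities. Borell-TIS applied to $\norm{f}{\mathcal{C}^2(B_{s+1})}$, whose mean is finite thanks to $q\in \mathcal{C}^{10}$, yields a bound of the form $\Proba{\mathcal{B}_2(s,M)}\leq C\exp(-cM^2)$ up to logarithmic corrections in $s$. For $\mathcal{B}_1(s,\delta)$, observe that the Gaussian vector $(f(p),\nabla f(p))\in\R^{d+1}$ has non-degenerate covariance (block-diagonal by isotropy, with non-zero diagonal entries by non-triviality of $q$), so its density is bounded and
$$\Proba{|f(p)+\ell|\leq \delta,\ \norm{\nabla f(p)}{}\leq \delta}\leq C\delta^{d+1}$$
for each fixed $p$. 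A union bound over a $\delta/M$-net of $B_{s+1}$, using the Lipschitz control of $\nabla f$ on $\mathcal{B}_2^c$, then yields
$$\Proba{\mathcal{B}_1(s,\delta)\cap \mathcal{B}_2(s,M)^c}\leq C(sM/\delta)^d\cdot \delta^{d+1} = Cs^dM^d\delta.$$

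On the good event $\mathcal{B}_1^c\cap\mathcal{B}_2^c$ I would then establish a deterministic bound of the form $S(s,\excur(f))\leq Cs(M/\delta)^{d-1}$. Given any two points $u,v$ in a common connected component of $\excur(f)\cap B_s$, I would follow the Euclidean segment $[u,v]$ until it crosses $\Sigma$ transversally, detour along $\Sigma$ around the adjacent connected component of $\{f<-\ell\}$, and continue. The curvature bound $M/\delta$ provides length bounds for each detour via a Gauss-Bonnet-type argument, and a volume-packing argument in $B_s$ bounds the total number and individual size of the obstacles. To conclude, I would choose $M=s^{\eta/3}$ so that $\Proba{\mathcal{B}_2(s,M)}\leq e^{-cs^\eta}$ for $s$ large enough, and choose $\delta$ of order $sM\cdot x^{-1/(d-1)}$ so that the deterministic bound $Cs(M/\delta)^{d-1}$ is less than $x$. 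Substituting into the bound for $\mathcal{B}_1$ then yields $\Proba{\mathcal{B}_1(s,\delta)\cap \mathcal{B}_2(s,M)^c}\leq Cs^\kappa/x^{\kappa'}$ for explicit exponents $\kappa,\kappa'>0$. Since $S(s,\excur(f))<x$ on $(\mathcal{B}_1\cup \mathcal{B}_2)^c$, the proposition follows.

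The main obstacle I anticipate is the deterministic step: one must prove uniformly that the total length of the detours along $\Sigma$, aggregated over every component of $\{f<-\ell\}$ that the segment $[u,v]$ crosses, is controlled by a polynomial in $s$ and $M/\delta$, with constants independent of the endpoints. This is precisely the quantitative implicit function theorem argument flagged in the introduction as the reason one obtains a merely polynomial (rather than super-polynomial) decay in the main theorem.
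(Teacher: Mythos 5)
Your overall skeleton matches the paper's: decompose into a $\mathcal{C}^2$-regularity event plus a non-degeneracy event for the level set, estimate both probabilities (Borell--TIS, resp.\ Kac--Rice/density-type bounds), run a quantitative implicit-function-theorem argument on the good event to get a deterministic path-length bound, and finally tune the two parameters against each other. That much is the same.

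Where you diverge — and where the real gap sits — is precisely at the deterministic step, which you yourself flag. The paper does \emph{not} bound detour lengths by a ``Gauss--Bonnet-type argument.'' That phrase is suspect in $d\geq 3$: Gauss--Bonnet relates integrated curvature to Euler characteristic, and a bound on principal curvatures of the level hypersurface does not, by itself, give a bound on the length of a path that wanders around an obstacle on that hypersurface (a thin tentacle with small curvature can have enormous circumference). What the paper does instead is more combinatorial: it covers the tubular neighbourhood $F$ of $\{f=-\ell\}\cap B_s$ and its complement $G$ by $O((s/\varepsilon)^d)$ boxes of side $\varepsilon$, where $\varepsilon$ is the scale produced by the quantitative IFT. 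By the IFT, each box meeting $F$ is split by $\{f=-\ell\}$ into exactly two graph pieces, and each box in $G$ is entirely inside one phase, so inside any single box two points of $\excur(f)$ can be joined by a detour of length $O(\varepsilon)$. Then one follows the existing continuous path from $u$ to $v$ and, crucially, arranges so that each box is used \emph{at most once}, giving the bound $S\leq C N\varepsilon = O(s^d/\varepsilon^{d-1})$ with no curvature or packing argument at all. Your volume-packing idea is in the right spirit (it is really the same $N\lesssim (s/\varepsilon)^d$ cover), but the length control per obstacle is supplied by the IFT box geometry, not by curvature; without the ``each box visited once'' combinatorial device you do not obtain a uniform total-length bound, and this is the one genuine missing ingredient in your writeup. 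A second, minor, difference: the paper's non-degeneracy estimate is stated as $\Proba{\exists p\in B_s: f(p)=-\ell,\ \|\nabla f(p)\|<\lambda}\leq C\lambda^{1/\kappa_1}s^{\kappa_2}$ (Lemma 5.3 of \cite{BG16}), which is used as a black box rather than rederived by a density-plus-net argument; your version with the thickened event $|f(p)+\ell|\leq\delta$ and the net would need a little care to reproduce the same conclusion, but this is an inessential detail.
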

The estimate in Proposition \ref{prop:local} is pretty weak, we believe the same result to hold with a much faster decay but we could not find a proof of such a result. However, this estimate will only be used two times in two small boxes of logarithmic scale.
In order to prove Proposition \ref{prop:local}, we will show that the box $B_s$ can be filled by small boxes of size $\varepsilon$ such that, with high probability, for each such small box $B$, the level set $\{f=-\ell\}$ either avoids $B$ or cuts it into only two parts. This property can be interpreted in some sense as the level set $\{f=-\ell\}$ not being too much tortuous. Our argument is inspired by the work in \cite{BG16}. We state a few results from \cite{BG16} that we will use in the proof of Proposition \ref{prop:local}. First, note that if $A\subset \R^d$ is a set and $f\in \mathcal{C}^2(\R^d,\R)$, we use the Euclidean nature of $\R^d$ to confound the first derivative $df$ with the gradient $\nabla f$ and the second derivative $d^2 f$ with the Hessian matrix $H(f)$. We also define $\norm{d f}{\mathcal{C}^1(A)} := \sup_{x\in A}\norm{\nabla f(x)}{}+\sup_{x\in A}\norm{H(f)(x)}{}.$

\begin{proposition}[Quantitative implicit function theorem]
\label{prop:implicit}
Let $f : \R^d\to \R$ be a $\mathcal{C}^2$ function. Take $U\subset \R^d$ an open set and $x\in U$, and positive constants $k>\lambda >0$. Assume that the following is satisfied
\begin{itemize}
    \item $f(x)=0,$
    \item $x+[-1,1]^d\subset U,$
    \item $\norm{\nabla f(x)}{}>\lambda,$
    \item $\norm{df}{\mathcal{C}^1(U)}\leq k.$
\end{itemize}
Then, if $\varepsilon = \frac{\lambda^2}{4k^2d^{3/2}}$ there exists a $\mathcal{C}^2$ function $\phi : \mathbb{R}^{d-1} \to \mathbb{R}$, and a direction $i\in \{1,\dots,d\}$ such that
$$\forall y\in x+[-\varepsilon,\varepsilon]^d, f(y)=0 \Leftrightarrow y_i = \phi(y_1,\dots,\widehat{y_i},\dots,y_d).$$
\end{proposition}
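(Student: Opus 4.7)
The plan is to carry out the classical implicit function theorem carefully, tracking the quantitative dependence on $\lambda$ and $k$. The two ingredients I would use are: a pigeonhole argument to identify a preferred coordinate direction $i$, and a mean-value control based on the Hessian bound to keep $\partial_i f$ strictly positive throughout the cube $x+[-\varepsilon,\varepsilon]^d$.

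First I would use that $\norm{\nabla f(x)}{}^2 = \sum_{j=1}^{d}(\partial_j f(x))^2 > \lambda^2$ to extract an index $i\in\{1,\dots,d\}$ with $|\partial_i f(x)|>\lambda/\sqrt{d}$. Assume without loss of generality that $\partial_i f(x)>0$. The bound $\norm{df}{\mathcal{C}^1(U)}\leq k$ provides in particular $\sup_U\norm{Hf}{}\leq k$, so for every $y\in x+[-\varepsilon,\varepsilon]^d$,
$$|\partial_i f(y)-\partial_i f(x)|\leq k\,\norm{y-x}{}\leq k\sqrt{d}\,\varepsilon.$$
Plugging in $\varepsilon=\lambda^2/(4k^2 d^{3/2})$ gives $k\sqrt{d}\,\varepsilon=\lambda^2/(4kd)$, which is at most $\lambda/(2\sqrt{d})$ since $\lambda<k\leq 2k\sqrt{d}$. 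Consequently $\partial_i f(y)\geq \lambda/(2\sqrt{d})$ on the whole cube, and the section $t\mapsto f(y_1,\dots,t,\dots,y_d)$ is strictly increasing on $[x_i-\varepsilon,x_i+\varepsilon]$ with derivative bounded below by $\lambda/(2\sqrt{d})$, for every choice of the remaining coordinates.

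From this point the construction of $\phi$ is immediate. Strict monotonicity in $y_i$ implies that, for each horizontal position $y'=(y_j)_{j\neq i}$ with $y_j\in[x_j-\varepsilon,x_j+\varepsilon]$, there is at most one $t\in[x_i-\varepsilon,x_i+\varepsilon]$ with $f(y_1,\dots,t,\dots,y_d)=0$; where such a $t$ exists I would define $\phi(y'):=t$. The classical implicit function theorem (available since $\partial_i f\neq 0$ on the cube and $f\in\mathcal{C}^2$) then yields the regularity $\phi\in\mathcal{C}^2$ on its domain, via the usual formula $\partial_j\phi=-\partial_j f/\partial_i f$. To produce a function $\phi:\R^{d-1}\to\R$ as required by the statement, one extends $\phi$ smoothly to all of $\R^{d-1}$, making sure that on the complementary horizontal positions the extension takes values strictly outside $[x_i-\varepsilon,x_i+\varepsilon]$; this automatically preserves the biconditional $f(y)=0\Leftrightarrow y_i=\phi(y')$ for every $y$ in the cube.

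The main technical point is step two: one must verify that the specific value $\varepsilon=\lambda^2/(4k^2 d^{3/2})$ is indeed small enough for the strict monotonicity to persist over the entire cube, and the quadratic dependence on $\lambda/k$ reflects the interplay between the lower bound $\lambda/\sqrt{d}$ coming from pigeonhole and the Lipschitz control on $\nabla f$ coming from $\norm{Hf}{}\leq k$. Once this quantitative bound is in place, no further shrinking of the domain is necessary and the rest of the argument reduces to the standard implicit function theorem.
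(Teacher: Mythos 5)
Your proof is correct and follows essentially the same route as the paper: pigeonhole to pick the coordinate $i$ with $|\partial_i f(x)|>\lambda/\sqrt d$, then the mean-value/Hessian bound to keep $\partial_i f$ bounded below by $\lambda/(2\sqrt d)$ on a small cube. The only difference is that after this step the paper invokes Theorem~A.2 of~\cite{BG16} (a packaged quantitative implicit function theorem) with $M\leq\sqrt d/\lambda$ and $C\leq k$, whereas you construct $\phi$ by hand from the strict monotonicity of $t\mapsto f(y',t)$ together with the classical implicit function theorem. Both give the same $\varepsilon=\lambda^2/(4k^2d^{3/2})$. The one point you pass over quickly is the extension of $\phi$ to all of $\R^{d-1}$ with values outside $[x_i-\varepsilon,x_i+\varepsilon]$; it does work (by monotonicity the complement of the natural domain of $\phi_0$ splits into $\{f(\cdot,x_i-\varepsilon)>0\}$, where $\phi_0$ approaches $x_i-\varepsilon$ at the boundary, and $\{f(\cdot,x_i+\varepsilon)<0\}$, where it approaches $x_i+\varepsilon$, so the extension can be chosen consistently below the interval on the first set and above it on the second), but it is worth stating since a naive Whitney extension would not automatically avoid the interval.
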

\begin{proof}
Since we have $\norm{\nabla f(x)}{}>\lambda$ we can find a $i\in \{1,\dots,d\}$ such that
$$\left|\frac{\partial f}{\partial x_i}(x)\right|\geq \frac{\lambda}{\sqrt{d}}.$$
Fix $\delta= \frac{\lambda}{2dk}$ (note that we have $\delta \leq 1$ by hypothesis), by the mean value theorem we have for all $y\in x+[-\delta,\delta]^d$
$$\left|\frac{\partial f}{\partial x_i}(x)-\frac{\partial f}{\partial x_i}(y)\right|\leq \norm{x-y}{}\norm{df}{\mathcal{C}^1(U)}\leq \sqrt{d}\delta k=\frac{\lambda}{2\sqrt{d}}.$$
Dividing by $\left|\frac{\partial f}{\partial x_i}(x)\right|$ we obtain
$$\forall y \in x+[-\delta,\delta]^d, \left|1-\left(\frac{\partial f}{\partial x_i}(x)\right)^{-1}\frac{\partial f}{\partial x_i}(y)\right|\leq \frac{1}{2}.$$
We can apply Theorem A.2 in \cite{BG16} with a $M\leq \frac{\sqrt{d}}{\lambda}$ and $C\leq k$ which directly implies our claim for
$\varepsilon = \frac{\delta}{2MC}\geq \frac{\lambda^2}{4d^{3/2}k^2}$.
\end{proof}
We now state some well-known estimates about the behaviour of the derivative of the random field $f$ in a box.

\begin{proposition}
\label{prop:no_degeneracy}
Assume that $q$ satisfies Assumptions \ref{a:a1} for some $\beta>d$ and let $\ell \in \R$.
Then, there exist constants $c, C, \kappa_1, \kappa_2>0$ such that for all $u\geq 0$ and $s\geq 10$,
\begin{equation}
    \label{eq:control_norm}
    \Proba{\norm{f}{\mathcal{C}^2(B_s)}\geq C\log^{\frac{1}{2}}(s)+u} \leq \exp\left(-c\frac{u^2}{2}\right),
\end{equation}
\begin{equation}
    \label{eq:no_degeneracy}
    \Proba{\exists x \in B_s\  |\  f(x)=-\ell \text{ and } \norm{\nabla f(x)}{}<\lambda} \leq C\lambda^{\frac{1}{\kappa_1}} s^{\kappa_2}.
\end{equation}
\end{proposition}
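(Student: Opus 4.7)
The plan is to prove the two estimates essentially independently, as they rely on rather different tools, and then combine them only in the argument for the second one.

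For the concentration bound \eqref{eq:control_norm}, I would argue as follows. Under Assumption \ref{a:a1} the field $f$ is almost surely $\mathcal{C}^4$, and each of $f$, the partial derivatives $\partial_i f$, and the second partials $\partial_i\partial_j f$ is a centered, stationary, continuous Gaussian field with variance bounded by a constant that depends only on derivatives of $q\ast q$ at $0$. For any such auxiliary field $g$, a standard Dudley-type entropy bound (using stationarity and the $\mathcal{C}^4$ regularity to control the intrinsic metric on $B_s$) gives $\mathbb{E}\bigl[\sup_{B_s}|g|\bigr]\leq C\sqrt{\log s}$ for $s\geq 10$. The Borell--TIS inequality then produces $\Proba{\sup_{B_s}|g|\geq C\sqrt{\log s}+u}\leq \exp(-cu^2/2)$. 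A union bound over the finitely many relevant fields, absorbing constants, yields \eqref{eq:control_norm}.

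For the anti-concentration bound \eqref{eq:no_degeneracy}, I plan a cover-by-small-boxes argument combined with a pointwise Gaussian density bound. First, by \eqref{eq:control_norm} applied with, say, $u=s^{\eta}$, the event $\{\|f\|_{\mathcal{C}^2(B_s)}>K\}$ with $K:=C\sqrt{\log s}+s^{\eta}$ has super-polynomially small probability in $s$, so we can work on its complement. Cover $B_s$ by cubes of side $\delta:=\lambda/(K\sqrt{d})$; write $N=O((s/\delta)^d)$ for the number of cubes and $y_1,\ldots,y_N$ for their centers. If the event in \eqref{eq:no_degeneracy} occurs, some $x$ with $f(x)=-\ell$ and $\|\nabla f(x)\|<\lambda$ lies in the cube centered at some $y_j$, and a Taylor expansion controlled by $K$ gives $|f(y_j)+\ell|\leq \lambda$ and $\|\nabla f(y_j)\|\leq 2\lambda$. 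Hence
\begin{equation*}
\bigl\{\exists x\in B_s,\ f(x)=-\ell,\ \|\nabla f(x)\|<\lambda\bigr\}\cap\{\|f\|_{\mathcal{C}^2(B_s)}\leq K\}\ \subset\ \bigcup_{j=1}^N A_j,
\end{equation*}
where $A_j:=\{|f(y_j)+\ell|\leq \lambda,\ \|\nabla f(y_j)\|\leq 2\lambda\}$. The vector $(f(y_j),\nabla f(y_j))\in\R^{d+1}$ is centered Gaussian with covariance matrix independent of $y_j$ by stationarity and equal (up to signs) to the block diagonal $\operatorname{diag}\!\bigl((q\ast q)(0),\,-H(q\ast q)(0)\bigr)$; under Assumption \ref{a:a1} this matrix is positive-definite, so the density is bounded by some $\rho_\infty<\infty$. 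Since the target region has Lebesgue volume $O(\lambda^{d+1})$, we get $\Proba{A_j}\leq C\lambda^{d+1}$. A union bound yields $N\cdot C\lambda^{d+1}= O\!\bigl(s^{d}K^{d}\lambda\bigr)=O(s^{\kappa_2}\lambda)$ once $K$ is polynomial in $\log s$ or $s^{\eta}$, and adding the super-polynomial error from $\{\|f\|_{\mathcal{C}^2(B_s)}>K\}$ completes the bound with, say, $\kappa_1=1$ and $\kappa_2=d(1+\eta)$ (one may take $\lambda\leq 1$ without loss of generality, adjusting $C$ otherwise).

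The main obstacle I anticipate is the non-degeneracy of the covariance matrix of $(f(y_0),\nabla f(y_0))$ uniformly in $y_0$. By stationarity this is a single statement, and it reduces to checking that $-H(q\ast q)(0)$ is positive-definite and $(q\ast q)(0)>0$. This follows from $q\geq 0,\ q\not\equiv 0$ together with the $L^2$ integrability of $\nabla q$ (so that the spectral density $|\widehat{q}|^2$ has strictly positive integral weighted by $|\xi|^2$ in every direction), which are all guaranteed by Assumption \ref{a:a1}. The remainder of the argument is a quantitative Morse-type estimate in the same spirit as the implicit-function application in \cite{BG16}.
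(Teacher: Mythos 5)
The paper disposes of this proposition in one line, citing the Borell--TIS inequality and Lemma 5.2 of \cite{BG16} for \eqref{eq:control_norm} and Lemma 5.3 of \cite{BG16} for \eqref{eq:no_degeneracy}, so any honest write-up will look different from the ``official'' proof. Your strategy is the right one and matches what the cited lemmas do: Dudley entropy plus Borell--TIS for \eqref{eq:control_norm}, and a covering argument at scale $\lambda/K$ combined with a Gaussian density bound for \eqref{eq:no_degeneracy}. Your verification that $(f(y_0),\nabla f(y_0))$ has a non-degenerate, block-diagonal covariance (using evenness of $q\ast q$, $\|q\|_{L^2}^2>0$, and isotropy/non-triviality of the spectral density for the gradient block) is correct and is exactly the point one must check.

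There is, however, one genuine gap in the way you handle the error term in \eqref{eq:no_degeneracy}. You fix a threshold $K=C\sqrt{\log s}+s^{\eta}$ independently of $\lambda$, pay a Borell--TIS error $\exp(-cs^{2\eta})$, and then claim that ``adding the super-polynomial error completes the bound''. But the target inequality has no additive term: it asserts a bound of the pure form $C\lambda^{1/\kappa_1}s^{\kappa_2}$ for all $\lambda$. When $\lambda$ is much smaller than $\exp(-cs^{2\eta})$ (which does occur in the application, where $\lambda$ scales like a negative power of the independent parameter $x$), your additive error dominates and the claimed bound fails. The fix is to let $K$ depend on $\lambda$ as well as $s$: choose $u$ of order $\sqrt{\log s+\log(1/\lambda)}$ in \eqref{eq:control_norm}, so that the Borell--TIS error is itself bounded by $\lambda^{a}s^{-b}$ for suitable $a,b>0$; the resulting $K$ is only polylogarithmic in $s$ and $1/\lambda$, and the covering count $s^dK^d/\lambda^d$ times $\lambda^{d+1}$ still gives $s^{\kappa_2}\lambda$ up to polylog factors, which is absorbed by taking $\kappa_1$ slightly larger than $1$ (e.g.\ $\kappa_1=2$). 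With that adjustment your argument is complete.
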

\begin{proof}
Concerning \eqref{eq:control_norm}, this is an application of the Borell-TIS inequality, see also Lemma 5.2 in \cite{BG16} for a similar estimate. For the proof of \eqref{eq:no_degeneracy}, this is a restatement of Lemma 5.3 in \cite{BG16}.
\end{proof}
Since the proof of Proposition \ref{prop:local} requires covering various sets with a controllable number of small boxes of size $\varepsilon$ we introduce the following definition.
\begin{definition}
    Let $A\subset \R^d$ and $\varepsilon>0$. A set $\mathcal{R}\subset A$ is called a \textit{$\varepsilon$-covering} of $A$ if
    $$A\subset \bigcup_{x\in \mathcal{R}}B(x,\varepsilon),$$
    where $B(x,\varepsilon)$ denotes the open box $B(x,\varepsilon) := x + ]-\varepsilon,\varepsilon[^d$.
    We also define 
    \begin{equation}
        N(A,\varepsilon):= \inf\{|\mathcal{R}|\ |\ \mathcal{R} \text{ is a }\varepsilon\text{-covering of }A\}.
    \end{equation}
    Note that $N(A,\varepsilon)$ possibly can be infinite (when $A$ is unbounded).
\end{definition}
\begin{lemma}
\label{lemma:covering}
If $A\subset B\subset \R^d$ and $\varepsilon>0$ then
$$N(A,2\varepsilon)\leq N(B,\varepsilon).$$
\end{lemma}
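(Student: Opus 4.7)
The plan is to start from a minimal $\varepsilon$-covering of $B$ and turn it into a $2\varepsilon$-covering of $A$ with no more points. The only subtlety is that, by the definition given in the paper, an $\varepsilon$-covering of a set $A$ must be a subset of $A$ itself, so one cannot simply re-use the covering of $B$ directly; one has to shift each center that is useful into $A$.

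Concretely, I would proceed as follows. If $N(B,\varepsilon)=\infty$ there is nothing to prove, so assume it is finite and fix an $\varepsilon$-covering $\mathcal{R}\subset B$ of $B$ with $|\mathcal{R}|=N(B,\varepsilon)$. Let $\mathcal{R}':=\{x\in\mathcal{R}\ |\ B(x,\varepsilon)\cap A\neq\emptyset\}$, and for each $x\in\mathcal{R}'$ pick an arbitrary point $a_x\in B(x,\varepsilon)\cap A$. Set $\mathcal{R}'':=\{a_x\ |\ x\in\mathcal{R}'\}\subset A$.

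I then check that $\mathcal{R}''$ is a $2\varepsilon$-covering of $A$. Given $y\in A$, since $A\subset B$ there exists $x\in\mathcal{R}$ with $y\in B(x,\varepsilon)$, i.e.\ $\norm{y-x}{\infty}<\varepsilon$. In particular $x\in\mathcal{R}'$, so $a_x$ is defined and satisfies $\norm{a_x-x}{\infty}<\varepsilon$. By the triangle inequality in the $\ell^\infty$-norm,
\[
\norm{a_x-y}{\infty}\leq \norm{a_x-x}{\infty}+\norm{x-y}{\infty}<2\varepsilon,
\]
hence $y\in B(a_x,2\varepsilon)$. Since $|\mathcal{R}''|\leq|\mathcal{R}'|\leq|\mathcal{R}|=N(B,\varepsilon)$, taking the infimum over $2\varepsilon$-coverings of $A$ gives $N(A,2\varepsilon)\leq N(B,\varepsilon)$.

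There is no real obstacle here: the argument is essentially the standard doubling inequality between covering numbers, adapted to the constraint that covering centers must lie in the set being covered. The only point that requires any care is verifying that each center $x$ of the original covering which actually ``sees'' a point of $A$ can be replaced by a nearby point of $A$ while only doubling the covering radius, and this is immediate from the triangle inequality in the $\ell^\infty$ metric.
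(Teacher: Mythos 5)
Your argument is correct and coincides with the paper's proof: both discard the centers whose $\varepsilon$-boxes miss $A$, replace each remaining center by a nearby point of $A$, and conclude via the triangle inequality in the $\ell^\infty$-norm (equivalently, the inclusion $B(x,\varepsilon)\subset B(a_x,2\varepsilon)$). You have also correctly identified the one point that needs care, namely that the definition of an $\varepsilon$-covering requires the centers to lie in the set being covered, which is precisely why the doubling of the radius is needed.
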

\begin{proof}
The statement is trivial if $N(B,\varepsilon)=\infty$, otherwise choose $\mathcal{R}=\{x_1,\dots,x_n\}\subset B$ a $\varepsilon$-covering of $B$. We have $A\subset B \subset \bigcup_{i=1}^n B(x_i, \varepsilon)$. Find $0\leq m\leq n$ and reorder the $x_i$'s so that $A\subset \bigcup_{i=1}^m B(x_i,\varepsilon)$ and so that $A$ intersects all $B(x_i, \varepsilon)$ for $1\leq i \leq m$. For $1\leq i \leq m$ we choose $y_i \in A\cap B(x_i,\varepsilon)$ and since $B(x_i,\varepsilon)\subset B(y_i, 2\varepsilon)$ we immediately see that $y_1,\dots,y_m$ is a $2\varepsilon$-covering of $A$, which yields the conclusion.
\end{proof}
\begin{corollary}
\label{cor:covering}
There exists a constant $C$ depending only on $d$ such that for all $A\subset B_R$ and $\varepsilon>0$ we have
$$N(A,\varepsilon)\leq C\left(\frac{R}{\varepsilon}\right)^d.$$
\end{corollary}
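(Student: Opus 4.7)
The plan is to first use the monotonicity provided by Lemma \ref{lemma:covering} to reduce the problem from covering the arbitrary subset $A$ to covering the ambient box $B_R$, and then to exhibit an explicit covering of $B_R$ by a regular grid of small boxes whose cardinality is easy to count.

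More precisely, by Lemma \ref{lemma:covering} applied to $A\subset B_R$ we have
\begin{equation*}
    N(A,\varepsilon)\leq N\!\left(B_R,\tfrac{\varepsilon}{2}\right),
\end{equation*}
so it suffices to bound $N(B_R,\varepsilon/2)$. To do this I consider the finite grid
\begin{equation*}
    \mathcal{R}:= \tfrac{\varepsilon}{2}\Z^d \cap B_R.
\end{equation*}
Every point of $B_R$ lies within $\ell^\infty$-distance $\varepsilon/2$ of a point of $\mathcal{R}$, so $B_R\subset \bigcup_{x\in \mathcal{R}} B(x,\varepsilon/2)$, i.e. $\mathcal{R}$ is an $(\varepsilon/2)$-covering of $B_R$. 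Its cardinality is bounded by
\begin{equation*}
    |\mathcal{R}|\leq \left(2\left\lfloor \tfrac{2R}{\varepsilon}\right\rfloor + 1\right)^{d}.
\end{equation*}

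When $R\geq \varepsilon$ the right-hand side is at most $(5R/\varepsilon)^d$, which gives the desired bound with $C=5^d$. When $R<\varepsilon$, the set $A\subset B_R$ is contained in the single box $B(0,\varepsilon)$, so $N(A,\varepsilon)\leq 1$, and enlarging the constant $C$ to absorb this edge case (for instance by taking $C=(5+2/\varepsilon\cdot\varepsilon)^d=7^d$, or simply bounding $|\mathcal{R}|\leq (4R/\varepsilon+3)^d\leq C(R/\varepsilon)^d$ under the standing assumption $\varepsilon\leq R$ which is the only regime in which the corollary will be used) completes the argument. There is no real obstacle here: the statement is purely combinatorial, the only minor care is in handling the boundary effects of the integer lattice and the degenerate regime $\varepsilon>R$, both absorbed in the constant $C$ which depends only on $d$.
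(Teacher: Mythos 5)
Your argument is the same as the paper's: apply Lemma \ref{lemma:covering} with $B=B_R$ to reduce to bounding $N(B_R,\varepsilon/2)$, and then cover $B_R$ by an explicit grid of $O((R/\varepsilon)^d)$ small boxes — the paper simply states this last step as obvious while you write it out. The only wrinkle is your discussion of the degenerate regime $\varepsilon > R$, where the corollary as literally stated would fail for nonempty $A$ (since $N(A,\varepsilon)\geq 1$ but $(R/\varepsilon)^d$ could be arbitrarily small and no constant $C$ can fix that); your eventual resolution, restricting to $\varepsilon\leq R$, is the right one and is the only regime in which the corollary is used, but the intermediate ``$C=7^d$'' fix does not actually work and should be dropped.
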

\begin{proof}
We apply Lemma \ref{lemma:covering} to $B=B_R$ and we note that $B_R$ can obviously be covered with $O\left((\frac{R}{\varepsilon})^d\right)$ balls of the form $B(x,\frac{\varepsilon}{2}).$
\end{proof}
We now provide the proof of Proposition \ref{prop:local}.
\begin{proof}[Proof of Proposition \ref{prop:local}.]
Fix $s\geq 10$, $x\geq 10$ and $\eta>0$.
Let $\lambda>0$  to be fixed later. Denote
$$k:=s^\eta.$$
Consider the two events:
$$\mathcal{A} := \left\{\norm{df}{\mathcal{C}^1(B_s+[-1,1]^d)}\leq k\right\},$$
$$\mathcal{B} := \left\{\forall y \in B_s,\  f(y)=-\ell \Rightarrow \norm{\nabla f(x)}{}>\lambda\right\}.$$
Applying Proposition \ref{prop:no_degeneracy}, we see that
\begin{equation}
    \label{eq:int1}
\Proba{\mathcal{A}}\geq 1-e^{-cs^{2\eta}} \quad \text{and}\quad\Proba{\mathcal{B}}\geq 1-C\lambda^{\frac{1}{\kappa_1}}s^{\kappa_2}.
\end{equation}
Set $\varepsilon = \frac{\lambda^2}{4d^{3/2}k^2}$ as in Proposition \ref{prop:implicit}.
Applying Corollary \ref{cor:covering} to the set $B_s\cap \{f=-\ell\}$ with $\frac{\varepsilon}{2}$, we can find a constant $C_d$ (depending only on $d$) and a collection of $N\leq C_d\left(\frac{s}{\varepsilon}\right)^d$ points denoted by $(x_i)_{1\leq i \leq N}$ such that for all $i$, $f(x_i)=-\ell$ and such that
$$\forall y\in B_s,\  f(y)=-\ell\Rightarrow \exists 1\leq i \leq N,\ y\in B(x_i, \frac{\varepsilon}{2}).$$
Moreover, by the triangular inequality, if $y\in B_s$ is such that $d(y,\{f=-\ell\})\leq \frac{\varepsilon}{2}$, then there exists $1\leq i \leq N$ such that $y\in B(x_i, \varepsilon)$.

Consider the set $F := \{y\in B_s\ |\ \exists z\in B_s, f(z)=-\ell\text{ and }d(y,z)\leq \frac{\varepsilon}{2}\}$ and its complement $G := B_s \setminus F$. The previous observation shows that one has
$$F\subset \bigcup_{i=1}^N B(x_i,\varepsilon).$$
Applying Corollary \ref{cor:covering}, we can also cover $G$ by less than $N$ balls of radius $\frac{\varepsilon}{2\sqrt{d}}$ and of centers $y_i$ chosen in $G$, that is
$$G \subset \bigcup_{i=1}^N B\left(y_i,\frac{\varepsilon}{2\sqrt{d}}\right).$$
Note that by construction, for each ball $B\left(y_i, \frac{\varepsilon}{2\sqrt{d}}\right)$ in the covering of $G$ this ball does not intersect $\{f=-\ell\}\cap B_s$.

Under the event $\mathcal{A}\cap \mathcal{B}$ take any two points $z_1,z_2\in B_s\cap \mathcal{E}_\ell(f)$.
We claim the following.
\begin{claim}
\label{claim:claim_cd}
There exists a deterministic constant $C_d'>0$ depending only on $d$, such that under one of the following hypotheses.
\begin{itemize}
    \item There exists $i\in \{1,\dots,N\}$ such that $z_1$ and $z_2$ are in $\overline{B(x_i, \varepsilon)}$,
    \item There exists $i\in \{1,\dots,N\}$ such that $z_1$ and $z_2$ are in $\overline{B\left(y_i, \frac{\varepsilon}{2\sqrt{d}}\right)}$ and $z_1$ is in $B\left(y_i, \frac{\varepsilon}{2\sqrt{d}}\right)$,
\end{itemize}
then there exists a rectifiable path in $\mathcal{E}_\ell(f)$ of length less than $C'_d\varepsilon$ that connects $z_1$ and $z_2$.
\end{claim}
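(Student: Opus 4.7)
The plan is to treat the two hypotheses of the claim separately, each time constructing an explicit piecewise linear path of length $O(\varepsilon)$ in $\mathcal{E}_\ell(f)$.

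Under the second hypothesis the cube $B(y_i,\varepsilon/(2\sqrt d))$ is disjoint from $\{f=-\ell\}\cap B_s$ by construction, so I take $\gamma$ to be the straight segment $[z_1,z_2]$. By convexity $\gamma$ lies in $\overline{B(y_i,\varepsilon/(2\sqrt d))}\cap B_s$, and for $t\in(0,1)$ the point $(1-t)z_1+tz_2$ lies in the open cube (since $z_1$ does), where $f\neq -\ell$ inside $B_s$. Continuity together with $f(z_1)\geq -\ell$ then forces $f>-\ell$ on the open segment and $f\geq -\ell$ on $[z_1,z_2]$, so $\gamma\subset \mathcal{E}_\ell(f)$. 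The length of $\gamma$ is at most the Euclidean diameter of a cube of half-side $\varepsilon/(2\sqrt d)$, which equals $\varepsilon$.

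Under the first hypothesis I apply Proposition \ref{prop:implicit} to $g := f+\ell$ at $x_i$: the event $\mathcal{A}\cap \mathcal{B}$ provides $g(x_i)=0$, $\norm{\nabla g(x_i)}{}>\lambda$, $\norm{dg}{\mathcal{C}^1}\leq k$, and our $\varepsilon$ coincides with the one in that proposition. This yields a coordinate direction $j$ and a $\mathcal{C}^2$ function $\phi$ such that $\{f=-\ell\}\cap \overline{B(x_i,\varepsilon)}$ is the graph $\{y_j=\phi(y_{-j})\}$; after possibly flipping $e_j$, continuity of $\partial_j f$ together with the lower bound on $\norm{\nabla g}{}$ makes $\partial_j f$ positive on the box, so $\mathcal{E}_\ell(f)\cap \overline{B(x_i,\varepsilon)}$ is exactly the closed region lying above (or missing) the graph. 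I then connect $z_1$ to $z_2$ by three segments: first move vertically in direction $e_j$ from $z_1$ up to the top face at height $(x_i)_j+\varepsilon$, then translate horizontally at that height to the column of $z_2$, then come vertically down to $z_2$. The vertical pieces stay in $\mathcal{E}_\ell(f)$ because the $(-j)$-projection is constant and the $j$-coordinate only increases past an already admissible value. The horizontal piece stays in $\mathcal{E}_\ell(f)$ because its $j$-coordinate equals the maximum $(x_i)_j+\varepsilon$ attained in the box, which dominates $\phi$ on its whole domain. Summing the lengths $2\varepsilon + 2\sqrt{d-1}\,\varepsilon + 2\varepsilon$ gives the claim with $C'_d := 4+2\sqrt d$.

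The only delicate point I foresee is the horizontal middle segment in the first case on $(-j)$-slices where $\phi$ is not defined (because the graph would leave the box). On such a slice, the whole $j$-column inside $\overline{B(x_i,\varepsilon)}$ avoids $\{f=-\ell\}$, so by continuity and $\partial_j f>0$ the column lies entirely in $\{f>-\ell\}\subset\mathcal{E}_\ell(f)$, which makes the middle segment admissible there as well.
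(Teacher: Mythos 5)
Your overall strategy mirrors the paper's: in the second case you argue by convexity and sign-constancy (correctly, and essentially as the paper does), and in the first case you apply Proposition \ref{prop:implicit}, orient $j$ so that $\partial_j f>0$ on the $\varepsilon$-cube, and connect $z_1$ to $z_2$ via the top face. The two vertical pieces are indeed safe since $f$ is nondecreasing in $y_j$ along each column. The problem is the horizontal piece, and the justification you give for it is wrong in a way that is a genuine gap, not a cosmetic one.

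You claim that $(x_i)_j+\varepsilon$ ``dominates $\phi$ on its whole domain.'' Proposition \ref{prop:implicit} gives no such bound: $\phi$ is only constrained by $\phi(x_{i,-j})=(x_i)_j$ and the Lipschitz bound $|\nabla\phi|\lesssim k/\lambda$ inherited from $|\partial_l f|\le k$ and $|\partial_j f|\gtrsim\lambda/\sqrt d$. Since $\varepsilon=\lambda^2/(4d^{3/2}k^2)$, the possible variation of $\phi$ over the $(-j)$-projection of the cube is of order $(k/\lambda)\cdot\varepsilon$, which is much larger than $\varepsilon$ when $\lambda\ll k$; so the graph of $\phi$ can exit through the top face of $\overline{B(x_i,\varepsilon)}$. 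You anticipate this in your last paragraph, but the repair is incorrect: on a column where the graph lies \emph{above} the cube, the whole column (being below the zero set and with $\partial_j f>0$) has $f<-\ell$, not $f>-\ell$. The dichotomy ``continuity plus $\partial_j f>0$ forces $f>-\ell$'' simply does not hold; a column that avoids $\{f=-\ell\}$ can lie entirely in either half-space, and the sign is determined by whether $\phi$ exits above or below the cube. Consequently the top-face segment can leave $\mathcal{E}_\ell(f)$, and the region $\{y\in\overline{B(x_i,\varepsilon)}:y_j>\phi(y_{-j})\}$ need not even be connected, so $z_1$ and $z_2$ may sit in different components of $\mathcal{E}_\ell(f)\cap\overline{B(x_i,\varepsilon)}$. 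To be fair, the paper's own proof of this claim is equally terse (``the box is separated in two\ldots travel towards one face and along it'') and does not address this point either; but your explicit construction makes the missing step visible, and the sentence meant to close it is the wrong argument.
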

\begin{proof}[Proof of Claim \ref{claim:claim_cd}]
Assume first that $z_1$ and $z_2$ are in some $\overline{B(x_i,\varepsilon)} = x_i+[-\varepsilon,\varepsilon]^d$. We apply Proposition \ref{prop:implicit} at $x_i$ and we see that the box $\overline{B(x_i,\varepsilon)}$ is separated in two by the hypersurface $\{f=-\ell\}$. Since by hypothesis we have $z_1,z_2\in \mathcal{E}_\ell(f)$ we deduce that the two points $z_1$ and $z_2$ are on the same side of this hypersurface. These two points can connect within $\mathcal{E}_\ell(f)$ first by traveling towards one face of the box and then travelling along this face of the box to connect. This connection was made of length less than $4\varepsilon+2\sqrt{d}\varepsilon.$

Now assume that $z_1$ and $z_2$ are in some $\overline{B\left(y_i, \frac{\varepsilon}{2\sqrt{d}}\right)}$. By a previous observation, the set $B\left(y_i,\frac{\varepsilon}{2\sqrt{d}}\right)\cap B_s$ does not intersects $\{f=-\ell\}$, thus by continuity of $f$ this set $B\left(y_i, \frac{\varepsilon}{2\sqrt{d}}\right)\cap B_s$ is either included in $\{f>-\ell\}$ or included in $\{f<-\ell\}$. By the assumption on $z_1$ the later case can be excluded. By taking the closure we see that $\overline{B\left(y_i, \frac{\varepsilon}{2\sqrt{d}}\right)}\subset \mathcal{E}_\ell(f)$. Now by convexity, the result follows, as we can simply join the two points $z_1$ and $z_2$ by a straight segment.
\end{proof}
We can now conclude the proof of Proposition \ref{prop:local}. Consider two points $y$ and $z$ that are connected in $B_s\cap \mathcal{E}_\ell(f)$, and denote by $\gamma : [0,1] \to B_s\cap \excur(f)$ a continuous function such that $\gamma(0)=y$ and $\gamma(1)=z$. We want to find a path between $y$ and $z$ of controlled length. To do so consider the following construction. We start from $z_0 := y$, if $z_0\in F$ we know that we have some $i$ such that $z_0 \in B(x_i, \varepsilon)$ and we define $\mathcal{B}_0 := B(x_i, \varepsilon)$ (for such an arbitrarily chosen $i$). Otherwise there exists some $i$ such that $z_0 \in B\left(y_i, \frac{\varepsilon}{2\sqrt{d}}\right)$ and we define $\mathcal{B}_0 := B\left(y_i, \frac{\varepsilon}{2\sqrt{d}}\right)$ for such a $i$ (again arbitrarily chosen). We then define $t_0 := \sup\{t>0\ |\ \gamma(t)\in \mathcal{B}_0\}$ and  $z_1=\gamma(t_0)$ and we repeat this procedure until we reach step $M$ where $t_{M-1}=1$. We have obtained a sequence $(z_k)_{0\leq k \leq M}$ (with $z_0=y$ and $z_M=z$) and boxes $(\mathcal{B}_k)_{0\leq k \leq M-1}$. Moreover, by construction, all boxes obtained in this construction are different (this also guarantees the fact that the procedure ends). Since there are at most $2N$ boxes to choose from, we have $M\leq 2N$. Furthermore, for all $k$ we have $z_k \in \mathcal{B}_k$ and $z_{k+1}\in \overline{\mathcal{B}_k}$. Additionally, we have $z_k,z_{k+1}\in B_s \cap \mathcal{E}_\ell(f)$ (this is due to the fact that the $z_k$ are part of the path $\gamma$).
We can thus apply Claim \ref{claim:claim_cd} to find a path in $\mathcal{E}_\ell(f)$ joining $z_k$ and $z_{k+1}$ of length less than $C'_d\varepsilon$. By gluing together these paths, we obtain a rectifiable path between $y$ and $z$ of length less than $2NC'_d\varepsilon.$ Since the points $y$ and $z$ were arbitrary, on the event $\mathcal{A}\cap \mathcal{B}$ we have $S(s,\mathcal{E}_\ell(f)) \leq 2NC'_d\varepsilon \leq 2C_dC'_d\frac{s^d}{\varepsilon^{d-1}}.$
By our choice of $\varepsilon$ we see that we have a constant $C''_d$ depending on $d$ such that
$$S(s,\mathcal{E}_\ell(f))\leq C''_d s^d(s^\eta)^{2(d-1)}\lambda^{-2(d-1)}.$$
Finally, we choose $$\lambda = \left(\frac{C''_ds^{d+2\eta(d-1)}}{x}\right)^{\frac{1}{2(d-1)}},$$ which yields that under the event $\mathcal{A}\cap \mathcal{B}$ we have $S(s,\excur(f))\leq x$.
Using estimates \eqref{eq:int1} and adjusting constants, we get the conclusion of Proposition \ref{prop:local}.
\end{proof}

\subsection{Proof of Theorem \ref{thm:theoreme_principal}}
We now give the proof of Theorem \ref{thm:theoreme_principal}.
\begin{proof}[Proof of Theorem \ref{thm:theoreme_principal}.]
In this proof $c,C$ denote positive constants that may change from line to line. By isotropy, it is enough to look at the chemical distance between $0\in \R^d$ and $(x,0,\dots,0)\in \R^d$ where $x>1$ is a real number (that can be thought of as big). Thus in the following, we consider some $x$ bigger than some $x_0>1$. The value of $x_0$ will be determined later in the proof. Fix also some parameter $\delta>0$, that is free for now. We introduce the following quantities
$$r=R=\log^{\frac{1+\delta}{2\beta-d}}(x) \ ,\ \ \varepsilon = \frac{1}{\log^{\frac{1+\delta}{2}}(x)}.$$
Also, consider, $\ell$ and $\ell'$ such that $$\ell>\ell'>-\ell_c.$$
We will work at the level $\ell'$ with $f_r^\varepsilon$ to then recover information about the field $f$ at level $\ell$.

We introduce our renormalization procedure in order to use Theorem \ref{prop:stoch:p2}.
Define the event $\mathcal{A}_0$ as
$$\mathcal{A}_0 := \left\{f_r^\varepsilon \in \mathcal{A}\left(R,\ell',\frac{1}{100}\right)\right\},$$
where we recall Definition \ref{def:local_uniqueness_event} of the event $\mathcal{A}(R,\ell,\kappa)$. Note that we decide to work with $\kappa=\frac{1}{100}$ simply to fix a certain value for $\kappa$, but the exact value is not crucial for the rest of the argument.
For $i\in \mathbb{Z}^d$ we define the event $\mathcal{A}_i$ as the translation of the event $\mathcal{A}_0$ of vector $\frac{R}{10}i$. We also denote by $\mathbf{B}_i$ the box $\mathbf{B}_i = \frac{R}{10}i+B_R$ and by $\tilde{\mathbf{B}}_i$ the box $\tilde{\mathbf{B}}_i=\frac{R}{10}i+B_{R(1+\frac{1}{100})}$. With these notations, note that the event $\mathcal{A}_i$ can simply be restated as the occurrence of the existence event and the uniqueness event in the box $\mathbf{B}_i$.

Consider the configuration $(\omega_i)_{i\in \Z^d}\in \{0,1\}^{\Z^d}$ such that $\forall i\in \Z^d, \ \omega_i =\mathds{1}_{\mathcal{A}_i}$, and denote by $\mu$ the probability measure on $\{0,1\}^{\Z^d}$ whose law is the law of $\omega \in \{0,1\}^{\Z^d}.$
By construction, using the fact that $f_r^\varepsilon$ is $r$-dependent, we see that $\mu$ is $100$-dependent (where $100$ is clearly not optimal but will not intervene later on). We can apply Theorem \ref{prop:stoch:p2} and find $\alpha(x, \delta)\in [0,1]$ such that
$$\pi_{1-\alpha(x,\delta)}\prec \mu.$$
Furthermore, by stationarity of the law of the field, $\mu(\omega_i=1)$ does not depend on $i$. Thus, $$\inf_{i\in \Z^d}\mu(\omega_i=1)=\mu(\omega_0=1)= \Proba{f_r^\varepsilon\in \mathcal{A}\left(R,\ell',\frac{1}{100}\right)}.$$
By Proposition \ref{prop:strong_perco} we thus obtain: 
\begin{align*}
    \inf_{i\in\Z^d}\mu(\omega_i=1) &\geq\quad  1-CR^d\left(e^{-cR}+e^{-cr^{2\beta-d}}+e^{-c\varepsilon^{-2}}\right) \\
     & \geq\quad  1-C\left(\log^{\frac{1+\delta}{2\beta-d}}(x)\right)^d\left(e^{-c\log^{\frac{1+\delta}{2\beta-d}}(x)}+e^{-c\log^{1+\delta}(x)}+e^{-c\log^{1+\delta}(x)}\right).
\end{align*}
Where in the last line we used the explicit definitions of $R$ and $r$ in terms of $x$ and $\delta$.
Note that letting $x$ goes to infinity this yields
$$\inf_{i\in\Z^d}\mu(\omega_i=1) \xrightarrow[x\to \infty]{}1.$$

By the conclusion of Theorem \ref{prop:stoch:p2}, we see that $\alpha(x,\delta)$ can be chosen so that
$$\alpha(x,\delta) \xrightarrow[x\to \infty]{}0.$$
We can thus take $x_0>1$ (depending on $d,\ell,\delta,q$) big enough so that for all $x>x_0$ we have $1-\alpha(x,\delta)>1- \alpha_d$ where $\alpha_d$ is given by Proposition \ref{prop:newman}. In the following we assume that $x$ is bigger than $x_0$.
Let $M\in \mathbb{N}^*$ such that $x\in \frac{R}{10}M+B_R.$
Recall Definition \ref{def:global_structure} of the event $\mathcal{G}(M,C,\delta)$. By Proposition \ref{prop:newman}, we can choose $C_0>0$ depending on $\delta$ and $d$ such that
\begin{equation}
    \pi_{1-\alpha_d}\left(\mathcal{G}(M,C_0,\delta)\right) \geq 1 - Ce^{-c\log^{1+\delta}(M)},
\end{equation}
for some constants $c,C>0$.
Since, $\mu$ stochastically dominates $\pi_{1-\alpha_d}$ and since the event $\mathcal{G}(M,C_0,\delta)$ is an increasing event, we get
\begin{equation}
    \mu\left(\mathcal{G}(M,C_0,\delta)\right) \geq 1 - Ce^{-c\log^{1+\delta}(M)}.
\end{equation}
Under the realization of this event, denote by $\mathcal{C}\subset \mathbb{Z}^d$ a global structure around $(0,\dots,0)$ and $(M,0\dots,0)$ with constants $C_0,\delta$. Since for all $i\in \mathcal{C}$ the event $\mathcal{A}_i$ occurs, and using the overlap of the boxes, we can glue together the "big clusters" of $\{f_r^\varepsilon \geq -\ell'\}\cap \mathbf{B}_i$ of all boxes $\mathbf{B}_i$ for $i\in \mathcal{C}$ within the union of the $\tilde{\mathbf{B}}_i$ for $i\in \mathcal{C}$ (this is due to Definition \ref{def:unique_event} of the uniqueness event with $\kappa=\frac{1}{100}$). In particular this implies that there exists a path $\gamma$ in $\{f_r^\varepsilon\geq -\ell'\}$ that stays in the $\tilde{\mathbf{B}}_i$ for $i\in \mathcal{C}$ and such that
\begin{equation}
\label{eq:e37}
    \max(d(0,\gamma),d(x,\gamma))\leq R\log^{1+\delta}(M),
\end{equation}
 where $d$ is the Euclidean distance in $\mathbb{R}^d$. Since we have $M\leq x$ and $R=\log^{\frac{1+\delta}{2\beta-d}}(x)$ we obtain that
\begin{equation}
    \label{eq:e38}
\max(d(0,\gamma),d(x,\gamma))\leq \log^{1+\delta+\frac{1+\delta}{2\beta-d}}(x).
\end{equation}
Note that since such a path is confined in at most $C_0M$ boxes $\tilde{\mathbf{B}}_i$ and since $f_r^\varepsilon$ is piece-wise constant on each small cube of size $\varepsilon$, this path $\gamma$ can be chosen of Euclidean length less than $C'M\varepsilon\frac{R^d}{\varepsilon^d}$ where $C'$ is a constant that may depend on $d$ and $C_0$. This is a very rough upper bound that we obtain by dividing each box $\tilde{\mathbf{B}}_i$ into $C'\left(\frac{R}{\varepsilon}\right)^d$ small boxes of size $\varepsilon$ on which the field $f_r^\varepsilon$ is constant, and then by choosing $\gamma$ so that if $\gamma$ visits one of this small boxes it will never visit it again.
With the additional observation that $M\leq \frac{x}{R}$, the above can be restated as follows: under the event $\mathcal{G}(M,C_0,\delta)$ there exists a continuous rectifiable path $\gamma$ taking values in $\{f_r^\varepsilon \geq -\ell'\}$ such that \eqref{eq:e38} is verified and such that
\begin{equation}
    \text{length}(\gamma)\leq C'x\log^{(1+\delta)(d-1)(\frac{1}{2}+\frac{1}{2\beta-d})}(x).
\end{equation}
Moreover, the above construction shows that $\gamma$ can be chosen included in a box of the form $B_{C''x}$ where $C''>0$ is a constant depending on $C_0,\delta, d$.

Applying Proposition \ref{prop:severo} we see that for $x$ big enough we have a constant $c>0$ such that
\begin{equation}
    \Proba{\norm{f-f_r^\varepsilon}{B_{C''x}}>\ell-\ell'} \leq e^{-c\log^{1+\delta}(x)}.
\end{equation}
Under the event $\mathcal{H}:=\left\{\norm{f-f_r^\varepsilon}{B_{C''x}}\leq \ell-\ell'\right\}$
 we observe that the path $\gamma$ is also included in $\excur(f)$ (since on this event we have $f_r^\varepsilon(z)\geq -\ell'$ implies $f(z)\geq f'(z)-(f'(z)-f(z)) \geq -\ell$ for all $z\in B_{C''x}$).
 
On the event that $(0,\dots,0)$ and $(x,0,\dots,0)$ are connected within $\excur(f)$ we argue that with high probability we can connect $(0,\dots,0)$ and $(x,0\dots,0)$ to the path $\gamma$ in a neighborhood of $(0,\dots,0)$ and $(x,0\dots,0)$. In fact, consider the event $\mathcal{U}_1$ as
\begin{equation}
    \mathcal{U}_1 := \left\{f\in \text{Unique}\left(2\log^{1+\delta+\frac{1+\delta}{2\beta -d}}(x),\ell, \frac{1}{100}\right)\right\},
\end{equation}
and define the event $\mathcal{U}_2$ as the translation of the event $\mathcal{U}_1$ by $(x,0,\dots,0)$ (recall the definition of the uniqueness event in Definition \ref{def:unique_event}). Applying Proposition \ref{prop:strong_perco} we see that for $x$ big enough we have
\begin{equation}
    \Proba{\mathcal{U}_1}=\Proba{\mathcal{U}_2}\geq 1-Ce^{-c\log^{1+\delta}(x)}.
\end{equation}
Under the event $\{0\connects x\}$ and the event $\mathcal{U}_1$ we know that $0$ connects to the path $\gamma$ within the box centered at $0$ of radius $2\left(1+\frac{1}{100}\right)\log^{1+\delta+\frac{1+\delta}{2\beta -d}}(x)$ (a similar connection happens around $x$ via the event $\mathcal{U}_2$). Now it only remains to argue that this connection can be done with a path of reasonable Euclidean length. For this, we use the estimate of Proposition \ref{prop:local}. More precisely define the event $\mathcal{S}_1$ as 
\begin{equation}
    \mathcal{S}_1 = \left\{S\left(2(1+\frac{1}{100})\log^{1+\delta+\frac{1+\delta}{2\beta -d}}(x), \excur(f)\right) \leq x\log^{(1+\delta)(d-1)(\frac{1}{2}+\frac{1}{2\beta-d})}(x)\right\},
\end{equation}
and define the event $\mathcal{S}_2$ as the translated of the event $\mathcal{S}_1$ by $(x,0,\dots,0)$ (recall Definition \ref{def:SR} of $S(s,E)$). Applying Proposition \ref{prop:local} and using stationarity we see that there exists a constant $\kappa'>0$ such that for $x$ big enough
\begin{equation}
    \Proba{\mathcal{S}_1}=\Proba{\mathcal{S}_2}\geq 1-x^{-\kappa'}.
\end{equation}
Under the event $\mathcal{S}_1$ it is known that the path between $0$ and $\gamma$ can be chosen of Euclidean length less than $x\log^{(1+\delta)(d-1)(\frac{1}{2}+\frac{1}{2\beta-d})}(x)$ (similarly for the connection between $x$ and $\gamma$ using $\mathcal{S}_2$). Hence, gluing these paths together we finally obtain a path between $0$ and $x$ in $\excur(f)$ of length less than $3x\log^{(1+\delta)(d-1)(\frac{1}{2}+\frac{1}{2\beta-d})}(x).$ Doing an union bound and adjusting constants we finally obtain that for $x$ big enough
$$\Proba{0\connects x \text{ and }\dchem(0,x)>3x\log^{(1+\delta)(d-1)(\frac{1}{2}+\frac{1}{2\beta-d})}(x)}\leq x^{-\kappa'}.$$
This is precisely the conclusion of Theorem \ref{thm:theoreme_principal}.

\end{proof}
David, Vernotte
\\\\
Institut Fourier, UMR 5582, Laboratoire de MathématiquesUniversité Grenoble Alpes, CS 40700, 38058 Grenoble cedex 9, France
\nocite{*}
\bibliography{biblio.bib}

\end{document}